\documentclass[11pt]{article}
\usepackage[margin=30truemm]{geometry}
\usepackage{tikz}
\usepackage{subcaption}
\usetikzlibrary{intersections, math, calc, arrows.meta}
\usepackage{amsmath}
\usepackage{caption}
\usepackage{mathtools}
\usepackage{amssymb}
\usepackage{amsfonts}
\usepackage{amscd}
\usepackage{graphicx}
\usepackage{comment}
\usepackage{amsthm}
\usepackage{float}
\usepackage{titlesec}
\titleformat*{\section}{\large\rmfamily\bfseries}
\titleformat*{\subsection}{\large\rmfamily\bfseries}
\titleformat*{\subsubsection}{\large\rmfamily\bfseries}

\theoremstyle{definition}
\newtheorem{dfn}{Definition}[section]
\newtheorem{thm}[dfn]{Theorem}
\newtheorem{prop}[dfn]{Proposition}

\newtheorem{cor}[dfn]{Corollary}

\newtheorem{mthm}{Main Theorem}

\theoremstyle{remark}
\newtheorem{ex}[dfn]{Example}

\newtheorem{rmk}[dfn]{Remark}

\DeclareMathOperator{\MC}{MC}
\DeclareMathOperator{\MH}{MH}
\DeclareMathOperator{\OS}{OS}
\DeclareMathOperator{\rk}{rk}

\numberwithin{equation}{section}

\begin{document}

\title{\textrm{Filtered order complexes and magnitude homology of finite graded posets}}
\author{Yoh Kitajima\thanks{Department of Mathematics, 
Graduate School of Science, 
The University of Osaka, Toyonaka560-0043, JAPAN 
E-mail:yoh20010721@gmail.com}}

\maketitle

\begin{abstract}
In this paper, we study the family of subcomplexes of the order complexes of finite graded posets, 
defined via their rank function. We address three main topics.
\upshape{(1)} We describe the general topological properties of these subcomplexes in relation to magnitude homology of graded posets.
\upshape{(2)} For posets whose order complexes are homology manifolds,
we show that the homology groups of these subcomplexes agree with those of the underlying manifold except for the top dimension, 
where the homology group is a nontrivial free abelian group. 
\upshape{(3)} For shellable graded posets, we prove that each of the subcomplexes is also shellable. 
Moreover, in the case of geometric semilattices, we show that each subcomplex is homotopy equivalent to a nontrivial wedge of spheres of the same dimension.
\end{abstract}
\tableofcontents

Unless otherwise stated, all abstract simplicial complexes and posets considered in this paper are finite.

\setcounter{section}{-1}
\section{Introduction}
Magnitude of finite metric spaces was introduced by Leinster in \cite{Lei13} as an invariant of finite metric spaces.
Hepworth and Willerton in \cite{HW17} introduced magnitude homology of undirected graphs as a categorification of graph magnitude. 
Leinster and Shulman in \cite{LS21} extended this notion to generalized metric spaces and enriched categories, a class that includes finite posets. 

In relation to magnitude homology of posets, we deal the family of subcomplexes of the order complexes of finite posets.
For a finite graded poset $P$ of rank $n$ with the rank function $r$, the subcomplex $\varDelta^{(k)}(P)$ of the order complex $\varDelta(P)$ is defined as follows:
\begin{equation}
\varDelta^{(k)}(P) =\{(x_0<\cdots <x_l)\mid r(x_l)-r(x_0)\leq k\}.
\end{equation}
The family of subcomplexes $\{\varDelta^{(k)}(P)\}_{0\leq k \leq n}$ is the special case of filtered-nerve of directed graphs introduced by Di-Ivanov-Mukoseev-Zhang in \cite{DIMZ24}, 
and through this article we call this family of subcomplexes \textbf{filtered order complex} of the poset $P$.
By the definition, $\varDelta^{(0)}(P)$ is equal to $P$ itself, 
$\varDelta^{(1)}(P)$ coincides with the Hasse diagram of $P$ regarded as a 1-dimensional simplicial complex, 
and $\varDelta^{(n)}(P)=\varDelta(P)$.
Hence, order complex $\varDelta(P)$ admits the following natural filtration:
  \begin{equation}
  P= \varDelta^{(0)}(P) \subset 
  \mathop{\varDelta^{(1)}(P)}\limits_{\substack{\rotatebox{90}{$=$}\\
  \text{\scriptsize Hasse diagram of $P$}}}
  \subset \cdots \subset \varDelta^{(n)}(P)=\varDelta(P).
\end{equation}

For a graded poset $(P,r)$, its magnitude homology can be expressed as a relative homology of a pair of subcomplexes $(\varDelta^{(k)}(P), \varDelta^{(k-1)}(P))$ (\cite[Lemma 1.7]{DIMZ24}). 
This formula plays an important role throughout this paper, and we determine the homology and homotopy types of subcomplexes $\varDelta^{(k)}(P)$ using magnitude homology, 
in order to understand magnitude homology from a geometric perspective.

Several previous studies have adopted similar approaches to investigate magnitude homology. 
For metric spaces, a similar filtration of simplicial sets was considered by Otter \cite{Ott18}, who studied its homology as \textbf{blurred magnitude homology} and described the relationship between magnitude homology and persistence homology. 
Ivanov \cite{Iva24} introduced a similar construction and stated the analogous relationship for quasi-metric spaces.
For undirected graphs, Asao-Izumihara \cite{AI21} formulated magnitude homology as a relative homology of a pair of simplicial complexes in order to apply tools of homotopy to compute magnitude homology of graphs.
Furthermore, Tajima-Yoshinaga \cite{TY24} extended this construction to metric spaces, 
and described properties of magnitude homology of metric spaces in terms of the homotopy types of the quotient spaces associated with the corresponding pair of simplicial complexes.

This paper is divided into four sections.
In Section 1, we present the necessary background on simplicial complexes and partially ordered sets.
In Section 2, we review the magnitude homology of posets and the notion of filtered order complexes, and we discuss their basic topological properties.
%The magnitude homology groups of a graded poset $(P,r)$ can be decomposed into a direct sum of the reduced homology groups of the order complexes of open intervals in $P$ (this formula was given by Ivanov \cite[Proposition 7.2]{Iva24}).
When the magnitude homology vanishes except for its diagonal components, it is called \textit{diagonal} (this notion was introduced in \cite{HW17}), and the notion of diagonality has been one of the central themes in the study of magnitude homology.
The following theorem on the filtered order complexes of diagonal posets plays an important role in the later sections of this paper:
\begin{mthm}[Theorem \ref{diagonal order complex}]
If the magnitude homology of $P$ is diagonal, then we have the following isomorphism of the homology groups of the subcomplexes:
\begin{equation}
H_{i}(\varDelta^{(k)}(P))\cong H_{i}(\varDelta(P))\quad (i\leq k-1).
\end{equation}
The rank of the top homology group satisfies the following inequality:
\begin{equation}
\rk H_{k}(\varDelta^{(k)}(P))\leq \sum_{\substack{x\leq y\\ r(y)-r(x)=k}}|\mu(x,y)|.
\end{equation}
\end{mthm}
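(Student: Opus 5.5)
The plan is to use the formula of \cite[Lemma 1.7]{DIMZ24}, which identifies magnitude homology with relative homology:
\[
\MH_{i,k}(P)\cong H_i(\varDelta^{(k)}(P),\varDelta^{(k-1)}(P)).
\]
Together with the decomposition of $\MH_{i,k}(P)$ into reduced homology of open intervals (\cite[Proposition 7.2]{Iva24}),
\[
\MH_{i,k}(P)\cong\bigoplus_{r(y)-r(x)=k}\tilde H_{i-2}(\varDelta((x,y))),
\]
this is all the input needed. Here the indexing is such that the diagonal part is $i=k$; the open interval of a rank-$k$ segment has dimension $k-2$. Diagonality then says:
\[
H_i(\varDelta^{(k)}(P),\varDelta^{(k-1)}(P))=0\quad\text{for } i\neq k.
\]

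First step: run the long exact sequence of the pair $(\varDelta^{(k)},\varDelta^{(k-1)})$. For $i\le k-2$, both $H_{i+1}$ and $H_i$ of the pair vanish, so $H_i(\varDelta^{(k-1)})\to H_i(\varDelta^{(k)})$ is an isomorphism. For $i=k-1$, the relative group $H_{k-1}$ vanishes, so $H_{k-1}(\varDelta^{(k-1)})\to H_{k-1}(\varDelta^{(k)})$ is surjective. Only this weaker statement holds at that degree, since $H_k$ of the pair may be nonzero.

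Second step: climb the filtration. For fixed $i$ and $m\ge i+2$, the inclusions $\varDelta^{(m-1)}\subset\varDelta^{(m)}$ induce isomorphisms on $H_i$. Hence
\[
H_i(\varDelta^{(i+1)}(P))\cong H_i(\varDelta^{(n)}(P))=H_i(\varDelta(P)).
\]
For any $k\ge i+1$, the same chain of isomorphisms from $\varDelta^{(k)}$ upward gives $H_i(\varDelta^{(k)})\cong H_i(\varDelta(P))$ whenever $i\le k-1$. This is the first claim. I expect the index bookkeeping (which inclusions are isomorphisms versus only surjections) to be the only delicate point, and checking $i\le k-1$ suffices.

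Third step, the rank bound. Since $\varDelta^{(k-1)}$ has dimension at most $k-1$, we have $H_k(\varDelta^{(k-1)})=0$. The exact sequence
\[
0=H_k(\varDelta^{(k-1)})\to H_k(\varDelta^{(k)})\to H_k(\varDelta^{(k)},\varDelta^{(k-1)})
\]
therefore embeds $H_k(\varDelta^{(k)})$ into $\MH_{k,k}(P)$. That group is $\bigoplus_{r(y)-r(x)=k}\tilde H_{k-2}(\varDelta((x,y)))$.

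For each such pair $(x,y)$, diagonality forces the reduced homology of the open interval $(x,y)$ to be concentrated in degree $k-2$; this is read off from the same decomposition for pairs at distance $k$. That group is free, since it is top-dimensional homology of a $(k-2)$-dimensional complex. By Philip Hall's theorem, its rank equals the absolute value of the reduced Euler characteristic, which is $|\mu(x,y)|$. The degenerate cases $k=0,1$ are handled directly: there the interval is empty, $\mu(x,y)=\pm1$, or $x=y$. Taking ranks of the embedding gives the inequality.
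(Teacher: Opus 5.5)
Your proof is correct and is essentially the paper's argument. The long exact sequences of the pairs $(\varDelta^{(m)}(P),\varDelta^{(m-1)}(P))$, whose relative homology is $\MH^{m}_{\ast}(P)$ and hence concentrated in degree $m$, give the isomorphisms for $i\leq k-1$, and $H_k(\varDelta^{(k-1)}(P))=0$ gives the embedding $H_k(\varDelta^{(k)}(P))\hookrightarrow \MH^{k}_{k}(P)$. Your interval-by-interval rank computation via Hall's theorem is a slightly more explicit version of the paper's appeal to Corollary \ref{diagonal}: it supplies the sign uniformity $(-1)^k\mu(x,y)\geq 0$ needed to pass from $|\sum\mu(x,y)|$ to $\sum|\mu(x,y)|$.
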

In Section 3, we study the properties of filtered order complexes of a poset $P$ under the assumption that $\varDelta(P)$ gives a simplicial subdivision of manifolds.
Kaneta-Yoshinaga used the undirected Hasse diagrams of face posets of triangulated manifolds with a minimum and a maximum element adjoined, to create examples of undirected graphs whose magnitude homology contains torsions in (\cite[Corollary 5.12]{KY21}).
Ivanov and Mukoseev investigated the magnitude homology of such posets in \cite{IM24}.
We establish analogous results for posets whose order complexes provide triangulations of homology manifolds, without adjoining a maximum or a minimum element, 
and show that the magnitude homology of such posets are diagonal and torsion-free. 
The proof is essentially the same strategy as that of \cite[Theorem 7.8]{IM24}.
Using this fact, we prove the following theorem about the filtered order complex of such posets.
\begin{mthm}[Theorem \ref{closed manifold}]
If $\varDelta(P)$ provides a simplicial subdivision of an $n$-dimensional homology manifold $X$, 
it holds that $H_{i}(\varDelta^{(k)}(P))\cong H_{i}(X) (i\leq k-1)$, 
and $H_{k}(\varDelta^{(k)}(P);\mathbb{Z}) (k\leq n-1)$ is a nontrivial free abelian group.
\end{mthm}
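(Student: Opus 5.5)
The plan is to reduce the statement to Main Theorem~\ref{diagonal order complex} by first showing that the magnitude homology of $P$ is diagonal and torsion-free whenever $\varDelta(P)$ triangulates an $n$-dimensional homology manifold $X$. We use the relative description $\MH_{k,l}(P)\cong H_l(\varDelta^{(k)}(P),\varDelta^{(k-1)}(P))$ from \cite[Lemma 1.7]{DIMZ24}. We also use the decomposition of magnitude homology into reduced homology of open intervals: the relative chain complex splits over pairs $x<y$ with $r(y)-r(x)=k$, and the summand for $(x,y)$ is the chain complex of chains with minimum $x$ and maximum $y$. This summand is the suspension-shifted reduced chain complex of $\varDelta((x,y))$, so that
\[
H_l(\varDelta^{(k)},\varDelta^{(k-1)})\cong\bigoplus_{r(y)-r(x)=k}\tilde H_{l-2}(\varDelta((x,y))).
\]

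\textbf{Step 1: local structure of intervals.} For $x<y$, the open interval $(x,y)$ has order complex equal to the link in $\varDelta(P)$ of the simplex $\{x,y\}$ intersected with the part between $x$ and $y$. More precisely, $\mathrm{lk}_{\varDelta(P)}(\{x,y\}) = \varDelta(P_{<x}) * \varDelta((x,y)) * \varDelta(P_{>y})$. In a homology $n$-manifold (with boundary, if we allow it), the link of a $1$-simplex is a homology $(n-2)$-sphere or ball. Gradedness of $P$ with rank $n$, together with purity of $\varDelta(P)$, gives that $\varDelta(P_{<x})$ has dimension $r(x)-1$, $\varDelta((x,y))$ has dimension $r(y)-r(x)-2$, and $\varDelta(P_{>y})$ has dimension $n-r(y)-1$. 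A join is a homology sphere only if each factor is a homology sphere of the correct dimension, by the Künneth formula for joins. Hence $\tilde H_*(\varDelta((x,y)))$ is $\mathbb Z$ or $0$, concentrated in degree $r(y)-r(x)-2$. This gives diagonality and torsion-freeness, following the strategy of \cite[Theorem 7.8]{IM24}. The main obstacle is the ball (boundary) case, which needs the analogous statement that a join is acyclic or a homology sphere. I will treat only closed $X$ if the paper's definition allows, and otherwise argue that the factors are homology spheres or acyclic.

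\textbf{Step 2: homology below the top.} Main Theorem~\ref{diagonal order complex} immediately gives $H_i(\varDelta^{(k)}(P))\cong H_i(\varDelta(P))\cong H_i(X)$ for $i\le k-1$.

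\textbf{Step 3: the top group $H_k(\varDelta^{(k)}(P))$ for $k\le n-1$.} Since $\varDelta^{(k)}(P)$ is $k$-dimensional (a chain of rank span $k$ has at most $k+1$ elements), $H_k$ is a group of cycles and hence free abelian. For nontriviality, pick a maximal chain $x_0<\dots<x_n$ of $P$ and consider the interval $[x_0,x_{k+1}]$. Its closed order complex, restricted to chains of span at most $k$, is the boundary of the cone: it is the suspension of $\varDelta((x_0,x_{k+1}))$ minus its top cells, i.e.\ $\{x_0,x_{k+1}\}$ joined with the homology $(k-1)$-sphere $\varDelta((x_0,x_{k+1}))$, with the whole-interval chains removed. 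Concretely, the cycle $\partial$ of the fundamental relative class $x_0 * x_{k+1} * [\varDelta((x_0,x_{k+1}))]$, which is a generator of $\MH_{k+1,k+1}\neq0$, is nonzero in $H_k(\varDelta^{(k)})$. To see this, use the long exact sequence of the pair $(\varDelta^{(k+1)},\varDelta^{(k)})$. By diagonality, $H_{k+1}(\varDelta^{(k+1)},\varDelta^{(k)})\to H_k(\varDelta^{(k)})$ has kernel equal to the image of $H_{k+1}(\varDelta^{(k+1)})$. I would compare ranks: $H_{k+1}(\varDelta^{(k+1)})$ maps into the $(k+1)$-cycles, and the relative group is $\bigoplus\mathbb Z$ over all spans-$(k+1)$ intervals. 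A nonzero cycle supported on a single interval-suspension, when viewed in $\varDelta^{(k+1)}$ via $\varDelta^{(k+1)}\subset\varDelta$, would be a $(k+1)$-cycle that is a small sphere inside a manifold of dimension $n>k$. The cleanest way to finish is the following: if the boundary map were zero, then every such relative class would lift to an absolute $(k+1)$-cycle of $\varDelta^{(k+1)}$ whose support uses exactly one span-$(k+1)$ interval. Such a cycle is the suspension sphere $S$ of $\varDelta((x_0,x_{k+1}))$, and $S$ is not a cycle because its span-$\le k$ part has boundary. I expect this last verification to be the main technical point.
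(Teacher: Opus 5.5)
Your Steps 1 and 2 follow the paper. Open intervals are homology spheres, so $P$ is diagonal and torsion-free, and Theorem~\ref{diagonal order complex} then gives $H_i(\varDelta^{(k)}(P))\cong H_i(X)$ for $i\le k-1$. Two remarks on Step 1. The paper's homology manifolds are without boundary, so the ball case does not arise. The paper also avoids your join-factor argument, which over $\mathbb{Z}$ needs some care because of Tor terms. Instead it takes the link of the simplex formed by $x$, $y$, a maximal chain below $x$ and a maximal chain above $y$; that link is exactly $\varDelta((x,y))$.

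In Step 3 you use a different witness, and the approach works, but your verification goes astray.

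\textbf{Comparison with the paper.} The paper takes an $(n-k-1)$-simplex whose vertices have ranks outside a window $[i,i+k]$. Its link is a homology $k$-sphere lying in $\varDelta(P_{[i,i+k]})\subseteq\varDelta^{(k)}(P)$, one for each window. You take $\Sigma=\partial(x_0*c*x_{k+1})$, the suspension of the homology $(k-1)$-sphere $\varDelta((x_0,x_{k+1}))$. This is the image of a generator of $\MH^{k+1}_{k+1}(P)$ under the connecting map. It is a good choice and ties the top class directly to magnitude homology.

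\textbf{Where the argument goes wrong.} The rank comparison is unnecessary. Your last sentence is false as written: the suspension sphere \emph{is} a cycle. What fails to be a cycle is the $(k+1)$-chain $x_0*c*x_{k+1}$.

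\textbf{The fix.} What you call the main technical point follows at once from a fact you already stated. Since $\varDelta^{(k)}(P)$ has no $(k+1)$-simplices, $H_k(\varDelta^{(k)}(P))$ is just the group of $k$-cycles. So it suffices that $\Sigma$ is a nonzero $k$-cycle lying in $\varDelta^{(k)}(P)$.
\begin{itemize}
\item Every simplex of $x_0*c$ has ranks $0,\dots,k$, and every simplex of $c*x_{k+1}$ has ranks $1,\dots,k+1$. Hence $\Sigma$ lies in $\varDelta^{(k)}(P)$.
\item $\Sigma\neq 0$, because its simplices containing $x_0$ form $\pm x_0*c$ with $c\neq0$.
\end{itemize}
If you prefer your exact-sequence phrasing, note that $C_{k+1}(\varDelta^{(k)}(P))=0$ and $C_{k+2}(\varDelta^{(k+1)}(P))=0$. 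So the only possible lift of the relative class is $x_0*c*x_{k+1}$ itself, which is not a cycle. With this replacement the proof is complete.
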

As a corollary of this theorem, we prove that for a poset whose order complex provides a simplicial subdivision of a closed manifold, 
each subcomplexes are not contractible.

In Section 4, we consider the case where the posets are shellable.
We show that when a graded poset $P$ is shellable, each subcomplex $\varDelta^{(k)}(P)$ is also shellable (Theorem \ref{shellable}). 
Moreover, we show the following theorem, which is a generalization of a result about the homotopy types of Falkman complexes of hyperplane arrangements, stated by Quillen in \cite{Qui78}.
\begin{mthm}[Theorem \ref{geometric semilattice}]
For a geometric semilattice $P$ with its rank more than $2$, 
each subcomplex $\varDelta^{(k)}(P)$ is homotopy equivalent to a nontrivial wedge of $k$-dimensional spheres.
\end{mthm}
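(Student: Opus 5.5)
The plan is to combine three ingredients: purity of $\varDelta^{(k)}(P)$, shellability via Theorem \ref{shellable}, and an explicit nonzero top-dimensional cycle. Throughout, $P$ is a geometric semilattice of rank $n$ and $1\leq k\leq n-1$. The case $k=n$ is excluded, since $P$ has a minimum $\hat 0$ and $\varDelta^{(n)}(P)=\varDelta(P)$ is a cone, hence contractible.

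\textbf{Step 1: purity.} I will show that $\varDelta^{(k)}(P)$ is pure of dimension $k$. Let $x_0<\cdots<x_l$ be a chain with $r(x_l)-r(x_0)\leq k$.
\begin{itemize}
\item First refine it to a saturated chain from $x_0$ to $x_l$; this is possible because $P$ is graded.
\item If $r(x_l)-r(x_0)<k$, extend the chain downward toward $\hat 0$ or upward along covers until the rank difference is exactly $k$. This is possible because $P$ has rank $n\geq k$: every element lies on a maximal chain of length $n$, since geometric semilattices are pure.
\end{itemize}
A saturated chain whose rank difference is $k$ has $k+1$ elements, so every face lies in a $k$-simplex.

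\textbf{Step 2: shellability and the wedge decomposition.} Geometric semilattices are shellable (Wachs--Walker), so Theorem \ref{shellable} shows that $\varDelta^{(k)}(P)$ is shellable. A pure $k$-dimensional shellable complex is homotopy equivalent to a wedge of $k$-spheres. The number of spheres equals $\rk H_k(\varDelta^{(k)}(P))$; it could in principle be zero, in which case the complex is contractible.

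\textbf{Step 3: nontriviality.} This is the only real content beyond Theorem \ref{shellable}, and the main point to get right. Because $\varDelta^{(k)}(P)$ is $k$-dimensional, there are no $(k+1)$-boundaries. Hence $H_k(\varDelta^{(k)}(P))$ equals the group of $k$-cycles, and it suffices to exhibit one nonzero simplicial $k$-cycle.

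Since $k+1\leq n$, choose $x<y$ in $P$ with $r(y)-r(x)=k+1$; for example, take $x=\hat 0$ and $y$ of rank $k+1$. Consider the subcomplex $S$ of $\varDelta([x,y])$ consisting of chains that do not contain both $x$ and $y$. Every such chain has rank difference at most $k$, so $S\subset\varDelta^{(k)}(P)$. Moreover $S$ is the union of the cones $x*\varDelta((x,y))$ and $y*\varDelta((x,y))$, i.e.\ the suspension of $\varDelta((x,y))$.

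The interval $[x,y]$ is a geometric lattice, so $\varDelta((x,y))$ is a wedge of $|\mu(x,y)|\neq 0$ spheres of dimension $k-1$. Therefore $S$ is a wedge of $k$-spheres with $H_k(S)\neq 0$. A nonzero $k$-cycle of $S$ is a nonzero $k$-cycle of $\varDelta^{(k)}(P)$, and by the remark above it is nonzero in $H_k(\varDelta^{(k)}(P))$.

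\textbf{Cross-check and potential difficulties.} As a consistency check, the diagonality of magnitude homology (intervals of geometric semilattices are Cohen--Macaulay) together with Theorem \ref{diagonal order complex} gives $H_i(\varDelta^{(k)}(P))\cong H_i(\varDelta(P))=0$ for $i\leq k-1$. This agrees with the wedge description, but it is not needed once shellability is available.

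The main obstacle I anticipate is verifying that the shellability of $P$ is in exactly the form Theorem \ref{shellable} requires. If Theorem \ref{shellable} needs shellability of $\varDelta(P)$ itself rather than of $\hat P$, I would use that the lexicographic (EL) shelling of the geometric semilattice restricts correctly to $P$, following Wachs--Walker. I would also double-check the role of the hypothesis that the rank exceeds $2$, which presumably only excludes degenerate small cases.
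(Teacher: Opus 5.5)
Your proof is correct and has the same structure as the paper's: the wedge decomposition comes from Theorem \ref{shellable} together with Wachs--Walker shellability, and nontriviality comes from a nonzero top-dimensional cycle built from an interval with $\mu(x,y)\neq 0$, which survives in $H_k(\varDelta^{(k)}(P))$ because $\varDelta^{(k)}(P)$ has no $(k+1)$-simplices. The only difference is the choice of cycle. You use the suspension $\varDelta^{(k)}([x,y])$ of $\varDelta((x,y))$ with $r(y)-r(x)=k+1$ for every $k$, which is exactly the paper's device for $k=n-1$; for $k\le n-2$ the paper instead uses open intervals $\varDelta((x,y))$ with $r(y)-r(x)=k+2$, since it is also proving a lower bound on $\beta_k$. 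Your version therefore treats all $k$ uniformly, and the case $k=0$, which you skip, is trivial because $|P|\geq 2$.
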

Also, we consider the relationship between the magnitude homology of a central complex arrangement 
and the cohomology of its complement. We show that the rank of the magnitude homology of the intersection lattice of a complex central arrangement is equal to the sum of the Betti number of the complement of restrictions of the arrangement (Theorem \ref{complement}).

\section{Preliminaries}
In this section we present some background on abstract simplicial complexes and order complexes of posets.
\subsection{Abstract simplicial complex}
\begin{dfn}[\cite{Koz08}, Definition 2.1]
A finite \textbf{abstract simplicial complex} is a finite set $A$ together with a collection $\varDelta$ of subsets of $A$ 
such that if $X\in \varDelta$ and $Y\subseteq X$, then $Y\in \varDelta$.
The element $v\in A$ such that $\{v\}\in \varDelta$ is called the \textit{vertex} of $\varDelta$, 
and each $X\in \varDelta$ is called the \textit{simplex} of $\varDelta$.
\end{dfn}

\begin{dfn}[\cite{Koz08}, Definition 2.12, Definition 2.13, Definition 2.14]
Let $\varDelta$ be an abstract simplicial complex, and let $\tau$ be a simplex of $\varDelta$. 
The \textbf{deletion}, \textbf{link}, \textbf{star} of $\tau$ is the abstract simplicial subcomplexes of $\varDelta$, 
denoted by $\mathrm{dl}_{\varDelta}(\tau)$ defined by
\begin{gather}
\mathrm{dl}_{\varDelta}(\tau):=\{\sigma\in \varDelta \mid \sigma \nsupseteq \tau\}\\
\mathrm{lk}_{\varDelta}(\tau):=\{\sigma\in \varDelta \mid \sigma \cap \tau = \emptyset, \sigma\cup \tau \in \varDelta\}\\
\mathrm{star}_{\varDelta}(\tau):=\{\sigma\in \varDelta \mid \sigma\cup \tau \in \varDelta\}.
\end{gather}
Also, for two abstract simplicial complexes $\varDelta_1$ and $\varDelta_2$, 
the join of $\varDelta_1$ and $\varDelta_2$ is the abstract simplicial complex $\varDelta_1 \ast \varDelta_2$ 
with the set of vertices $V(\varDelta_1)\cup V(\varDelta_2)$, and the set of simplices
\begin{equation}
\varDelta_1 \ast \varDelta_2:=\{\sigma \in V(\varDelta_1)\cup V(\varDelta_2) \mid \sigma \cap V(\varDelta_1) \in \varDelta_1,\ \sigma \cap V(\varDelta_2) \in \varDelta_2\}. 
\end{equation}
\end{dfn}

\begin{dfn}[\cite{Koz08}]
A \textbf{matroid} $M$ is an abstract simplicial complex $\mathcal{I}$ with its set of vertices $V$ such that the following property is satisfied: 
if $\sigma, \tau \in \mathcal{I}$, and $|\sigma|>|\tau|$, then there exists $x\in \sigma \setminus \tau$ such that 
$\tau\cup \{x\} \in \mathcal{I}$. The elements of $\mathcal{I}$ are called \textbf{independent sets}.
\end{dfn}

\subsection{Partially ordered sets (posets)}
\begin{dfn}[\cite{Koz08}, Definition 2.18]
A \textbf{partially ordered set}, or simply \textbf{poset}, 
$P$ is a set together with the relation $\leq$ that satisfies the following three axioms:
\begin{itemize}
\item \textit{idenpotency}: for any $x\in P$, $x\leq x$;
\item \textit{antisymmetry}: for any $x,y\in P$, if $x\leq y$ and $y\leq x$, then $x=y$;
\item \textit{transivity}: for any $x,y,z\in P$, if $x\leq y$ and $y \leq z$, then $x\leq z$;
\end{itemize}
If $x<y\in P$ and no element $u\in P$ satisfies $x<u<y$, then we say that $y$ \textbf{covers} $x$, 
denoted $x \prec y$.
\end{dfn}

\begin{dfn}[\cite{Koz08}, Definition 9.3]
Let $P$ be a finite poset. The \textbf{order complex} of $P$ is an abstract simplicial complex $\varDelta(P)$ 
whose vertices are all elements of $P$ and whose simplices are all chains of $P$. 
\end{dfn}

\begin{rmk}
If a poset $P$ has either minimum element $\hat{0}$ or maximam element $\hat{1}$, then $\varDelta(P)$ is a cone with the apex $\hat{0}$(or $\hat{1}$), hence it is contractible. 
In this case, we often consider the order complex of the \textit{proper part} of $P$, namely $\bar{P}:=P-\{\hat{0},\hat{1}\}$.
\end{rmk}

\begin{dfn}[\cite{Koz08}, Definition 10.8]
Let $P$ and $Q$ be posets. \textbf{Ordinal sum} of $P$ and $Q$ is the poset $P\oplus Q$ 
whose set of vertices is $P \sqcup Q$ and whose order relation is given by
\begin{equation}
x\leq_{P\oplus Q} y \Longleftrightarrow
\left\{
\begin{array}{lll}
\text{either} & x,y\in P, & x\leq_{P} y;\\
\text{or} & x,y\in Q, & x\leq_{Q} y;\\
\text{or} & x\in P, & y\in Q.
\end{array}
\right.
\end{equation}
\end{dfn}

\begin{prop}[\cite{Koz08}, p.157]
For arbitrary two posets $P$ and $Q$, we have the following isomorphism of abstract simplicial complexes;
\begin{equation}
\varDelta(P\oplus Q)=\varDelta(P)\ast \varDelta(Q).
\end{equation}
\end{prop}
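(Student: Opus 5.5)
The plan is to verify directly that the two abstract simplicial complexes $\varDelta(P\oplus Q)$ and $\varDelta(P)\ast\varDelta(Q)$ have the same vertex set and the same simplices. No homotopy-theoretic input is needed; everything reduces to unwinding the definition of the order in $P\oplus Q$.

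For the vertices: the vertex set of $\varDelta(P\oplus Q)$ is the underlying set $P\sqcup Q$. The vertex set of the join is $V(\varDelta(P))\cup V(\varDelta(Q))=P\sqcup Q$, where we regard $P$ and $Q$ as disjoint, as the ordinal sum requires.

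For the simplices, take a subset $\sigma\subseteq P\sqcup Q$ and write $\sigma_P=\sigma\cap P$ and $\sigma_Q=\sigma\cap Q$.
\begin{itemize}
\item Suppose first that $\sigma$ is a chain in $P\oplus Q$. The order on $P\oplus Q$ restricts to $\leq_P$ on $P$ and to $\leq_Q$ on $Q$. Hence $\sigma_P$ is a chain of $P$ and $\sigma_Q$ is a chain of $Q$, so $\sigma\in\varDelta(P)\ast\varDelta(Q)$.
\item Conversely, suppose $\sigma_P$ is a chain in $P$ and $\sigma_Q$ is a chain in $Q$. Take any two elements $x,y\in\sigma$. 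If both lie in $\sigma_P$, they are comparable by the first clause of the definition. If both lie in $\sigma_Q$, they are comparable by the second clause. If one lies in $P$ and the other in $Q$, the one in $P$ is below the other by the third clause. So $\sigma$ is totally ordered, i.e.\ a chain in $P\oplus Q$.
\end{itemize}
The empty set and the degenerate cases $\sigma_P=\emptyset$ or $\sigma_Q=\emptyset$ are covered automatically, since the empty set is a simplex of every complex.

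There is no real obstacle here. The only point needing care is that the join's definition presupposes disjoint vertex sets, which is exactly how $P\sqcup Q$ enters the ordinal sum. One should also note that $\sigma$ above ranges over subsets of the vertex set, since the displayed definition writes "$\sigma\in V(\varDelta_1)\cup V(\varDelta_2)$" where "$\subseteq$" is meant.
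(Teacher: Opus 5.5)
Your proof is correct, and it is essentially the paper's route: the paper gives no argument beyond citing Kozlov, where the identity is likewise read off directly from the definitions. Your split of $\sigma\subseteq P\sqcup Q$ into $\sigma\cap P$ and $\sigma\cap Q$, checked against the three clauses of the ordinal-sum order, is that standard verification.
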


\begin{dfn}[\cite{Sta11}]
Let $P$ be a finite poset. If every maximal chain of $P$ has the same length $n$, 
then we say that $P$ is \textbf{pure}, or \textbf{graded of rank $n$}. 
In this case there is a unique rank function $r:P\rightarrow \mathbb{Z}_{\geq 0}$ such that 
$r(x)=0$ if $x$ is a minimal element of $P$, and $r(y)=r(x)+1$ if $x\prec y$. 
If $r(x)=i$, then we say that $x$ has \textit{rank\ $i$}.
\end{dfn}

\begin{dfn}[\cite{Sta11}]
Let $P$ be a finite graded poset of rank $n$, with a rank function $r:P\rightarrow \{0,\ldots, n\}$. 
If $S\subseteq \{0,\ldots, n\}$ then define the subposet
\begin{equation}
P_{S}=\{x\in P \mid r(x)\in S\},
\end{equation}
called the subposet \textbf{$S$-rank-selected subposet} of $P$.
\end{dfn}

\begin{dfn}[\cite{Koz08}]
For an arbitrary poset $P$, define the function $\mu:P \times P \rightarrow \mathbb{Z}$ as follows:
\begin{itemize}
\item $\mu(x,x)=1$,\ for all $x\in P$;
\item $\mu(x,y)=-\sum_{x\leq z < y}\mu(x,z)$, for all $x<y \in P$;
\item $\mu(x,y)=0$, (otherwise).
\end{itemize}
The function $\mu$ is called the \textbf{M\"{o}bius function} of $P$.
\end{dfn}

\begin{prop}[\cite{Koz08}, Theorem 10.24]\label{Mobius Euler}
For any finite poset $P$ with a maximal and minimal elements, we have
\begin{equation}
\mu(\hat{0},\hat{1})=\tilde{\chi}(\varDelta(\bar{P})),
\end{equation}
where $\tilde{\chi}$ denotes the reduced Euler characteristic (Euler characteristic minus one).
\end{prop}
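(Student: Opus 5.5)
The plan is to expand both sides of the identity through the recursive definition of $\mu$ and compare them with the chain counts that compute $\tilde{\chi}(\varDelta(\bar{P}))$. This is Philip Hall's theorem.

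First, write the reduced Euler characteristic as an alternating count of chains. The simplices of $\varDelta(\bar{P})$ are the chains $x_1<\cdots<x_l$ in $\bar{P}$, and we include the empty chain in dimension $-1$. Let $c_l$ denote the number of chains $\hat{0}=y_0<y_1<\cdots<y_l=\hat{1}$ in $P$. Such a chain corresponds to a chain of length $l-1$ (that is, an $(l-2)$-simplex) in $\bar{P}$; for $l=1$ this is the empty simplex. Hence
\begin{equation}
\tilde{\chi}(\varDelta(\bar{P}))=\sum_{l\geq 1}(-1)^{l-2}c_l=\sum_{l\geq 1}(-1)^{l}c_l .
\end{equation}

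Second, show that $\mu(x,y)=\sum_{l\geq 0}(-1)^l c_l(x,y)$ for all $x\leq y$. Here $c_l(x,y)$ is the number of chains $x=y_0<\cdots<y_l=y$. We induct on the length of the interval $[x,y]$. For $x=y$ only $l=0$ contributes, and the sum is $1$. For $x<y$, every chain from $x$ to $y$ of length $l\geq 1$ splits uniquely into a chain from $x$ to some $z<y$ of length $l-1$, followed by the step $z<y$. This gives $c_l(x,y)=\sum_{x\leq z<y}c_{l-1}(x,z)$, and therefore
\begin{equation}
\sum_{l\geq 1}(-1)^l c_l(x,y)=-\sum_{x\leq z<y}\sum_{l\geq 0}(-1)^{l}c_{l}(x,z)=-\sum_{x\leq z<y}\mu(x,z),
\end{equation}
using the induction hypothesis on the intervals $[x,z]$. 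Since $c_0(x,y)=0$ when $x<y$, the left-hand side is the full sum, and the right-hand side is $\mu(x,y)$ by definition.

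Applying the second step with $x=\hat{0}$ and $y=\hat{1}$ (assuming $\hat{0}\neq\hat{1}$) and comparing with the first step gives the result. The only delicate point is the index bookkeeping: a chain of length $l$ from $\hat{0}$ to $\hat{1}$ is an $(l-2)$-simplex of $\varDelta(\bar P)$, so its sign is $(-1)^{l-2}=(-1)^l$. The empty simplex must be counted to make the $l=1$ term match the reduced Euler characteristic. The main step is the recursion in the second paragraph; the rest is routine.
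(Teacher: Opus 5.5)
Your proof is correct. The paper gives no argument of its own here and simply cites Kozlov; what you wrote is the standard proof of Hall's theorem, and your induction is the entrywise form of the incidence-algebra identity $\mu=\zeta^{-1}=\sum_{l\geq 0}(-1)^l(\zeta-\delta)^l$ (with $\zeta(x,y)=1$ for $x\leq y$ and $\delta$ the identity), since $(\zeta-\delta)^l(x,y)=c_l(x,y)$.

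Two minor remarks. First, a chain $y_1<\cdots<y_{l-1}$ in $\bar{P}$ has $l-1$ elements, i.e.\ length $l-2$; your dimension $l-2$ is right, and only the word ``length'' is off. Second, your assumption $\hat{0}\neq\hat{1}$ is genuinely needed, since for a one-point poset $\mu(\hat{0},\hat{0})=1$ whereas $\tilde{\chi}(\varDelta(\emptyset))=-1$.
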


\begin{cor}
Let $\mu$ be the M\"{o}bius function of $P$.
For elements $x\leq y$, the following equation holds:
\begin{equation}
\mu(x,y)=\chi(\varDelta(x,y)),
\end{equation}
where $\varDelta(x,y)$ denotes the order complex of the open interval $(x,y):=\{z\mid x<z<y\}$.
\end{cor}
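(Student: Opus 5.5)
The plan is to deduce the statement directly from Proposition~\ref{Mobius Euler} by applying it to the closed interval $[x,y]$. Throughout, $\chi$ in the statement should be read as the reduced Euler characteristic $\tilde{\chi}$, as in Proposition~\ref{Mobius Euler}. The main case is $x<y$.

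\emph{Step 1.} I would check that the Möbius function is local. The recursive definition of $\mu(x,y)$ only involves values $\mu(x,z)$ with $x\le z<y$, that is, elements of $[x,y]$. An induction on the length of the longest chain from $x$ to $y$ then shows that $\mu_P(x,y)=\mu_{[x,y]}(x,y)$, where $\mu_{[x,y]}$ is the Möbius function of the subposet $[x,y]=\{z\mid x\le z\le y\}$.

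\emph{Step 2.} The finite poset $[x,y]$ has minimum $\hat 0=x$ and maximum $\hat 1=y$. Its proper part is exactly the open interval $(x,y)$, so $\varDelta(\overline{[x,y]})=\varDelta(x,y)$. Proposition~\ref{Mobius Euler} applied to $[x,y]$ then gives
\[
\mu(x,y)=\mu_{[x,y]}(\hat 0,\hat 1)=\tilde{\chi}(\varDelta(x,y)).
\]

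\emph{Step 3.} I would record the degenerate situations so the conventions are consistent. If $x\prec y$, then $(x,y)=\emptyset$. With the standard convention $\tilde\chi(\emptyset)=-1$, which counts the empty simplex in dimension $-1$, this matches $\mu(x,y)=-\mu(x,x)=-1$. The case $x=y$ is not covered by this argument, since it would give $-1\neq 1=\mu(x,x)$; the identity should be understood for $x<y$, with $\mu(x,x)=1$ holding by definition.

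The only point needing care is this bookkeeping of the empty-complex convention and the locality of $\mu$ in Step 1; the rest is an immediate specialization of Proposition~\ref{Mobius Euler}.
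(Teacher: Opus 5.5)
Your proposal is correct and follows the same route as the paper, which gives no separate proof and treats the statement as Proposition~\ref{Mobius Euler} applied to the bounded poset $[x,y]$, whose proper part is $(x,y)$. Your two corrections are also right: the printed $\chi$ must be read as the reduced Euler characteristic $\tilde{\chi}$ (already for $x\prec y$ one has $\mu(x,y)=-1$ while $\chi(\emptyset)=0$), and the identity holds only for $x<y$.
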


\section{Magnitude homology and filtered order complexes}
\subsection{Magnitude homology of quasi-metric spaces}
\begin{dfn}[\cite{Asao23B}]
A \textit{Generalized metric space} $(X,d)$ is a set $X$ with a map $d:X\times X\rightarrow \mathbb{R}_{\geq 0}\cup \{\infty\}$ that satisfy the following conditions;
\begin{itemize}
\item $d(x,x)=0\ (\forall x\in X)$,
\item $d(x,y)+d(y,z) \geq d(x,z)\ (\forall x,y,z\in X)$.
\end{itemize}
A generalized metric space $(X,d)$ is \textit{symmetric} if it satisfies that $d(x,y)=d(y,x)$ for any $x,y\in X$,
and is \textit{non-degenerate} if $d(x,y)=d(y,x)=0$ implies that $x=y$ for any $x,y\in X$.
A generalized metric space $(X,d)$ is called \textit{quasi-metric space} if it is symmetric and non-degenerate.
\end{dfn}

\begin{ex}
Let $(P,r)$ be a finite graded poset with a rank function $r:P\rightarrow \mathbb{Z}_{\geq 0}$.
We metrize $(P,r)$ by setting the quasi-metric function $d:P\times P\rightarrow \mathbb{R}_{\geq 0}\cup \{\infty\}$ as follows:
\begin{align}\label{q-metric}
d(x,y)=\begin{cases}
r(y)-r(x) & (x\leq y),\\
\infty & (\mathrm{otherwise}).
\end{cases}
\end{align}
\end{ex}

\begin{dfn}[magnitude homology of quasi-metric spaces]
Let $(X,d)$ be a quasi-metric space.
For a tuple $(x_0,\ldots,x_n)\in X^{n+1}$, its length $L(x_0,\ldots,x_n)$ is defined by 
\begin{equation}
L(x_0,\ldots,x_n):=\sum_{i=0}^{n-1}d(x_i,x_{i+1}).
\end{equation}
For $(X,d)$ and $k\in \mathbb{R}_{\geq 0}$, the chain complex $(MC^{k}_{\ast}(X),\partial_{\ast})$ is defined as follows:
\begin{equation}
\MC_{n}^{k}(X):=\mathbb{Z}\langle 
  (x_0,x_1,\ldots,x_n)\in X^{n+1} \mid x_i\neq x_{i+1},\ L(x_0,\ldots,x_n)=k
\rangle.
\end{equation}
The boundary map $\partial^{k,n}:\MC_{n}^{k}(X)\rightarrow \MC_{n-1}^{k}(X)$ is defined by
$\partial^{k,n}:=\sum_{i=0}^{n}(-1)^{i}\partial^{k,n}_{i}$, where
\begin{equation}
\partial^{k,n}_{i}(x_0,\ldots,x_n):=\begin{cases}
    (x_0,\ldots,x_{i-1},x_{i+1},\ldots,x_n) & (L(x_0,\ldots,x_{i-1},x_{i+1},\ldots,x_n)=k)\\
    0 & (\mathrm{otherwise}).
  \end{cases}
\end{equation}
The chain complex $(\MC^{k}_{\ast}(X),\partial^{k}_{\ast})$ is called the \textit{magnitude chain complex} of $(X,d)$ in \textit{grading $k$}, 
and the \textit{magnitude homology group} of $(X,d)$ in grading $k$ is the homology $\MH_{n}^{k}(X;\mathbb{Z}):=H_{n}(\MC_{\ast}^{k}(X;\mathbb{Z}),\partial)$ 
of the magnitude chain complex $(\MC_{\ast}^{k}(X), \partial^{k}_{\ast})$ in grading $k$.
%$MH_{\ast}^{\ast}(X;\mathbb{Z})$を準距離空間$X$の\textbf{マグニチュードホモロジー(magnitude homology)}と呼ぶ.
\end{dfn}

\begin{prop}[\cite{Asao23B} Definition 4.6, \cite{HR24} p.47]
Let $a,b$ be points of a quasi-metric space $(X,d)$ and  
$\MC^{\ast}_{\ast}(a,b)$ be a submodule of $\MC^{\ast}_{\ast}(X)$ generated by the tuples $(a,x_1\ldots,x_{n-1},b)\in X^{n+1}$.
Then, we have decompositions as follows:
\begin{align}
\MC^{k}_{\ast}(X;\mathbb{Z}) &\cong \bigoplus_{a,b\in X}\MC_{\ast}^{k}(a,b),\\
\MH^{k}_{\ast}(X;\mathbb{Z}) &\cong \bigoplus_{a,b\in X}\MH_{\ast}^{k}(a,b).
\end{align}
\end{prop}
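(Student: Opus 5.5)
The plan is to show that the boundary map of $\MC^{k}_{\ast}(X)$ never mixes tuples with different endpoints. This gives a direct-sum decomposition at the chain level, and the statement for homology follows because homology commutes with direct sums.

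First, at the level of modules. The generators of $\MC^{k}_{n}(X)$ are tuples $(x_0,\ldots,x_n)$ with $x_i\neq x_{i+1}$ and $L(x_0,\ldots,x_n)=k$. Each such tuple has a well-defined first entry $a=x_0$ and last entry $b=x_n$. Sorting the basis by the pair $(a,b)$ partitions it, so as free abelian groups
\begin{equation}
\MC^{k}_{n}(X)=\bigoplus_{a,b\in X}\MC^{k}_{n}(a,b),
\end{equation}
where $\MC^{k}_{n}(a,b)$ is spanned by the generators starting at $a$ and ending at $b$. The degenerate indices should be handled consistently. For $n=0$ the tuple $(a)$ lies in $\MC^{k}_{0}(a,a)$ (nonzero only when $k=0$). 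Finiteness of the sum is not an issue: $X$ is finite here, and in any case direct sums are taken.

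Second, I check that $\partial^{k,n}$ preserves each summand. Consider the face maps $\partial_i$.

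For $0<i<n$, removing $x_i$ keeps both $x_0$ and $x_n$, so the image is either $0$ or a tuple in $\MC^{k}_{n-1}(a,b)$.

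For $i=0$ or $i=n$, removing an endpoint changes the length: $L(x_1,\ldots,x_n)=k-d(x_0,x_1)<k$, since $x_0\neq x_1$ and non-degeneracy together with $d(x_0,x_1)<\infty$ (as $L=k$ is finite) gives $d(x_0,x_1)>0$. So $\partial_0$ and $\partial_n$ vanish on $\MC^{k}_{n}$.

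This is the only step needing care. One should note that non-degeneracy in the stated sense says $d(x,y)=d(y,x)=0$ implies $x=y$, so a one-sided $d(x_0,x_1)=0$ with $x_0\neq x_1$ is a priori possible. In that case $\partial_0$ would map $(x_0,x_1,\ldots,x_n)$ to $(x_1,\ldots,x_n)$, which has the wrong starting point. For the poset metric \eqref{q-metric} this cannot happen, since $d(x,y)=0$ forces $x=y$. In general I would follow the convention of \cite{Asao23B,HR24} that the face maps at the endpoints are omitted or vanish, or else assume the metric is separating in each direction; I would state the assumption explicitly.

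Hence $\partial(\MC^{k}_{n}(a,b))\subseteq \MC^{k}_{n-1}(a,b)$, so each $\MC^{k}_{\ast}(a,b)$ is a subcomplex and the decomposition is one of chain complexes. Taking homology, which commutes with direct sums, gives $\MH^{k}_{\ast}(X)\cong\bigoplus_{a,b}\MH^{k}_{\ast}(a,b)$, where $\MH^{k}_{\ast}(a,b)$ denotes the homology of $\MC^{k}_{\ast}(a,b)$.
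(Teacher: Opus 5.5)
The paper gives no proof of its own here (it only cites the references), and your argument is the standard proof: split the basis by its endpoints $(x_0,x_n)$, then check that the endpoint faces vanish because $d(x_0,x_1)>0$ forces $L(x_1,\ldots,x_n)<k$, so each $\MC^{k}_{\ast}(a,b)$ is a subcomplex and homology commutes with the direct sum. This is correct, and your caveat about one-sided zero distances is well taken: it cannot arise under the paper's literal definition (which, apparently by a slip, includes symmetry), it is harmless under the interior-only boundary convention, and for the poset metric $d(x,y)=0$ already forces $x=y$.
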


\begin{dfn}
The magnitude homology of $\MH_{*}^{*}(X)$ is \textit{diagonal} if $\MH_{n}^{k}(P)$ is trivial whenever $n\neq k$.
\end{dfn}

\begin{dfn}[Magnitude homology of graded posets]
Let $(P,r)$ be a graded poset. We define the magnitude homology $\MH^{\ast}_{\ast}(P)$ of $(P,r)$ 
by the magnitude homology of the quasi-metric space induced by the rank function $r$.
\end{dfn}

\begin{prop}[cf. \cite{KY21} Corollary 5.12, \cite{Iva24} Proposition 7.2]\label{decomposition}
The magnitude homology of a graded poset $(P,r)$ decomposes as follows:
\begin{equation}\label{m decomposition}
\MH_{n}^{k}(P)\simeq \begin{dcases}
\bigoplus_{\substack{x\leq y\\r(y)-r(x)=k}}\tilde{H}_{n-2}(\varDelta(x,y)) & (k\geq 1),\\
\mathbb{Z}^{\#P} & (n=k=0).
\end{dcases}
\end{equation}
\end{prop}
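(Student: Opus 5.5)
We prove Proposition \ref{decomposition} by using the decomposition $\MC^{k}_{\ast}(P)\cong\bigoplus_{a,b}\MC^{k}_{\ast}(a,b)$ from the preceding proposition, and then identifying each summand $\MC^{k}_{\ast}(a,b)$ with a shifted reduced simplicial chain complex of the open interval $(a,b)$.

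First we describe the generators. A tuple $(x_0,\ldots,x_n)$ with $x_i\neq x_{i+1}$ has finite length only if $x_0< x_1<\cdots< x_n$, since $d(x_i,x_{i+1})=\infty$ unless $x_i\le x_{i+1}$. For such a chain the length telescopes: $L=r(x_n)-r(x_0)$. Fix $a,b$ with $a\le b$ and $r(b)-r(a)=k$.
\begin{itemize}
\item If $k=0$, then $a=b$ and the only generator is the $0$-tuple $(a)$. This gives $\mathbb{Z}^{\#P}$ in bidegree $(0,0)$ and nothing else, since a chain of length $0$ with $n\ge1$ would need $x_0<x_1$ with equal rank, which is impossible.
\item If $k\ge1$, the generators of $\MC^{k}_{n}(a,b)$ are the chains $a<x_1<\cdots<x_{n-1}<b$. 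These correspond bijectively to the $(n-2)$-simplices $\{x_1,\ldots,x_{n-1}\}$ of $\varDelta(a,b)$. For $n=1$ we get the empty simplex, in degree $-1$, which is what reduced homology needs.
\end{itemize}
If $a\not\le b$ or $r(b)-r(a)\ne k$, the summand is zero.

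Next we compare the differentials. For an interior index $1\le i\le n-1$, deleting $x_i$ leaves the endpoints fixed, so the length stays $k$ and $\partial_i$ is the ordinary face deletion. Deleting $x_0=a$ or $x_n=b$ gives length $r(b)-r(x_1)<k$ or $r(x_{n-1})-r(a)<k$, since the chain is strict and the rank is strictly increasing along covers. Hence $\partial_0=\partial_n=0$ in grading $k$. The magnitude differential is therefore $\sum_{i=1}^{n-1}(-1)^i\partial_i$. Reindexing with $j=i-1$ gives $-\sum_{j=0}^{n-2}(-1)^j d_j$, which is minus the simplicial boundary of the augmented chain complex of $\varDelta(a,b)$. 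In the case $n=1$ the map to degree $0$ is zero, because $\MC^{k}_0=0$ for $k\ge1$. This matches the augmentation on the other side: the empty simplex sits in degree $-1$ and nothing lies below it.

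A global sign on the differential does not change the homology. So we obtain a chain isomorphism $\MC^{k}_{\ast}(a,b)\cong\tilde C_{\ast-2}(\varDelta(a,b))$, and hence $\MH^k_n(a,b)\cong\tilde H_{n-2}(\varDelta(a,b))$. Summing over all pairs with $r(b)-r(a)=k$ gives \eqref{m decomposition}.

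The main point to check is the vanishing of the endpoint faces $\partial_0$ and $\partial_n$, which uses that $r$ strictly increases along strict chains. The degenerate cases also need care: when $b$ covers $a$, the interval $(a,b)$ is empty and $\tilde H_{-1}(\emptyset)=\mathbb{Z}$ should appear in $\MH^1_1$. This works with the convention that the empty complex has the empty simplex in degree $-1$.
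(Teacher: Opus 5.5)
Your proof is correct and follows the same route as the paper. It splits the magnitude chain complex by endpoints, identifies chains $a<x_1<\cdots<x_{n-1}<b$ with $(n-2)$-simplices of $\varDelta(a,b)$, and observes that in grading $k$ only the interior face deletions survive. Your version is more careful than the paper's terse argument, which writes unreduced $H_{n-2}$ and leaves implicit the global sign, the empty-simplex case $n=1$, and the $k=0$ case.
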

\begin{proof}
A path $(x_0,\ldots,x_n)$ of finite length in $P$ forms a chain in this order, and the length of this path is $r(x_n)-r(x_0)$.
If $n \geq 2$, then we have the following isomorphism:
\begin{equation}
\MC_{n}^{k}(x_0,x_n) \cong \mathbb{Z}\langle (x_1<\cdots<x_{n-1})\mid x_0<x_1<x_{n-1}<x_n\rangle
\cong \tilde{C}_{n-2}(\varDelta(x_0,x_n)).
\end{equation}
If $n=1$, we have $\MC_{n}^{k}(x,y) \cong \mathbb{Z}\langle (x,y)\rangle$.
Also for the boundary operators of magnitude chain complex, we have
\begin{equation}
\partial(x_0< \cdots <x_n)=\sum^{n-1}_{i=1}(-1)^{i}(x_0< \cdots < x_{i-1} < x_{i+1}< \ldots < x_n).
\end{equation}
Therefore, these isomorphisms are compatible with the boundary operators,
and we obtain $\MH_{n}^{k}(x,y)\cong H_{n-2}(\varDelta(x,y))\ (n\geq 1)$.
\end{proof}

\begin{rmk}
Proposition \ref{decomposition} implies that for a graded poset, 
diagonality of magnitude homology is equivalent to Cohen-Macaulayness (see, \cite[Chapter 3]{Sta11} for example).
\end{rmk}

\begin{prop}[cf. \cite{Asao23B} Proposition 3.36, Proposition 4.8]\label{weighting}
For a graded poset $(P,r)$, we have
\begin{equation}
\sum_{n=0}^{k}(-1)^{n}\rk \MH^{k}_{n}(P)=\sum_{\substack{x,y\in P\\ r(y)-r(x)=k}}\mu(x,y).
\end{equation}
\end{prop}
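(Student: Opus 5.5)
The plan is to take the Euler characteristic of the magnitude chain complex and reduce it to Möbius function values, using the decomposition into pairs $(x,y)$. Since $\MC^{k}_{\ast}(P)$ is a finitely generated free chain complex, the alternating sum of the ranks of its homology equals the alternating sum of the ranks of its chain groups:
\begin{equation}
\sum_{n}(-1)^{n}\rk \MH^{k}_{n}(P)=\sum_{n}(-1)^{n}\rk \MC^{k}_{n}(P).
\end{equation}
A tuple with consecutive entries distinct and finite length $k$ is a strict chain $x_0<\cdots<x_n$ with $r(x_n)-r(x_0)=k$. Because $P$ is graded, such a chain has $n\le k$, so the sum over $n$ runs only from $0$ to $k$, matching the statement. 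I will use the decomposition $\MC^{k}_{\ast}(P)\cong\bigoplus_{a,b}\MC^{k}_{\ast}(a,b)$ and prove the identity separately for each pair $x\le y$ with $r(y)-r(x)=k$.

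For $k=0$, both sides equal $\#P$: the chains are the single points, and $\mu(x,x)=1$. For $k\ge1$, fix $x<y$ with $r(y)-r(x)=k$.
\begin{itemize}
\item For $n\ge 1$, the generators of $\MC^{k}_{n}(x,y)$ are the chains $x<x_1<\cdots<x_{n-1}<y$.
\item These correspond bijectively to the $(n-2)$-simplices of $\varDelta(x,y)$, with $n=1$ giving the empty simplex.
\end{itemize}
Hence
\begin{equation}
\sum_{n\ge1}(-1)^{n}\rk\MC^{k}_{n}(x,y)=\sum_{j\ge -1}(-1)^{j}\#\{j\text{-simplices}\}=\tilde{\chi}(\varDelta(x,y)).
\end{equation}
By Proposition \ref{Mobius Euler}, applied to the closed interval $[x,y]$ with minimum $x$ and maximum $y$, this equals $\mu(x,y)$. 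Summing over all such pairs gives the claim.

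Alternatively, one can pass through homology directly with Proposition \ref{decomposition}. Since $\tilde{\chi}(\varDelta(x,y))=\sum_{j}(-1)^{j}\rk\tilde H_{j}(\varDelta(x,y))$ and $(-1)^{n}=(-1)^{n-2}$, the same identity follows.

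The main obstacle is bookkeeping rather than mathematics. Two points need care:
\begin{itemize}
\item The $n=1$ term corresponds to the empty chain, i.e.\ the reduced degree $-1$ homology of $\varDelta(x,y)$. This is what makes a covering pair $x\prec y$ contribute $-1=\mu(x,y)$.
\item The Euler characteristic must be the reduced one. The paper's corollary writes $\mu(x,y)=\chi(\varDelta(x,y))$, but Proposition \ref{Mobius Euler} gives $\tilde\chi$, and $\tilde\chi$ is the version needed here.
\end{itemize}
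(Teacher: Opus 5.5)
Your proof is correct and is essentially the paper's argument: the paper just combines Proposition \ref{decomposition} with Proposition \ref{Mobius Euler}, which is exactly your ``alternative'' route. Your main route computes the Euler characteristic at the chain level first, which is a harmless variant; it only needs the bijection between generators of $\MC^{k}_{n}(x,y)$ and $(n-2)$-simplices of $\varDelta(x,y)$, not compatibility with the boundary maps. Your remark that the reduced Euler characteristic $\tilde{\chi}$ is the right quantity (so covering pairs contribute $-1$) is also correct. Pairs with $r(y)-r(x)=k$ but $x\not\le y$ contribute $0$ on both sides, so restricting to $x\le y$ loses nothing.
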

\begin{proof}
Use Proposition\ref{Mobius Euler} and Proposition\ref{decomposition}.
\end{proof}

\begin{cor}\label{diagonal}
If $(P,r)$ is diagonal, we have
\begin{equation}
\rk \MH^{k}_{k}(P)=\sum_{\substack{x,y\in P\\ r(y)-r(x)=k}}|\mu(x,y)|.
\end{equation}
\end{cor}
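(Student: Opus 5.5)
The plan is to work interval by interval, using the decomposition of Proposition \ref{decomposition}, rather than only with the global alternating sum of Proposition \ref{weighting}. The global identity alone would only give $(-1)^k \rk \MH^k_k(P) = \sum \mu(x,y)$. That yields the absolute values only if all the summands $\mu(x,y)$ with $r(y)-r(x)=k$ have the same sign, and proving this is the one real point of the argument.

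First I dispose of $k=0$. Here $\MH^0_0(P)\cong \mathbb{Z}^{\#P}$, and the pairs with $r(y)-r(x)=0$ and $x\leq y$ are exactly the pairs $x=y$, each contributing $|\mu(x,x)|=1$. So both sides equal $\#P$.

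For $k\geq 1$, fix $x\leq y$ with $r(y)-r(x)=k$. By Proposition \ref{decomposition}, $\MH^k_n(P)$ contains $\tilde H_{n-2}(\varDelta(x,y))$ as a direct summand for every $n$. Diagonality forces $\tilde H_{j}(\varDelta(x,y))=0$ for all $j\neq k-2$.

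I then compute $\mu(x,y)$ from this. By Proposition \ref{Mobius Euler}, applied to the closed interval $[x,y]$, whose proper part is $(x,y)$, we have $\mu(x,y)=\tilde\chi(\varDelta(x,y))$. Computing the reduced Euler characteristic via ranks of reduced homology, so that torsion does not matter, gives
\begin{equation*}
\mu(x,y)=\sum_j(-1)^j\rk\tilde H_j(\varDelta(x,y))=(-1)^{k}\rk \tilde H_{k-2}(\varDelta(x,y)).
\end{equation*}
The case $k=1$ is consistent with this. There $(x,y)=\emptyset$, so $\tilde H_{-1}(\varDelta(\emptyset))=\mathbb{Z}$ and $\mu(x,y)=-1$, as the formula predicts. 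Hence $|\mu(x,y)|=\rk\tilde H_{k-2}(\varDelta(x,y))$.

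Summing over all pairs with $r(y)-r(x)=k$ and using Proposition \ref{decomposition} with $n=k$ gives $\rk\MH^k_k(P)=\sum|\mu(x,y)|$. Alternatively, one can note that all these $\mu(x,y)$ share the sign $(-1)^k$ and then invoke Proposition \ref{weighting} directly.

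The only delicate step is using the reduced Euler characteristic. The preceding corollary is written with $\chi$, but Proposition \ref{Mobius Euler} gives the reduced version, and the empty-interval case $k=1$ must be treated with $\tilde H_{-1}$.
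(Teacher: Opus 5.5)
Your proof is correct, and it is more careful than the paper's. The paper gives no separate proof. The corollary sits right after Proposition \ref{weighting} and is meant to be read off from it: under diagonality the alternating sum collapses to $(-1)^k\rk\MH^k_k(P)=\sum_{r(y)-r(x)=k}\mu(x,y)$.

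As you point out, replacing $\mu(x,y)$ by $|\mu(x,y)|$ in that identity tacitly requires every $\mu(x,y)$ with $r(y)-r(x)=k$ to be $0$ or of sign $(-1)^k$. The paper does not justify this.

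Your interval-by-interval route supplies exactly this missing step. Since $\MH^k_\ast(P)$ splits as a direct sum over pairs $x<y$ (Proposition \ref{decomposition}), diagonality passes to each summand. So $\tilde H_\ast(\varDelta(x,y))$ is concentrated in degree $k-2$, and Proposition \ref{Mobius Euler} applied to $[x,y]$ gives $|\mu(x,y)|=\rk\tilde H_{k-2}(\varDelta(x,y))$ for each pair separately.

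The ingredients are the same ones used to prove Proposition \ref{weighting}. You apply them before summing over pairs rather than after. This costs nothing and gives the finer summand-wise identity $\rk\MH^k_k(x,y)=|\mu(x,y)|$.

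Your side remarks are also right:
\begin{itemize}
\item The corollary following Proposition \ref{Mobius Euler} should read $\tilde\chi$ rather than $\chi$.
\item The cover case $k=1$ works only with the convention $\tilde H_{-1}(\varDelta(\emptyset))=\mathbb{Z}$.
\item Restricting the sum to comparable pairs is harmless, since $\mu(x,y)=0$ otherwise.
\end{itemize}
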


\subsection{Filtered order complex of posets}
The following definition is the special case of "filtered nerve of digraphs" defined in \cite{DIMZ24}.
\begin{dfn}[Filtration of the order complex of a graded poset, cf. \cite{DIMZ24}, Section 1.3]
Let $(P,r)$ be a graded poset of rank $n$.
For each $0\leq k \leq n$, we define the subcomplex $\varDelta^{(k)}(P)$ of the order complex $\varDelta(P)$ of $P$ by
\begin{equation}
\varDelta^{(k)}(P):=\{(x_0<\cdots<x_l)\mid r(x_l)-r(x_0)\leq k\}.
\end{equation}
The family of subcomplexes $\{\varDelta^{(k)}(P)\}_{0\leq k \leq n}$ will be reffered to as the \textit{filtered order complexes} of $P$.
\end{dfn}

\begin{ex}\label{chain}
Let $P$ be a chain $[n]:=\{0<1<2<\cdots<n\}$ of length $n$, and define its rank function by $r(i)=i\ (0\leq i \leq n)$.
Hasse diagram of $P$ is a line graph consists of $(n+1)$ vertices, 
and the order complex $\varDelta(P)$ is a $n$-simplex.
$\varDelta^{(k)}(P)$ is contractible.

\begin{figure}[H]
  \centering
  \begin{tabular}{ccc}
    \begin{minipage}{0.3\textwidth}
      \centering

      \begin{tikzpicture}
        
        \coordinate (0) at (0,0);
        \coordinate (1) at (0,1);
        \coordinate (2) at (0,2);
        \coordinate (3) at (0,3);

        \node[below] at (0) {$0$};
        \node[right] at (1) {$1$};
        \node[right] at (2) {$2$};
        \node[above] at (3) {$3$};

        \draw (0)--(1)--(2)--(3);

        \foreach\P in {0,1,2,3} \fill[black]
        (\P)circle(0.1);
      \end{tikzpicture}
      \caption*{$\varDelta^{(1)}(P)$}
    \end{minipage}
  
    \begin{minipage}{0.3\textwidth}
      \centering
      \begin{tikzpicture}
        \coordinate (0) at (0,2);
        \coordinate (1) at (-1,0);
        \coordinate (2) at (1,0);
        \coordinate (3) at (0,-2);

        \node[above] at (0) {$0$};
        \node[left] at (1) {$1$};
        \node[right] at (2) {$2$};
        \node[below] at (3) {$3$};

        \fill[lightgray, draw=black] (0)--(1)--(2)--cycle;
        \fill[lightgray, draw=black] (1)--(2)--(3)--cycle;

        \foreach\P in {0,1,2,3} \fill[black]
        (\P)circle(0.1);
      \end{tikzpicture}
      \caption*{$\varDelta^{(2)}(P)$}
    \end{minipage}

    \begin{minipage}{0.3\textwidth}
    \begin{tikzpicture}
        \coordinate (0) at (0,2.5);
        \coordinate (1) at (-2,-1);
        \coordinate (2) at (2,-1);
        \coordinate (3) at (2,1);

        \node[above] at (0) {$0$};
        \node[left] at (1) {$1$};
        \node[right] at (2) {$2$};
        \node[right] at (3) {$3$};

        \fill[gray, draw=black] (0)--(1)--(2)--cycle;
        \fill[gray, draw=black] (0)--(2)--(3)--cycle;
        \draw[black, dashed] (1)--(3);

        \foreach\P in {0,1,2,3} \fill[black]
        (\P)circle(0.1);
    \end{tikzpicture}
    \caption*{$\varDelta^{(3)}(P)$}
  \end{minipage}
  \end{tabular}

  \caption*{Example \ref{chain}, $n=3$}
\end{figure}
\end{ex}

\begin{rmk}
It is easy to see that when $\varDelta^{(1)}(P)$ is contractible, 
subcomplexes $\varDelta^{(k)}(P)\ (k\geq 2)$ are also contractible(sequence of collapsing $k$-simplices having vertices corresponding the leaf of $\varDelta^{(1)}(P))$. 
It is an open problem whether $\varDelta^{(k)}(P)$ is contractible whenever $\varDelta^{(k-1)}(P)$ is contractible.
\end{rmk}

When considering the topological properties of filtered order complexes, 
sometimes it is useful to represent these subcomplexes as the union of the order complexes of rank selections glued together.

\begin{prop}\label{rank selection}
For a graded poset $(P,r)$ of rank $n$, we have
\begin{equation}\label{sum}
\varDelta^{(k)}(P)=\bigcup_{i=0}^{n-k}\varDelta(P_{[i,i+k]}).
\end{equation}
Also, we have the injection $H_{k}(\varDelta(P_{[i,i+k]}))\hookrightarrow H_k(\varDelta^{(k)}(P))$.
\end{prop}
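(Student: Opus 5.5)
The set equality is proved by double inclusion, and the injection by comparing simplicial chains in degrees $k$ and $k+1$.

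\emph{Inclusion $\supseteq$.} Let $\sigma=(x_0<\cdots<x_l)$ be a simplex of $\varDelta(P_{[i,i+k]})$. Every vertex has rank in $[i,i+k]$, so $r(x_l)-r(x_0)\leq k$. Hence $\sigma\in\varDelta^{(k)}(P)$.

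\emph{Inclusion $\subseteq$.} Let $\sigma=(x_0<\cdots<x_l)$ with $r(x_l)-r(x_0)\leq k$. Set $i:=\min\{r(x_0),\,n-k\}$. We check that every $r(x_j)$ lies in $[i,i+k]$.
\begin{itemize}
\item If $r(x_0)\leq n-k$, then $i=r(x_0)$. All ranks of $\sigma$ lie in $[r(x_0),r(x_l)]\subseteq[i,i+k]$.
\item If $r(x_0)>n-k$, then $i=n-k$. All ranks lie in $[r(x_0),n]\subseteq[n-k,n]=[i,i+k]$.
\end{itemize}
In both cases $0\leq i\leq n-k$ and $\sigma\in\varDelta(P_{[i,i+k]})$. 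This proves \eqref{sum}. (The edge case $k>n$ does not occur, since $0\leq k\leq n$.)

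\emph{Injectivity.} Write $A:=\varDelta(P_{[i,i+k]})$ and $B:=\varDelta^{(k)}(P)$. By the set equality, $A$ is a subcomplex of $B$.

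First, both $A$ and $B$ have dimension at most $k$. A chain in $P$ with rank difference at most $k$ has strictly increasing ranks, so it has at most $k+1$ elements. Hence $C_{k+1}(A)=C_{k+1}(B)=0$. It follows that $H_k(A)=Z_k(A)=\ker\bigl(\partial\colon C_k(A)\to C_{k-1}(A)\bigr)$, and likewise $H_k(B)=Z_k(B)$.

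Second, the map $H_k(A)\to H_k(B)$ induced by inclusion is therefore the restriction of the inclusion $C_k(A)\hookrightarrow C_k(B)$ to cycles. Since $C_k(A)\hookrightarrow C_k(B)$ is injective, $Z_k(A)\to Z_k(B)$ is injective, and so is $H_k(A)\to H_k(B)$.

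The only point needing care is this dimension bound, i.e.\ that no boundaries occur in degree $k$. It holds because ranks strictly increase along a chain in a graded poset, and it works over $\mathbb{Z}$ with no torsion issues. One could add that the images of the different $H_k(\varDelta(P_{[i,i+k]}))$ need not be independent, but the statement only asks for the injection.
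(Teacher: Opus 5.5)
Your proof is correct. The set equality is the paper's ``by definitions'' step made explicit, and your choice $i=\min\{r(x_0),n-k\}$ handles the case $r(x_0)>n-k$, which the paper leaves implicit.

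For the injection, the paper only cites Mayer--Vietoris. The natural way to run that is as follows. Write $\varDelta^{(k)}(P)=A\cup R$ with $A=\varDelta(P_{[i,i+k]})$ and $R=\bigcup_{j\neq i}\varDelta(P_{[j,j+k]})$. Then $A\cap R=\bigcup_{j\neq i}\varDelta(P_{[i,i+k]\cap[j,j+k]})$ has dimension at most $k-1$, so $H_k(A\cap R)=0$, and $H_k(A)\oplus H_k(R)\to H_k(\varDelta^{(k)}(P))$ is injective.

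Your chain-level argument is more elementary and more general. It only uses $\dim\varDelta^{(k)}(P)\leq k$, so it shows $H_k(A)\to H_k(B)$ is injective for any subcomplex $A$ of a complex $B$ of dimension at most $k$. The Mayer--Vietoris version instead uses the specific overlaps. In exchange it gives the stronger conclusion that the images of the different pieces are independent, which is what the paper needs later in the proof of Theorem~\ref{closed manifold}.

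This makes your closing remark false: the images of the groups $H_k(\varDelta(P_{[i,i+k]}))$ \emph{are} independent, and your own method shows it. A $k$-simplex $(x_0<\cdots<x_k)$ of $\varDelta^{(k)}(P)$ must have ranks exactly $r(x_0),r(x_0)+1,\ldots,r(x_0)+k$. So it lies in $\varDelta(P_{[i,i+k]})$ for the single index $i=r(x_0)$. Hence $C_k(\varDelta^{(k)}(P))=\bigoplus_{i}C_k(\varDelta(P_{[i,i+k]}))$, and the map $\bigoplus_i Z_k(\varDelta(P_{[i,i+k]}))\to Z_k(\varDelta^{(k)}(P))=H_k(\varDelta^{(k)}(P))$ is injective.

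That map need not be surjective. A $k$-cycle can split into non-closed pieces whose boundaries cancel on the overlaps $\varDelta(P_{[i+1,i+k]})$. You should delete that sentence or replace it with this observation.
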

\begin{proof}
We have \eqref{sum} by definitions. Injectivity follows from Mayer-Vietoris sequence.
\end{proof}

\begin{prop}\label{relative homology}
For a graded poset $(P,r)$, we have the following isomorphism:
\begin{equation}
\MH^{k}_{n}(P)\cong \begin{cases}
H_{n}(\varDelta^{(k)}(P),\varDelta^{(k-1)}(P)) & (n \geq 1)\\
\mathbb{Z}^{\#P} & (k=n=0),\\
0 & (\mathrm{otherwise}).
\end{cases}
\end{equation}
\end{prop}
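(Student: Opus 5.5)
I will prove Proposition \ref{relative homology} by identifying the magnitude chain complex in grading $k$ directly with the relative simplicial chain complex $C_\ast(\varDelta^{(k)}(P))/C_\ast(\varDelta^{(k-1)}(P))$. The cases $k=n=0$ and the remaining degenerate cases come straight from the definitions and Proposition \ref{decomposition}. So the substance is the case $n\geq 1$, together with checking that the relative homology vanishes in the excluded degrees.

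\textbf{Step 1: the chain groups.} As in the proof of Proposition \ref{decomposition}, a tuple $(x_0,\ldots,x_n)$ with $x_i\neq x_{i+1}$ and finite length must be a strict chain $x_0<\cdots<x_n$, and its length is $r(x_n)-r(x_0)$. Hence $\MC^k_n(P)$ is free on the $n$-simplices $\sigma=(x_0<\cdots<x_n)$ of $\varDelta(P)$ with $r(x_n)-r(x_0)=k$. On the other side, the relative chain group $C_n(\varDelta^{(k)}(P),\varDelta^{(k-1)}(P))$ is free on the $n$-simplices of $\varDelta^{(k)}(P)$ that do not lie in $\varDelta^{(k-1)}(P)$. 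These are exactly the chains with $r(x_n)-r(x_0)=k$. This gives a basis bijection, and hence an isomorphism of graded abelian groups, for every $n\geq 0$.

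\textbf{Step 2: the differentials.} The simplicial boundary of $\sigma$ is $\sum_{i=0}^n(-1)^i d_i\sigma$. Deleting an interior vertex $x_i$ with $0<i<n$ preserves the endpoints, so the face still has span $k$. Deleting $x_0$ or $x_n$ gives span $r(x_n)-r(x_1)$ or $r(x_{n-1})-r(x_0)$. Since $r$ is strictly increasing along a chain, this span is $<k$, so the face lies in $\varDelta^{(k-1)}(P)$ and is zero in the relative complex. In the magnitude complex, $\partial_i$ keeps exactly the faces whose length is still $k$; by the same computation these are the interior deletions, with the same signs $(-1)^i$. So the bijection of Step 1 is a chain isomorphism, and $\MH^k_n(P)\cong H_n(\varDelta^{(k)}(P),\varDelta^{(k-1)}(P))$ for all $n$.

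\textbf{Step 3: the degenerate cases.} For $k=0$, we read $\varDelta^{(-1)}(P)=\emptyset$, so the relative group is $H_n(\varDelta^{(0)}(P))$. Since $\varDelta^{(0)}(P)$ is the discrete set $P$, this is $\mathbb{Z}^{\#P}$ for $n=0$ and $0$ otherwise. For $k\geq1$, a chain of span $k\ge 1$ has at least two elements, so $n\geq1$; thus $\MC^k_0=0$, which is consistent with the ``otherwise'' clause.

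The only point needing care is the sign and indexing convention in Step 2: the magnitude boundary is indexed over all $i$ from $0$ to $n$, with the endpoint terms vanishing, which matches the simplicial boundary with the endpoint faces killed in the quotient. I expect no real obstacle beyond this bookkeeping, together with the convention $\varDelta^{(-1)}(P)=\emptyset$.
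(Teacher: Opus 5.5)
Your proposal is correct and follows the paper's route. Both arguments identify $\MC^k_n(P)$ with $C_n(\varDelta^{(k)}(P),\varDelta^{(k-1)}(P))$ via strict chains of span exactly $k$, and then check that the differentials agree. The difference is that you actually carry out that check (interior deletions preserve the span, endpoint deletions drop it into $\varDelta^{(k-1)}(P)$) and state the convention $\varDelta^{(-1)}(P)=\emptyset$ explicitly, whereas the paper leaves both as ``easy to see.''
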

\begin{proof}
If $n\geq 1$, we have the following isomorphism regarding the magnitude chain complexes of $P$:
\begin{align}
\MC^{k}_{n}(P) & = \mathbb{Z}\langle (x_0 < \cdots < x_n) \mid r(x_n)-r(x_0)=k\rangle \\
                            & = C_{n}(\varDelta^{(k)}(P),\varDelta^{(k-1)}(P))
\end{align}
It is easy to see that the boundary operators are compatible with the isomorphism.
Therefore, we obtain $\MH^{k}_{n}(P)\cong H_{n}(\varDelta^{(k)}(P),\varDelta^{(k-1)}(P))$.
\end{proof}

\begin{thm}\label{diagonal order complex}
If $P$ is diagonal, then we have the following isomorphism of the homology groups of the subcomplexes:
\begin{equation}\label{homology of Delta^k}
H_{i}(\varDelta^{(k)}(P))\cong H_{i}(\varDelta(P))\quad (0\leq i\leq k-1).
\end{equation}
The rank of the top homology group satisfies the following inequality:
\begin{equation}\label{rank}
\beta_{k}(\varDelta^{(k)}(P))\leq \sum_{\substack{x\leq y\\ r(y)-r(x)=k}}|\mu(x,y)|.
\end{equation}
Here, $\beta_{i}$ denotes the $i$-th Betti number.
\end{thm}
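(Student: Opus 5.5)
The plan is to combine Proposition \ref{relative homology} with the long exact sequences of the pairs $(\varDelta^{(j)}(P),\varDelta^{(j-1)}(P))$ and induct along the filtration
\begin{equation*}
\varDelta^{(k)}(P)\subset\varDelta^{(k+1)}(P)\subset\cdots\subset\varDelta^{(n)}(P)=\varDelta(P).
\end{equation*}
By diagonality and Proposition \ref{relative homology}, for every $j\geq 1$ the relative homology satisfies $H_{m}(\varDelta^{(j)}(P),\varDelta^{(j-1)}(P))\cong \MH^{j}_{m}(P)=0$ whenever $m\neq j$.

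Consider the long exact sequence of the pair $(\varDelta^{(j)}(P),\varDelta^{(j-1)}(P))$:
\begin{equation*}
H_{i+1}(\varDelta^{(j)},\varDelta^{(j-1)})\to H_i(\varDelta^{(j-1)})\to H_i(\varDelta^{(j)})\to H_i(\varDelta^{(j)},\varDelta^{(j-1)}).
\end{equation*}
If $i+1\neq j$ and $i\neq j$, both outer terms vanish, so the inclusion induces an isomorphism $H_i(\varDelta^{(j-1)}(P))\cong H_i(\varDelta^{(j)}(P))$. In particular this holds for all $i\leq j-2$.

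Now fix $k$ and $i\leq k-1$. For every $j$ with $k+1\leq j\leq n$ we have $i\leq k-1\leq j-2$, so each step $\varDelta^{(j-1)}\subset\varDelta^{(j)}$ induces an isomorphism on $H_i$. Composing these isomorphisms gives $H_i(\varDelta^{(k)}(P))\cong H_i(\varDelta^{(n)}(P))=H_i(\varDelta(P))$, which is \eqref{homology of Delta^k}. The $k=0$ case is vacuous. The range $0\leq i$ causes no trouble, since $H_0$ is handled by the same sequence with $j\geq 2$.

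For \eqref{rank}, take the pair $(\varDelta^{(k)},\varDelta^{(k-1)})$ in degree $k$:
\begin{equation*}
H_k(\varDelta^{(k-1)}(P))\to H_k(\varDelta^{(k)}(P))\to H_k(\varDelta^{(k)},\varDelta^{(k-1)})\cong\MH^{k}_{k}(P).
\end{equation*}
The left term vanishes because $\varDelta^{(k-1)}(P)$ has dimension at most $k-1$: a chain $x_0<\cdots<x_l$ with $r(x_l)-r(x_0)\leq k-1$ has $l\leq k-1$, since ranks strictly increase along a chain. Hence $H_k(\varDelta^{(k)}(P))$ injects into $\MH^k_k(P)$. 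Tensoring with $\mathbb{Q}$, or simply comparing ranks of finitely generated abelian groups, gives $\beta_k(\varDelta^{(k)}(P))\leq\rk\MH^k_k(P)$.

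Corollary \ref{diagonal} identifies $\rk\MH^k_k(P)$ with $\sum_{r(y)-r(x)=k}|\mu(x,y)|$, which completes the bound. Since $H_k$ of a $k$-dimensional complex is free, the injection is fine integrally as well.

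I do not expect a real obstacle here; the argument is straightforward. The only points needing care are the index bookkeeping, which is the requirement $i\leq j-2$ for every $j>k$, and the dimension bound on $\varDelta^{(k-1)}(P)$ that kills the left term.
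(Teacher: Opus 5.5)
Your proof is correct and follows the same route as the paper. You use the long exact sequences of the pairs $(\varDelta^{(j)}(P),\varDelta^{(j-1)}(P))$ with Proposition~\ref{relative homology} and diagonality to get the isomorphisms for $i\leq k-1$, then the injection $H_k(\varDelta^{(k)}(P))\hookrightarrow \MH^{k}_{k}(P)$ with Corollary~\ref{diagonal} for the rank bound, and you also make explicit what the paper leaves implicit: $H_k(\varDelta^{(k-1)}(P))=0$ because $\varDelta^{(k-1)}(P)$ has dimension at most $k-1$, which is exactly why that map is injective.
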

\begin{proof}
We have the following long exact sequence of homology groups for a pair of spaces $(\varDelta^{(k+1)}(P),\varDelta^{(k)}(P))$:
\begin{equation}\label{LES}
\dots \rightarrow H_{i}(\varDelta^{(k)}(P))\rightarrow H_{i}(\varDelta^{(k+1)}(P)) \rightarrow H_{i}(\varDelta^{(k+1)}(P),\varDelta^{(k)}(P))\rightarrow \dots.
\end{equation}
By proposition \ref{relative homology} and diagonality, we obtain $H_i(\varDelta^{(k+1)}(P))\cong H_{i}(\varDelta^{(k)}(P))$.
Applying this repeatedly, we get \eqref{homology of Delta^k}.\\
By \eqref{LES}, we have the inclusion $H_{k}(\varDelta^{(k)}(P))\hookrightarrow \MH^{k}_{k}(P)$; 
hence Proposition \ref{diagonal} yields \eqref{rank}.
\end{proof}

\begin{ex}
The following is an example of a poset which is diagonal, non-contractible and the equality of \eqref{rank} holds.
Let $P$ be a poset of rank $2$ with the Hasse diagram shown in Figure \ref{fig:P}.
The magnitude homology of $P$ and the homotopy types of $\varDelta^{(k)}(P)\ (0\leq k \leq 2)$ are as follows:
\begin{gather}
\MH^{2}_{i}(P)\cong 0\ (i=0,1,2), \quad
\MH^{1}_{i}(P)\cong \begin{cases}
\mathbb{Z}^{8} & (i=1)\\
0 & (\mathrm{otherwise}),
\end{cases} \quad
\MH^{0}_{0}(P)\cong \mathbb{Z}^{8},\\
\varDelta^{(2)}(P)\simeq \mathbb{S}^{1}, \quad
\varDelta^{(1)}(P) \simeq \mathbb{S}^1, \quad
\varDelta^{(0)}(P) \simeq \bigvee^{7}\mathbb{S}^0.
\end{gather}
The equality of \eqref{rank} holds in the case $k=2$.
\begin{figure}[H]
\centering
\begin{minipage}{0.45\textwidth}
\centering
        \begin{tikzpicture}
            \coordinate (1) at (-1,0) ;
            \coordinate (2) at (1,0) ;
            \coordinate (3) at (-3,2) ;
            \coordinate (4) at (-1,2) ;
            \coordinate (5) at (1,2) ;
            \coordinate (6) at (3,2) ;
            \coordinate (7) at (-1,4);
            \coordinate (8) at (1,4);

            \node[left] at (1) {$1$};
            \node[right] at (2) {$2$};
            \node[left] at (3) {$3$};
            \node[left] at (4) {$4$};
            \node[right] at (5) {$5$};
            \node[right] at (6) {$6$};
            \node[left] at (7) {$7$};
            \node[right] at (8) {$8$};

            \draw (1)--(3)--(7)--(4)--(2)--(6)--(8)--(5)--cycle;
            \foreach\P in {1,2,3,4,5,6,7,8} \fill[black]
            (\P)circle(0.1);
        \end{tikzpicture}
        \caption{Hasse diagram of $P$.}
        \label{fig:P}
    \end{minipage} \hspace{0.05\textwidth}
    \begin{minipage}{0.45\textwidth}
      \begin{tikzpicture}
            \coordinate (1) at (-1,1) ;
            \coordinate (2) at (1,-1) ;
            \coordinate (3) at (-3,0) ;
            \coordinate (4) at (0,-3) ;
            \coordinate (5) at (0,3) ;
            \coordinate (6) at (3,0) ;
            \coordinate (7) at (-1,-1);
            \coordinate (8) at (1,1);

            \node[above left] at (1) {$1$};
            \node[below right] at (2) {$2$};
            \node[left] at (3) {$3$};
            \node[left] at (4) {$4$};
            \node[right] at (5) {$5$};
            \node[right] at (6) {$6$};
            \node[below left] at (7) {$7$};
            \node[above right] at (8) {$8$};

            \fill[lightgray, draw=black](1)--(3)--(7)--cycle;
            \fill[lightgray, draw=black](1)--(5)--(8)--cycle;
            \fill[lightgray, draw=black](2)--(7)--(4)--cycle;
            \fill[lightgray, draw=black](2)--(6)--(8)--cycle;

            \foreach\P in {1,2,3,4,5,6,7,8} \fill[black]
            (\P)circle(0.1);
      \end{tikzpicture}
      \caption{The order complex of $P$.}
\end{minipage}
\end{figure}
\end{ex}

\section{Magnitude homology and filtered order complexes associated with manifolds}
\subsection{Magnitude homology of posets associated with manifolds}
First, we review some of the combinatorial aspects of manifolds.
Throughout this section, we deal with triangulable spaces.

\begin{dfn}
A topological space $X$ is called a \textit{homology $n$-manifold (without boundaries)} if for each point $x$ of $X$, 
the local homology group $H_{i}(X,X-x)$ vanishes if $i\neq n$ and is isomorphic to $\mathbb{Z}$ if $i=n$.
\end{dfn}

Usual topological $n$-manifolds (without boundaries) are homology $n$-manifolds(by excisions and long exact sequence of a pair of spaces).

\begin{prop}[\cite{Mun84b} Theorem 63.2]\label{link sphere}
Let $\varDelta$ be a triangulated homology $n$-manifold. 
Let $\sigma\in \varDelta$ be a $k$-simplex of $\varDelta$. 
If $\mathrm{lk}\ \sigma$ is non-empty, then it is a homology $n-k-1$ sphere.
\end{prop}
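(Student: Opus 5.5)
The plan is to reduce the statement to local homology at the barycenter of $\sigma$ and then to a Künneth/join computation. Let $|\varDelta| = X$ be a homology $n$-manifold, $\sigma$ a $k$-simplex, and $b$ an interior point of $|\sigma|$ (say its barycenter).

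\textbf{Step 1: identify $H_i(X, X-b)$ with the relative homology of the closed star.} By excision, replace $X$ by a neighbourhood of $b$: the open star of $\sigma$, or equivalently the closed star $\overline{\mathrm{st}}(\sigma)=\sigma\ast \mathrm{lk}\,\sigma$. Since $\sigma\ast\mathrm{lk}\,\sigma - b$ deformation retracts radially from $b$ onto $\partial\sigma\ast \mathrm{lk}\,\sigma$, we obtain
\begin{equation*}
H_i(X,X-b)\cong H_i(\sigma\ast \mathrm{lk}\,\sigma,\ \partial\sigma\ast\mathrm{lk}\,\sigma).
\end{equation*}

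\textbf{Step 2: compute the pair via the join.} The complex $\sigma\ast\mathrm{lk}\,\sigma$ is a cone, hence contractible. The long exact sequence of the pair therefore gives
\begin{equation*}
H_i(X,X-b)\cong \tilde H_{i-1}(\partial\sigma\ast\mathrm{lk}\,\sigma).
\end{equation*}
Now $\partial\sigma\cong S^{k-1}$, and the join with a sphere is an iterated suspension, $S^{k-1}\ast L\simeq \Sigma^{k}L$. Hence
\begin{equation*}
\tilde H_{i-1}(\partial\sigma\ast L)\cong \tilde H_{i-1-k}(L), \qquad L=\mathrm{lk}\,\sigma .
\end{equation*}
The degenerate case $k=0$ must be handled separately: here $\partial\sigma=\emptyset$ and the pair is $(v\ast L, L)$. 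The same formula still holds, provided we use reduced homology with the convention $\tilde H_{-1}(\emptyset)=\mathbb{Z}$.

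\textbf{Step 3: conclude.} Combining the two steps, $\tilde H_{j}(\mathrm{lk}\,\sigma)\cong H_{j+k+1}(X,X-b)$. By the homology manifold hypothesis this group is $\mathbb{Z}$ exactly when $j=n-k-1$ and vanishes otherwise. If the link is non-empty, this is precisely the statement that $\mathrm{lk}\,\sigma$ has the homology of $S^{n-k-1}$. Moreover, links in a simplicial complex are again subcomplexes, and the link of a simplex $\tau$ inside $\mathrm{lk}\,\sigma$ is $\mathrm{lk}_{\varDelta}(\sigma\cup\tau)$. So if the intended definition of ``homology sphere'' also requires the links within $\mathrm{lk}\,\sigma$ to have the correct homology, this follows by applying the same computation to $\sigma\cup\tau$, giving dimension $n-|\sigma\cup\tau|$, which is as required.

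\textbf{Main obstacle.} The delicate point is Step 1: justifying the excision and the deformation retraction rigorously. This means choosing the open star as an open neighbourhood and checking that its closure minus $b$ retracts radially onto $\partial\sigma\ast\mathrm{lk}\,\sigma$. I would do this using the explicit barycentric coordinates of points of $\sigma\ast \mathrm{lk}\,\sigma$.
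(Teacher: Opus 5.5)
Your argument is correct. Excising to the closed star $\sigma\ast\mathrm{lk}\,\sigma$, retracting radially onto $\partial\sigma\ast\mathrm{lk}\,\sigma$ and using the $k$-fold suspension gives $H_i(X,X-b)\cong\tilde H_{i-k-1}(\mathrm{lk}\,\sigma)$, which is the standard route (Munkres' Lemma 63.1) behind the cited Theorem 63.2; the paper gives no proof of its own and only cites this result, so your proposal reconstructs the argument being invoked.
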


\begin{cor}[\cite{Sta11} p.310]\label{interval}
Let $P$ be a finite poset. If $\varDelta(P)$ is a homology $n$-manifold, 
then any non-empty open interval is a homology sphere.
\end{cor}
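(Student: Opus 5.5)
The plan is to identify each open interval $(x,y)$ with a link in $\varDelta(P)$ and then apply Proposition \ref{link sphere}. Fix $x<y$ in $P$ with $(x,y)\neq\emptyset$. Choose a maximal chain $c$ of $P$ through $x$ and $y$, and split it into three pieces:
\begin{itemize}
\item $c_{<x}$, the elements of $c$ below $x$;
\item $c_{>y}$, the elements of $c$ above $y$;
\item the remaining interior elements, which lie in $(x,y)$.
\end{itemize}
Set $\sigma=c_{<x}\cup\{x,y\}\cup c_{>y}$. This is a chain, hence a simplex of $\varDelta(P)$.

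Next we compute $\mathrm{lk}_{\varDelta(P)}(\sigma)$ directly from the definition. A chain $\tau$ disjoint from $\sigma$ with $\tau\cup\sigma$ a chain must have every element comparable to all elements of $\sigma$. So each element of $\tau$ lies in one of three places:
\begin{itemize}
\item in $(x,y)$;
\item below $\min c$ but in no gap of $c_{<x}$, which is impossible because $c$ is maximal;
\item between consecutive elements of $c_{<x}$ or of $c_{>y}$, which is also impossible because $c$ is maximal, so consecutive elements are covering relations.
\end{itemize}
The one subtlety is an element below the minimum of the chain or above its maximum. Such an element cannot exist, since the maximal chain starts at a minimal element and ends at a maximal element. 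Therefore $\mathrm{lk}(\sigma)=\varDelta((x,y))$, and this link is non-empty by assumption.

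Finally, we apply Proposition \ref{link sphere}. Since $\varDelta(P)$ is a homology $n$-manifold, the non-empty link of the simplex $\sigma$ is a homology sphere of dimension $n-\dim\sigma-1$. Hence $\varDelta(x,y)$ is a homology sphere.

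If one wants to avoid choosing a maximal chain, an alternative is the standard join decomposition: the link of the chain $\{x,y\}$ is $\varDelta(P_{<x})\ast\varDelta((x,y))\ast\varDelta(P_{>y})$. One would then use the fact that a join is a homology sphere only if each factor is. I regard that factor argument as the main obstacle in this alternative, which is why I choose the maximal chain approach. In that approach the only point requiring care is the verification that maximality of $c$ forces the link to be exactly $\varDelta((x,y))$, and that check is elementary.
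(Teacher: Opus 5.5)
Your proof is correct and follows the paper's argument. The paper likewise takes maximal chains below $x$ and above $y$, forms a simplex from them, identifies its link with $\varDelta(x,y)$, and applies Proposition \ref{link sphere}; your version correctly includes $x$ and $y$ in $\sigma$, which the paper's displayed chain omits.

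One small gap in your case analysis: you should also list the gaps between $\max c_{<x}$ and $x$ and between $y$ and $\min c_{>y}$. They are excluded by the same maximality argument: an element comparable to all of $\sigma$ but not in $(x,y)$ is comparable to all of $c$, hence lies in $c$, hence lies in $\sigma$.
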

\begin{proof}
For a non-empty open interval $(x,y)$ in $P$, 
take a maximal chain of $P_{\leq x}$ by $x_0<\cdots<x_i<x$, and a maxial chain in $P_{\geq y}$ by $y<y_{1}<\cdots < y_{j}$.
Let $\sigma$ be a simplex of $\varDelta(P)$ represented by a chain $x_0<\cdots < x_i < y_1 < \cdots <y_j$, 
then the link of $\sigma$ is equal to $\varDelta(x,y)$;
hence Proposition \ref{link sphere} yields that $\varDelta(x,y)$ is a homology $n-k-1$ sphere.
\end{proof}

If $\varDelta(P)$ is a homology manifold, 
then the following proposition tells us that the magnitude homology of $P$ is diagonal and torsion-free.

\begin{prop}[cf. \cite{Iva24}, Theorem 7.8]\label{manifold diagonality}
Let $(P,r)$ be a finite graded poset. 
If $\varDelta(P)$ is a homology $n$-manifold, then the magnitude homology of $P$ is diagonal and torsion free.
In particular,
\begin{equation}\label{mfd magnitude}
\MH_{i}^{k}(P) \cong \begin{cases}
\mathbb{Z}^{\#\{(x,y)\mid r(y)-r(x)=k\}} & (i=k),\\
0 & (i \neq k).
\end{cases}
\end{equation}
\end{prop}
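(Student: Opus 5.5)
The plan is to read off the magnitude homology interval by interval from the decomposition of Proposition \ref{decomposition}, and to show that every open interval of $P$ contributes exactly one copy of $\mathbb{Z}$, placed on the diagonal.

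For $k=0$ the group is $\mathbb{Z}^{\#P}$ concentrated in degree $0$, so there is nothing to prove. Fix $k\geq 1$ and a pair $x\leq y$ with $r(y)-r(x)=k$; we must compute $\tilde{H}_{n-2}(\varDelta(x,y))$ for each such pair. We distinguish two cases.

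\emph{Case $k=1$.} Here $x\prec y$ and the open interval $(x,y)$ is empty. Its order complex is the empty simplicial complex, whose reduced homology is $\mathbb{Z}$ in degree $-1$ and $0$ elsewhere (with the augmented chain complex convention already used in the proof of Proposition \ref{decomposition}). This gives a single $\mathbb{Z}$ in $\MH^1_1$. The direct computation agrees: $\MC^{1}_{\ast}(x,y)$ is $\mathbb{Z}\langle(x,y)\rangle$ in degree $1$ and zero otherwise.

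\emph{Case $k\geq 2$.} Now $(x,y)$ is nonempty, since $P$ is graded. By Corollary \ref{interval}, $\varDelta(x,y)$ is a homology sphere, and I need its dimension. The proof of that corollary exhibits $\varDelta(x,y)$ as the link of a simplex $\sigma$ formed by a maximal chain below $x$ together with a maximal chain above $y$. Since $P$ is graded, every maximal chain of $(x,y)$ has $k-1$ elements, so $\varDelta(x,y)$ is pure of dimension $k-2$. A homology sphere that is a pure $(k-2)$-dimensional complex is a homology $(k-2)$-sphere. Hence $\tilde{H}_{j}(\varDelta(x,y))\cong\mathbb{Z}$ for $j=k-2$ and $0$ otherwise, and the contribution $\tilde{H}_{n-2}(\varDelta(x,y))$ is $\mathbb{Z}$ when $n=k$ and vanishes otherwise.

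Summing over the pairs $(x,y)$ with $r(y)-r(x)=k$, as in Proposition \ref{decomposition}, yields the formula \eqref{mfd magnitude}. Diagonality and torsion-freeness both follow directly from this formula.

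The main obstacle is the dimension bookkeeping in the second case. Proposition \ref{link sphere} gives a homology $(n-m-1)$-sphere for an $m$-simplex in an $n$-manifold, which is consistent with the combinatorics only because $\varDelta(P)$ is pure of dimension equal to the rank of $P$. The cleanest route is to avoid matching indices with the manifold dimension and to use purity of the interval directly. A homology sphere has nonvanishing reduced homology exactly in its own dimension, and a pure $(k-2)$-complex with the homology of a sphere has that sphere dimension equal to $k-2$: its top homology is nonzero by the link property, so the sphere dimension cannot be smaller than $k-2$, and it cannot exceed the dimension of the complex.

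Finally, a small remark should justify that the empty complex counts as the homology $(-1)$-sphere, so that the case $k=1$ fits the same pattern.
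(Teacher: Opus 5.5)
Your proposal is correct and takes the same route as the paper, whose proof just combines Corollary \ref{interval} with the decomposition \eqref{m decomposition}; your separate treatment of $k=1$ (empty interval, $\tilde{H}_{-1}=\mathbb{Z}$) and your identification of the sphere dimension as $k-2$ fill in details the paper leaves implicit. The dimension is most easily read off from Proposition \ref{link sphere}: the simplex $\sigma$ built from maximal chains of $P_{\le x}$ and $P_{\ge y}$ has $(r(x)+1)+(n-r(y)+1)$ vertices, hence dimension $n-k+1$, so its link $\varDelta(x,y)$ is a homology $(k-2)$-sphere, whereas purity of $\varDelta(x,y)$ alone would not determine the sphere dimension.
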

\begin{proof}
Use Corollary \ref{interval} and \eqref{m decomposition}.
\end{proof}

The following proposition implies that the diagonality and torsion-freeness of magnitude homology can be collapsed simply by adding a maximum and a minimum elements.
Though it is a straightforward extension of \cite[Theorem 7.8]{IM24} and proof is essentially identical to that of \cite[Theorem 7.8]{IM24}; we include it for completeness.
\begin{prop}[cf.\cite{KY21} Corollary 5.12, \cite{Iva24} Theorem 7.8]\label{magnitude of manifold2}
Under the same assumption as above, 
let $\hat{P}$ be $P\cup \{\hat{0},\hat{1}\}$.
Then the magnitude homology of $\hat{P}$ is
\begin{equation}\label{mfd magnitude 2}
\MH^{k}_{i}(\hat{P})\cong \begin{cases}
\tilde{H}_{i-2}(\varDelta(P)) & (k=r(P) + 2),\\
\mathbb{Z}^{\#\{(x,y)\mid x\leq y, r(y)-r(x)=k\}} & (k\neq r(P) +2,\ i=k),\\
0 & (\mathrm{otherwise}).
\end{cases}
\end{equation}
\end{prop}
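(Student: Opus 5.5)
We use the decomposition of Proposition \ref{decomposition} applied to the graded poset $\hat{P}=P\cup\{\hat{0},\hat{1}\}$. Its rank function is $\hat{r}(\hat{0})=0$, $\hat{r}(x)=r(x)+1$ for $x\in P$, and $\hat{r}(\hat{1})=r(P)+2$. Then $\MH^{k}_{i}(\hat{P})$ is the direct sum, over pairs $x\leq y$ in $\hat{P}$ with $\hat{r}(y)-\hat{r}(x)=k$, of $\tilde{H}_{i-2}(\varDelta(x,y))$; in degree $k=0$ we get $\mathbb{Z}^{\#\hat{P}}$, consistent with the middle line of \eqref{mfd magnitude 2}. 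The plan is to classify the intervals $(x,y)$ of $\hat{P}$ into four types and compute the reduced homology of each order complex.

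\textbf{Type 1: $x,y\in P$.} Here $(x,y)$ is an open interval of $P$. If it is nonempty, Corollary \ref{interval} makes it a homology sphere. We need its dimension to be $k-2$: since $\varDelta(P)$ is pure (Proposition \ref{link sphere} gives purity of a triangulated homology manifold, so $P$ is graded), $\varDelta(x,y)$ has dimension $r(y)-r(x)-2$. Hence $\tilde{H}_{i-2}$ is $\mathbb{Z}$ for $i=k$ and vanishes otherwise. If the interval is empty, i.e.\ $x\prec y$ or $x=y$, we get the contribution $\mathbb{Z}$ in $i=k=1$ (respectively $k=0$) directly from the $n=1$ case in the proof of Proposition \ref{decomposition}.

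\textbf{Type 2: $x=\hat{0}$, $y\in P$.} Then $(\hat{0},y)=P_{<y}$, and $\varDelta(P_{<y})$ is the link in $\varDelta(P)$ of the simplex $\sigma$ given by a maximal chain of $P_{>y}$ together with $y$, joined with $\ldots$ — more simply, take $\sigma$ to be a maximal chain of $P_{>y}$ and compare with $\varDelta(P_{\leq y})$. The cleanest route is to show directly that $\varDelta(P_{<y})$ is a homology sphere of dimension $r(y)-1=k-2$. For this, choose a maximal chain $\tau$ in $P_{>y}$ and note that $\mathrm{lk}_{\varDelta(P)}(\tau\cup\{y\})=\varDelta(P_{<y})$, because every chain below $y$ extends $\tau\cup\{y\}$ and $\tau$ is maximal above. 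Then apply Proposition \ref{link sphere}, counting dimensions via purity. The case where $y$ is minimal (the interval is empty, $k=1$) again gives $\mathbb{Z}$ in degree $i=1$.

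\textbf{Type 3: $x\in P$, $y=\hat{1}$.} This is symmetric to Type 2, using $P_{>x}$.

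\textbf{Type 4: $x=\hat{0}$, $y=\hat{1}$.} This is the only pair with $k=r(P)+2$, and $(\hat{0},\hat{1})=P$, giving $\tilde{H}_{i-2}(\varDelta(P))$.

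Combining the four types, each pair with $k\neq r(P)+2$ contributes exactly one $\mathbb{Z}$ in degree $i=k$, and the pair $(\hat{0},\hat{1})$ gives the first line of \eqref{mfd magnitude 2}. The count in the middle line should therefore be read over all pairs $x\leq y$ in $\hat{P}$.

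The main obstacle is the link identification in Types 2 and 3, together with the dimension bookkeeping. This requires that the relevant links are nonempty and of the correct dimension, which relies on $\varDelta(P)$ being pure of dimension $n=r(P)$; I would first verify that purity follows from the homology manifold condition via local homology.
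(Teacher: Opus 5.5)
Your proposal is correct and takes essentially the same route as the paper. Both decompose via Proposition \ref{decomposition} and handle intervals inside $P$ by Corollary \ref{interval}. Both identify $(\hat{0},y)=P_{<y}$ (and dually $(x,\hat{1})=P_{>x}$) with the link of $\{y\}$ together with a maximal chain of $P_{>y}$, then apply Proposition \ref{link sphere}, and both let $(\hat{0},\hat{1})=P$ give the exceptional grading. Reading the count over pairs of $\hat{P}$ and the exceptional grading as $k=r(P)+2$ is right; the paper's proof writes $\MH^{r(P)}$, which is a typo.
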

\begin{proof}
If $x,y\in \hat{P}$ are neither $\hat{0}$ nor $\hat{1}$, it is the same as Proposition \ref{mfd magnitude}.\\
For an open interval $(\hat{0},\hat{1})=P$, 
$\MH^{r(P)}_{i}(\hat{P})\cong \tilde{H}_{i-2}(\varDelta(\hat{0},\hat{1}))=\tilde{H}_{i-2}(\varDelta(P))$.\\
For any $x\in \hat{P}\setminus \{\hat{1}\}$, an open interval $(\hat{0},x)$ is isomorphic to $P_{< x}$.
Let $\sigma\in \varDelta(P)$ be a simplex indexed by a maximal chain $x<x_1<\cdots<x_j$ in $P_{\geq x}$, 
then $\varDelta(P_{<x}) = \mathrm{lk}_{\varDelta(P)}(\sigma)$ 
and Proposition \ref{link sphere} implies that $\varDelta(x,y)$ is a $r(y)-r(x)-2$ dimensional homology sphere.
The same applies for an open interval of type $(y,\hat{1})$.
Now Proposition \ref{decomposition} yields \eqref{mfd magnitude 2}.
\end{proof}

\subsection{Filtered order complexes of posets associated with homology manifolds}
\begin{thm}\label{closed manifold}
Let $(P,r)$ be a finite graded poset, and suppose that $\varDelta(P)$ is a triangulation of homology $n$-manifold $X$. 
Then for $0\leq k\leq n$, we have the following isomorphisms of the homology groups of $\varDelta^{(k)}(P)$, and the inequality for the $k$-th Betti number of $\varDelta^{(k)}(P)$: 
\begin{gather}
  H_{i}(\varDelta^{(k)}(P))\cong H_{i}(X) \quad (i\leq k-1),\\
n-k\leq \beta_{k}(\varDelta^{(k)}(P)) \leq \#\{(x,y)\mid x\leq y,\ r(y)-r(x)=k\} \label{mfd rank}.
\end{gather}
\end{thm}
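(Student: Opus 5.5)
The isomorphisms and the upper bound follow directly from Theorem \ref{diagonal order complex}; the lower bound needs a separate argument.

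\textbf{Isomorphisms.} By Proposition \ref{manifold diagonality}, the magnitude homology of $P$ is diagonal and torsion-free. Theorem \ref{diagonal order complex} then gives $H_i(\varDelta^{(k)}(P))\cong H_i(\varDelta(P))$ for $i\le k-1$. Since $\varDelta(P)$ triangulates $X$, this is $H_i(X)$.

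\textbf{Upper bound.} For the right-hand inequality of \eqref{mfd rank} I will use the injection $H_k(\varDelta^{(k)}(P))\hookrightarrow \MH^k_k(P)$ from the proof of Theorem \ref{diagonal order complex}. By \eqref{mfd magnitude}, $\MH^k_k(P)$ is free of rank $\#\{(x,y)\mid x\le y,\ r(y)-r(x)=k\}$. Equivalently, in \eqref{rank} every $|\mu(x,y)|$ equals $1$: by Corollary \ref{interval} each open interval is a homology sphere, so its reduced Euler characteristic is $\pm1$.

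\textbf{Lower bound: reduction to the pieces.} I plan to use the decomposition of Proposition \ref{rank selection}, $\varDelta^{(k)}(P)=\bigcup_{i=0}^{n-k}\varDelta(P_{[i,i+k]})$. Here I use that $P$ has rank $n$, because $\varDelta(P)$ is a pure $n$-dimensional manifold triangulation.

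The pieces have no $k$-simplex in common, because distinct $i$ give distinct rank sets $\{i,\dots,i+k\}$. Moreover $\varDelta^{(k)}(P)$ is $k$-dimensional, so $H_k$ equals the group of $k$-cycles $Z_k$. A $k$-cycle in one piece is a $k$-cycle in the union. Cycles supported in different pieces have disjoint supports, so their sum is direct. This yields an injection
\begin{equation*}
\bigoplus_{i=0}^{n-k} H_k(\varDelta(P_{[i,i+k]}))\hookrightarrow H_k(\varDelta^{(k)}(P)).
\end{equation*}
This is the direct-sum refinement of the injection in Proposition \ref{rank selection}. It therefore suffices to show that $H_k(\varDelta(P_{[i,i+k]}))\neq 0$ for at least $n-k$ of the $n-k+1$ indices $i$.

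\textbf{Main obstacle: nonvanishing for the pieces.} I expect proving $H_k(\varDelta(P_{[i,i+k]}))\neq 0$ to be the hardest step. My first attempt is to construct an explicit nonzero $k$-cycle, modelled on the fundamental class.

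Consider a $(k-1)$-face of a saturated chain $x_i\prec\cdots\prec x_{i+k}$ obtained by omitting an interior rank $j$. It lies in exactly two $k$-simplices of the piece, because the interval $(x_{j-1},x_{j+1})$ is a $0$-dimensional homology sphere, i.e. two points. The faces omitting an end rank are the problem. Their completions are governed by $P_{<x_{i+1}}$ or $P_{>x_{i+k-1}}$, and these are links in $\varDelta(P)$, hence homology spheres by Proposition \ref{link sphere}. I would try to combine the orientation signs coming from $X$ with these link spheres to cancel the boundary over the end ranks.

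If this fails, my fallback uses two facts: homology manifolds are Buchsbaum, and rank selection preserves the Buchsbaum property. A pure $k$-dimensional Buchsbaum complex has homology concentrated in degree $k$ apart from the homology of $X$ in low degrees. I would then show that its top Betti number is positive by an Euler characteristic computation, using that all $|\mu|=1$. Failing both, I would settle for a downward induction on $k$ via the long exact sequence \eqref{LES}. From the surjection $H_k(\varDelta^{(k)}(P))\to H_k(\varDelta^{(k+1)}(P))$ and \eqref{mfd rank} at level $k+1$, one checks that the rank must increase by at least one at each step.
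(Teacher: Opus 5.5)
Your proofs of the isomorphisms and of the upper bound coincide with the paper's. Your reduction of the lower bound to the pieces is also correct: the pieces share no $k$-simplices and $H_k=Z_k$ in the $k$-dimensional complex, so $\bigoplus_i H_k(\varDelta(P_{[i,i+k]}))\hookrightarrow H_k(\varDelta^{(k)}(P))$. This is the independence step the paper uses.

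The gap is the step you flag as the main obstacle. You never prove that $H_k(\varDelta(P_{[i,i+k]}))\neq 0$, and none of your three routes establishes it.
\begin{itemize}
\item On the whole piece there is no fundamental-class-type cycle to imitate. A $(k-1)$-face missing the end rank $i$ has as cofaces all rank-$i$ elements below $x_{i+1}$, typically more than two. You give no mechanism for the cancellation, and no global signs exist when $X$ is non-orientable.
\item The Buchsbaum fallback relies on an unproved claim about the homology of the piece. It also leaves the positivity computation undone.
\item The exact-sequence induction fails. Since $\MH^{k+1}_k(P)=0$ and $H_k(\varDelta^{(k+1)}(P))\cong H_k(X)$, the sequence gives
\[
\beta_k(\varDelta^{(k)}(P))=\beta_k(X)+\#\{(x,y)\mid r(y)-r(x)=k+1\}-\beta_{k+1}(\varDelta^{(k+1)}(P)).
\]
So the upper bound at level $k+1$ yields only $\beta_k(\varDelta^{(k)}(P))\ge \beta_k(X)$. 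The lower bound at level $k+1$ enters with the wrong sign and produces an \emph{upper} bound at level $k$. No increase by one follows.
\end{itemize}

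The missing idea, which is the paper's, is to find a homology $k$-sphere sitting \emph{inside} each piece: the link of a chain occupying every rank outside $[i,i+k]$. Concretely, let $k\le n-1$.
\begin{itemize}
\item For $1\le i\le n-k-1$, choose $x<y$ with $r(x)=i-1$ and $r(y)=i+k+1$. Then $\varDelta(x,y)\subseteq\varDelta(P_{[i,i+k]})$ is a homology $k$-sphere by Corollary \ref{interval}, which you already used for the upper bound.
\item For $i=0$ (resp.\ $i=n-k$), take $\varDelta(P_{<y})$ with $r(y)=k+1$ (resp.\ $\varDelta(P_{>x})$ with $r(x)=n-k-1$). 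It is the link of a maximal chain of $P_{\ge y}$ (resp.\ $P_{\le x}$), which is an $(n-k-1)$-simplex. Hence it is a homology $k$-sphere by Proposition \ref{link sphere}.
\end{itemize}
In each case the fundamental cycle is a nonzero $k$-cycle in a $k$-dimensional complex, hence a nonzero class in $H_k(\varDelta(P_{[i,i+k]}))$. Within such a subcomplex your own coface count succeeds at the end ranks as well: for instance, the cofaces of a face missing rank $i$ are the two points of $(x,x_{i+1})$.

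Combined with your injection, this gives $\beta_k(\varDelta^{(k)}(P))\ge n-k+1$ for $k\le n-1$. For $k=n$ the bound $n-k=0$ holds trivially.
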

\begin{proof}
$P$ is diagonal by Proposition \ref{mfd magnitude}, and Corollary \ref{diagonal order complex} implies that 
$H_{i}(\varDelta^{(k)}(P))\cong H_{i}(X)\ (i\leq k-1)$. 
Also, Proposition \ref{manifold diagonality} implies the right-hand side of the inequality \eqref{mfd rank}.
Take $\sigma=(x_i<x_{i+1}<\cdots<x_{i+k-1})\in \varDelta(P)$ so that the rank of an element $x_i$ is $i$, 
then $\mathrm{lk}\ \sigma\in \varDelta(P_{[i,i+k]})$ 
and Proposition \ref{link sphere} implies that $\mathrm{lk}\ \sigma$ is a nontrivial cycle of $H_{k}(\varDelta(P_{[i,i+k]}))$.
Proposition \ref{rank selection} implies that $\mathrm{lk}\ \sigma$ is also a nontrivial cycle of $H_{k}(\varDelta^{(k)}(P))$.
Given that $\varDelta(P_{[i,i+k]})\cap\varDelta(P_{[i+1,i+k+1]})=\varDelta(P_{[i+1,i+k]})$, if we take one $k$-dimensional cycle from each of  
$\varDelta(P_{[0,k]}),\ldots,\varDelta(P_{[n-k,n]})$, then these cycles are independent.
Thus, we obtain the left-hand side of the inequality \eqref{mfd rank}.
\end{proof}

\begin{cor}\label{closed manifold2}
Let $(P,r)$ be a graded poset, and suppose $\varDelta(P)$ gives a triangulation of a homology $n$-manifold $X$ without boundary.
Then for $0\leq k \leq n$, $\varDelta^{(k)}(P)$ is not contractible.
\end{cor}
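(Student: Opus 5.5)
The plan is to split into the cases $k\leq n-1$ and $k=n$, and in each case exhibit a nontrivial homology group, so that $\varDelta^{(k)}(P)$ cannot be contractible.

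\emph{Case $k\leq n-1$.} Here Theorem \ref{closed manifold} gives $\beta_{k}(\varDelta^{(k)}(P))\geq n-k\geq 1$. Hence $H_k(\varDelta^{(k)}(P))\neq 0$ with $k\geq 0$, and a contractible complex has vanishing homology in all positive degrees. When $k=0$ this degree is not positive, so we argue separately. The complex $\varDelta^{(0)}(P)$ is the discrete set $P$. Since $n\geq 1$, every maximal chain has at least two elements, so $\#P\geq 2$, and a discrete space with at least two points is disconnected, hence not contractible. (The inequality $\beta_0\geq n\geq 1$ alone would not suffice, because $\beta_0=1$ for a point; this is why the cardinality count is needed.)

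\emph{Case $k=n$.} Here $\varDelta^{(n)}(P)=\varDelta(P)$ triangulates $X$, so it suffices to show that a closed homology $n$-manifold is not contractible. We would read "closed" as meaning compact, which holds automatically since $P$ is finite. I expect this to be the main obstacle, because orientability is not assumed.

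For $n\geq 1$, I would use $\mathbb{Z}/2$ coefficients. A finite triangulated homology $n$-manifold without boundary has the property that every $(n-1)$-simplex lies in exactly two $n$-simplices, because by Proposition \ref{link sphere} its link is a homology $0$-sphere, that is, two points. Therefore the sum of all $n$-simplices is a mod $2$ cycle. It is not a boundary, since there are no $(n+1)$-simplices, so $H_n(\varDelta(P);\mathbb{Z}/2)\neq 0$. By the universal coefficient theorem, a contractible complex has $\tilde H_*(\,\cdot\,;\mathbb{Z}/2)=0$, which gives the contradiction.

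For $n=0$, $X$ is the finite discrete set $P$. The case $\#P=1$ (a point) must be excluded, either by the convention that $\varDelta(P)$ is a homology manifold of positive dimension or by explicit assumption; I would note this degenerate case in a remark.
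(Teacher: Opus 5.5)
Your proof is correct and has the same skeleton as the paper's. For $k\le n-1$ you both appeal to Theorem~\ref{closed manifold}, and for $k=n$ you both reduce to showing that $X$ itself is not contractible.

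The real difference is in that last step. The paper cites \cite[Corollary 65.5]{Mun84b}: orientable gives $H_n(X)\cong\mathbb{Z}$, non-orientable gives $\mathbb{Z}/2$-torsion in $H_{n-1}(X)$. This is quick, but it rests on the orientability dichotomy for compact connected homology manifolds and tacitly assumes $X$ is connected. You instead build the mod~$2$ fundamental class directly from Proposition~\ref{link sphere}. That argument is self-contained, needs no orientability discussion, and works verbatim for disconnected $X$. It does tacitly use that $\varDelta(P)$ is pure of dimension $n$, so that $r(P)=n$ and every $(n-1)$-simplex has nonempty link. This follows from gradedness together with local homology at an interior point of a top simplex, but you should state it.

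Your handling of edge cases is also sharper than the paper's:
\begin{itemize}
\item The paper's one-line use of Theorem~\ref{closed manifold} does not by itself settle $k=0$, $n=1$, since the bound $\beta_0\ge n$ gives only $\beta_0\ge 1$. Your count $\#P\ge 2$ repairs this.
\item You are right that $n=0$, $\#P=1$ is a genuine exception to the statement as written. A point is a homology $0$-manifold, and then $\varDelta^{(0)}(P)$ is a point. The paper's argument also fails there, since $H_0(X)\cong\mathbb{Z}$ does not rule out contractibility.
\end{itemize}
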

\begin{proof}
$\varDelta^{(n)}(P)=\varDelta(P)$ is not contractible because if $X$ is orientable, $H_{n}(X)\cong \mathbb{Z}$ 
and if $X$ is not orientable, then $H_{n-1}(X)$ has a torsion subgroup of order $2$ (see, \cite[Corollary 65.5]{Mun84b}).
If $k\leq n-1$, Theorem \ref{closed manifold} implies that 
$\varDelta^{(k)}(P)$ is not contractible.
\end{proof}

\begin{rmk}
If $\varDelta(P)$ is a triangulation of a manifold with boundary, $\varDelta^{(k)}(P)$ can be contractible.
For example, let $P$ be a chain of length $n$, then $\varDelta(P)$ is a triangulation of a $n$-dimensional ball, 
and for $1\leq k \leq n$, $\varDelta^{(k)}(P)$ are all contractible by Example \ref{chain}.
\end{rmk}

The following proposition implies that even if the diagonality of the magnitude homology collapses by adding the maximam and minimum elements, 
the homology of subcomplexes have similar behaviors.
\begin{prop}
Let $(P,r)$ be a graded poset such that $\varDelta(P)$ gives a triangulation of a homology $n$-manifold.
For $1\leq k\leq n+1$, the homology groups of subcomplexes $\varDelta^{(k)}(\hat{P})$ are as follows:
\begin{align}\label{hat}
H_{i}(\varDelta^{(k)}(\hat{P})) & \cong \tilde{H}_{i-1}(X) & (1\leq i \leq k-1),\\
H_{0}(\varDelta^{(k)}(\hat{P}))& \cong \mathbb{Z} & (i=0).
\end{align}
\end{prop}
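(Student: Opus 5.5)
The plan is to repeat the argument of Theorem \ref{diagonal order complex} for $\hat{P}$. The only non-diagonal grading, $k=n+2$, sits exactly at the last step of the filtration, so it can be avoided. The top remaining stage $\varDelta^{(n+1)}(\hat{P})$ is then identified directly as a suspension of $\varDelta(P)$.

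First, fix ranks. $\varDelta(P)$ is an $n$-dimensional homology manifold, so $P$ has rank $n$. In $\hat{P}$ the element $\hat{0}$ has rank $0$, the elements of $P$ have ranks $1,\ldots,n+1$, and $\hat{1}$ has rank $n+2$. By Proposition \ref{magnitude of manifold2}, $\MH^{j}_{i}(\hat{P})=0$ for $i\neq j$ whenever $j\neq n+2$. Only the pair $(\hat{0},\hat{1})$ has rank difference $n+2$, so this is the only grading where non-diagonal classes can appear.

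Second, run the long exact sequence \eqref{LES} for the pair $(\varDelta^{(j+1)}(\hat{P}),\varDelta^{(j)}(\hat{P}))$ with $j+1\leq n+1$. By Proposition \ref{relative homology}, the relative groups $H_{i}$ and $H_{i+1}$ vanish unless $i=j+1$ or $i+1=j+1$. Hence the inclusion induces
\[
H_{i}(\varDelta^{(j)}(\hat{P}))\cong H_{i}(\varDelta^{(j+1)}(\hat{P}))\quad (i\leq j-1).
\]
For $i\leq k-1$ and $k\leq j$ we have $i\leq j-1$, so chaining these isomorphisms for $j=k,k+1,\ldots,n$ gives
\[
H_{i}(\varDelta^{(k)}(\hat{P}))\cong H_{i}(\varDelta^{(n+1)}(\hat{P}))\quad (i\leq k-1).
\]
The grading $n+2$ never enters this chain, which is the point to check carefully. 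The case $k=n+1$ needs no steps at all.

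Third, identify $\varDelta^{(n+1)}(\hat{P})$. A chain $x_0<\cdots<x_l$ of $\hat{P}$ has rank difference at most $n+1$ exactly when it does not contain both $\hat{0}$ and $\hat{1}$. So $\varDelta^{(n+1)}(\hat{P})=(\hat{0}\ast\varDelta(P))\cup(\hat{1}\ast\varDelta(P))$, where the two cones meet in $\varDelta(P)$. This is the suspension of $\varDelta(P)\cong X$, and $X$ is nonempty. The Mayer--Vietoris sequence for the two contractible cones then gives $H_{i}\cong\tilde{H}_{i-1}(X)$ for $i\geq1$ and $H_0\cong\mathbb{Z}$. Combined with the second step, this yields the statement.

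The $i=0$ case also follows directly, since $\varDelta^{(1)}(\hat{P})$ is connected through $\hat{0}$ and all the later stages contain it. The main obstacle is only bookkeeping: making sure the range $i\leq k-1$ never requires relative homology in grading $n+2$.
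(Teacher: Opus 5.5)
Your proof is correct. Your second step matches the paper's argument exactly: you chain the isomorphisms from the long exact sequences of the pairs $(\varDelta^{(j+1)}(\hat{P}),\varDelta^{(j)}(\hat{P}))$ for $j+1\leq n+1$, where $\MH^{j+1}_{\ast}(\hat{P})$ is diagonal.

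The difference is in how you compute $H_{\ast}(\varDelta^{(n+1)}(\hat{P}))$. The paper uses the long exact sequence of the top pair $(\varDelta^{(n+2)}(\hat{P}),\varDelta^{(n+1)}(\hat{P}))$. Since $\varDelta^{(n+2)}(\hat{P})=\varDelta(\hat{P})$ is a cone, the connecting map identifies the relative group with $\tilde{H}_{i-1}(\varDelta^{(n+1)}(\hat{P}))$. That relative group is the off-diagonal magnitude homology $\MH^{n+2}_{i}(\hat{P})\cong\tilde{H}_{i-2}(X)$ of Proposition~\ref{magnitude of manifold2}.

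You instead observe that $\varDelta^{(n+1)}(\hat{P})$ consists precisely of the chains not containing both $\hat{0}$ and $\hat{1}$. Hence it is the suspension $(\hat{0}\ast\varDelta(P))\cup(\hat{1}\ast\varDelta(P))$. Your route is more elementary and uses only the diagonal part of Proposition~\ref{magnitude of manifold2}. It also gives slightly more: for $k=n+1$, $\varDelta^{(n+1)}(\hat{P})$ is homeomorphic to the suspension of $X$, not merely homologically equivalent to it.

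The paper's route is chosen to illustrate the theme of the proposition. Its point is that the single off-diagonal grading $n+2$ of $\MH(\hat{P})$ is what feeds the shifted homology of $X$ into the filtration. The two computations are equivalent: plugging your suspension identification into the same long exact sequence recovers $\MH^{n+2}_{\ast}(\hat{P})\cong\tilde{H}_{\ast-2}(X)$.
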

\begin{proof}
By Proposition \ref{relative homology} and Proposition \ref{mfd magnitude 2}, the long exact sequence of the homology of a pair of subcomplexes $(\varDelta^{(n+2)}(\hat{P}),\varDelta^{(n+1)}(\hat{P}))$, 
we have the following exact sequence of homology groups:
\begin{equation}
\cdots \rightarrow H_{i}(\varDelta^{(n+2)}(\hat{P}))\rightarrow \tilde{H}_{i-2}(X)\rightarrow H_{i-1}(\varDelta^{(n+1)}(\hat{P}))\rightarrow H_{i-1}(\varDelta^{(n+2)}(\hat{P}))\rightarrow \cdots
\end{equation}
Thus we obtain the isomorphism $\tilde{H}_{i-1}(X) \cong H_{i}(\varDelta^{(n+1)}(\hat{P}))$, since $\varDelta^{(n+2)}(\hat{P})=\varDelta(\hat{P})$ is contractible.
Also, we get $H_{i}(\varDelta^{(k)}(\hat{P}))\cong H_{i}(\varDelta^{(k+1)}(\hat{P}))\ (1\leq i \leq k-1)$ for $k\leq n$ by the long exact sequence of a pair $(\varDelta^{(k+1)}(\hat{P}),\varDelta^{(k)}(\hat{P}))$, 
and we have $H_{i}(\varDelta^{(k)}(\hat{P}))\cong H_{i-1}(X)$, hence \eqref{hat} holds.
\end{proof}

\begin{ex}
Let $P_n=\{a^{1}_{1},a^{1}_{2},\ldots,a^{1}_{n},a^{2}_{1},a^{2}_{2},\ldots,a^{2}_{n}\}$
with the partial order generated by $a^{p}_{i}<a^{q}_{i+1} \ (p,q\in \{1,2\})$.
$\varDelta(P_n)$ is a $n$-joins of $0$-sphere $\mathbb{S}^0$, which is homeomorphic to a $n$-sphere.
For $1\leq k \leq n$, 
$\varDelta^{(k)}(P_n)$ is homotopy equivalent to a wedge of $(2n-2k-1)$ copies of $\mathbb{S}^k$.

\begin{figure}[htbp]
\centering
  \begin{tabular}{cc}
    \begin{minipage}{0.3\textwidth}
      \centering

      \begin{tikzpicture}
        
        \coordinate (1) at (-2,0);
        \coordinate (-1) at (2,0);
        \coordinate (2) at (0,2);
        \coordinate (-2) at (0,-2);
        
        \node[left] at (1) {$a^{1}_{1}$};
        \node[below] at (-1) {$a^{2}_{1}$};
        \node[above] at (2) {$a^{1}_{2}$};
        \node[below] at (-2) {$a^{2}_{2}$};

        \draw (1)--(2)--(-1)--(-2)--cycle;

        \foreach\P in {1,-1,2,-2} \fill[black]
        (\P)circle(0.1);
      \end{tikzpicture}
      \caption*{$\varDelta^{(1)}(P_2)$}
    \end{minipage}
  
    \begin{minipage}{0.3\textwidth}
      \centering
      \begin{tikzpicture}
        \coordinate (1) at (2,0,0);
        \coordinate (-1) at (-2,0,0);
        \coordinate (2) at (0,2,0);
        \coordinate (-2) at (0,-2,0);
        \coordinate (3) at (0,0,2);
        \coordinate (-3) at (0,0,-2);

        \node[above] at (1) {$a^{1}_{1}$};
        \node[above] at (-1) {$a^{2}_{1}$};
        \node[right] at (2) {$a^{1}_{2}$};
        \node[below] at (-2) {$a^{2}_{2}$};
        \node[right] at (3) {$a^{1}_{3}$};
        \node[left] at (-3) {$a^{2}_{3}$};

        \draw (1)--(2)--(-1)--(-2)--cycle;
        \draw (2)--(3)--(-2)--(-3)--cycle;

        \foreach\P in {1,-1,2,-2,3,-3} \fill[black]
        (\P)circle(0.1);
      \end{tikzpicture}
      \caption*{$\varDelta^{(1)}(P_3)$}
    \end{minipage}

    \begin{minipage}{0.3\textwidth}
    \begin{tikzpicture}
        \coordinate (1) at (2,0,0);
        \coordinate (-1) at (-2,0,0);
        \coordinate (2) at (0,2,0);
        \coordinate (-2) at (0,-2,0);
        \coordinate (3) at (0,0,2);
        \coordinate (-3) at (0,0,-2);

        \fill[lightgray,draw=black] (1)--(2)--(-1)--(-2)--cycle;
        \draw (2)--(3)--(-2);
        \draw (1)--(3)--(-1);
        \draw[dashed] (2)--(-3)--(-2);
        \draw[dashed] (1)--(-3)--(-1);

        \node[above] at (1) {$a^{1}_{1}$};
        \node[above] at (-1) {$a^{2}_{1}$};
        \node[right] at (2) {$a^{1}_{2}$};
        \node[below] at (-2) {$a^{2}_{2}$};
        \node[right] at (3) {$a^{1}_{3}$};
        \node[left] at (-3) {$a^{2}_{3}$};

        \foreach\P in {1,-1,2,-2,3,-3} \fill[black]
        (\P)circle(0.1);
    \end{tikzpicture}
    \caption*{$\varDelta^{(2)}(P_3)$}
  \end{minipage}
  \end{tabular}
\end{figure}
\end{ex}
\begin{ex}
Let $K$ be a regular CW decomposition of a real projective space $\mathbb{R}P^2$ taken as Figure \ref{fig:RP2}.
Let $P$ be a face poset of $K$, then $\varDelta^{(2)}(P)$ is a triagulation of $\mathbb{R}P^2$, 
$\varDelta^{(1)}(P)$ is homotopy equivalent to a wedge of $12$ copies of $\mathbb{S}^1$.
\begin{figure}[H]
    \centering
    \begin{minipage}{0.45\textwidth}
        \centering
        % Left diagram (Fig. 7.1)
        \begin{tikzpicture}
            \coordinate (0) at (0,0) ;
            \coordinate (1) at (-2,0) ;
            \coordinate (2) at (0,2) ;
            \coordinate (3) at (2,0) ;
            \coordinate (4) at (0,-2) (4) ;

            \fill[lightgray, draw=black](0)--(1)--(2)--cycle;
            \fill[lightgray, draw=black](0)--(2)--(3)--cycle;
            \fill[lightgray, draw=black](0)--(3)--(4)--cycle;
            \fill[lightgray, draw=black](0)--(4)--(1)--cycle;

            \node[above right] at (0) {$0$};
            \node[left] at (1) {$1$};
            \node[above] at (2) {$2$};
            \node[right] at (3) {$3$};
            \node[below] at (4) {$4$};

            \draw[->, >=Stealth] (1) -- (4);
            \draw[->>, >=Stealth] (2) -- (1);
            \draw[->, >=Stealth] (3) -- (2);
            \draw[->>, >=Stealth] (4) -- (3);

            \foreach\P in {0,1,2,3,4} \fill[black]
            (\P)circle(0.1);
        \end{tikzpicture}
        \caption{$K$ is taken by identifying the opposite sites according to their orientation.}
        \label{fig:RP2}
      \end{minipage} \hspace{0.05\textwidth}
    \begin{minipage}{0.45\textwidth}
        \centering
        % Right diagram (Fig. 7.2)
        \begin{tikzpicture}
            \coordinate (0) at (0,0);
            \coordinate (1) at (-2,0);
            \coordinate (2) at (2,0);
            \coordinate (01) at (-3,2);
            \coordinate (12) at (-2,2);
            \coordinate (14) at (-1,2);
            \coordinate (04) at (1,2);
            \coordinate (03) at (2,2);
            \coordinate (02) at (3,2);
            \coordinate (014) at (-2.5,4);
            \coordinate (012) at (-1,4);
            \coordinate (034) at (1,4);
            \coordinate (023) at (2.5,4);

            \draw (0)--(01)--(014)--(14)--(1)--(12)--(012)--(01);
            \draw (0)--(04)--(034)--(12)--(2)--(02)--(023)--(03)--(0)--(02)--(012)--(12);
            \draw (01)--(1)--(03)--(034);
            \draw (023)--(14)--(2)--(04)--(014);

            \foreach\P in {0,1,2,01,12,14,04,03,02,014,012,034,023} \fill[black]
            (\P)circle(0.1);
        \end{tikzpicture}
        \caption{face poset of $K$}
    \end{minipage}
\end{figure}
\end{ex}

\section{Magnitude homology and filtered order complexes of shellable graded posets.}
\subsection{Shellablity of simplicial complexes and posets}
A simplicial complex $\varDelta$ is \textit{pure} or \textit{pure dimensional} of $n$ 
if all its maximal simplices have the same dimension $n$.

\begin{dfn}[\cite{Koz08}, Definition 12.1]\label{shellability}
A simplicial complex $\varDelta$ is called \textit{shellable} 
if its maximal simplices can be arranged in linear order $F_1,\ldots,F_t$ in such a way that 
the subcomplex $(\bigcup_{i=1}^{k-1}F_i)\cap F_{k}$ is pure and $(\mathrm{dim}F_k-1)$-dimensional for 
all $k=2,\ldots,t$. 
An ordering of maximal simplices satisfying the above condition is called a 
\textit{shelling order}. 
The maximal faces whose entire boundary is contained in 
the union of the earlier maximal simplices are called \textit{spanning simplices}.
\end{dfn}

The next theorem is one the most important topological properties of a shellable complex (we refer the reader to \cite[Theorem 12.3]{Koz08}).

\begin{thm}\label{shellable homotopy}
Let $\varDelta$ be a shellable simplicial complex, 
with $F_1,\ldots,F_t$ being the corresponding shelling order of the maximal simplices, 
and $\varSigma$ being the set of spanning simplices. 
Then $\varDelta$ is homotopy equivalent to $\bigvee_{\sigma\in \varSigma}\mathbb{S}^{\mathrm{dim}\ \sigma}$.
\end{thm}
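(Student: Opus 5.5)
We prove Theorem \ref{shellable homotopy} by induction on the number $t$ of maximal simplices, following the standard argument of \cite[Theorem 12.3]{Koz08}. Set $\varDelta_j:=\bigcup_{i=1}^{j}F_i$. We will show that each $\varDelta_j$ is homotopy equivalent to a wedge of spheres, one sphere $\mathbb{S}^{\dim F_i}$ for each spanning simplex $F_i$ with $i\leq j$. The base case is $\varDelta_1=F_1$, which is a simplex and hence contractible. Note that $F_1$ is never spanning, since its boundary is not contained in the empty union.

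For the inductive step, write $\varDelta_j=\varDelta_{j-1}\cup F_j$ and put $B_j:=\varDelta_{j-1}\cap F_j$. By the shelling condition, $B_j$ is a pure $(\dim F_j-1)$-dimensional subcomplex of the boundary $\partial F_j$. There are two cases.

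\emph{Case 1: $B_j\neq\partial F_j$.} Then $F_j$ is not spanning. We show that $B_j$ is contractible, as follows. Since $B_j$ is pure of codimension one in $F_j$, it is a union of some facets of $F_j$, namely those opposite to a set $S$ of vertices of $F_j$, with $S$ nonempty and $S\neq V(F_j)$. Pick a vertex $v\notin S$. Every facet $F_j\setminus\{u\}$ with $u\in S$ contains $v$. Hence $B_j$ is a cone with apex $v$, and so it is contractible. Gluing a contractible complex $F_j$ to $\varDelta_{j-1}$ along the contractible subcomplex $B_j$ does not change the homotopy type; this follows, for instance, from the gluing lemma, or by collapsing $F_j$ onto $B_j$ via a sequence of elementary collapses of the free faces $F_j\setminus\{u\}$ with $u\notin S$. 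Thus $\varDelta_j\simeq\varDelta_{j-1}$.

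\emph{Case 2: $B_j=\partial F_j$.} Then $F_j$ is spanning, and $\varDelta_j$ is obtained from $\varDelta_{j-1}$ by attaching a cell of dimension $\dim F_j$ along its boundary sphere. The main obstacle is to see that this attachment produces a wedge summand rather than a more complicated space. For this we use that the attaching map $\partial F_j\to\varDelta_{j-1}$ is a cellular map into a wedge of spheres, each of dimension at least $\dim F_j$. To justify this, note first that every sphere already present comes from a spanning simplex $F_i$ with $i<j$. Second, since the intersection $B_i$ is pure of dimension $\dim F_i-1$, the dimensions of the $F_i$ are weakly decreasing in any shelling order, which is a standard fact. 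Rather than proving that fact from scratch, we argue directly: choose a maximal tree-like contractible subcomplex, or homotopically collapse everything nonspanning first, so that the attaching map factors through a space in which $\partial F_j$ is null-homotopic. Concretely, reorder so that all spanning simplices are added last; the complex built from the nonspanning simplices alone is contractible by Case 1, which can be checked to remain valid after the reordering. Each spanning simplex is then attached along a map from a sphere into a contractible space, so the attaching map is null-homotopic. Attaching a cell along a null-homotopic map yields the wedge of the previous space with $\mathbb{S}^{\dim F_j}$, since homotopic attaching maps give homotopy equivalent adjunction spaces.

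Completing the induction gives $\varDelta=\varDelta_t\simeq\bigvee_{\sigma\in\varSigma}\mathbb{S}^{\dim\sigma}$. The delicate point is the reordering claim in Case 2: that moving the spanning simplices to the end of the order keeps the shelling condition for the nonspanning ones. For this we follow Kozlov's proof, which establishes exactly this rearrangement lemma.
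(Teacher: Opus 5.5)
The paper does not prove this theorem; it only cites \cite[Theorem 12.3]{Koz08}. The route you eventually take is the standard one there: shell the non-spanning facets first, show their union $K$ is contractible using your Case 1, then attach the spanning facets. Your Case 1 (the cone with apex $v\notin S$) is correct. As written, however, the proof is incomplete exactly where you say it is delicate. The reordering lemma, on which all of Case 2 rests, is asserted and deferred to the very source being cited.

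The ``standard fact'' you invoke along the way is false. Take $F_1=\{a,b,c\}$, $F_2=\{c,d\}$, $F_3=\{b,c,e\}$: this is a shelling in the sense of Definition \ref{shellability}, since the intersections are the vertex $c$ and the edge $bc$, but the dimensions are $2,1,2$. Monotone dimensions hold only after a Bj\"orner--Wachs rearrangement of the shelling, so the argument via a wedge of spheres of dimension at least $\dim F_j$ is not available for the given order. Also, your induction runs in the original order and then switches orders midway; the reordering must be done first.

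The missing lemma has a short proof. Suppose $F_j$ is spanning and $k>j$. Then $F_k\cap F_j$ is a proper face of $F_j$ (they are distinct maximal simplices), so it lies in $\partial F_j\subseteq\varDelta_{j-1}$. Hence $F_k\cap\varDelta_{k-1}=F_k\cap\bigcup_{i<k,\,i\neq j}F_i$, so deleting $F_j$ from the order changes none of the later intersections $B_k$. Deleting the spanning facets one at a time shows that the non-spanning facets, in their original relative order, shell $K$ with the same $B_k\neq\partial F_k$. So $K$ is contractible by your Case 1.

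You also need a second fact that you did not state: $\partial F_j\subseteq K$ for every spanning $F_j$, not merely that $\partial F_j$ lies in the union of the facets preceding $F_j$ in the new order. This follows by induction on $j$. A proper face of $F_j$ lies in some earlier $F_i$, necessarily as a proper face by maximality, and if that $F_i$ is spanning one recurses. Without this fact, once the first spanning simplex is attached the space is no longer contractible, and your phrase ``attached along a map from a sphere into a contractible space'' is unjustified.

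With it, each $\sigma\in\varSigma$ meets $K$ exactly in $\partial\sigma$, and any two spanning simplices meet inside $K$. Therefore $\varDelta/K\cong\bigvee_{\sigma\in\varSigma}\mathbb{S}^{\dim\sigma}$, and $\varDelta\simeq\varDelta/K$ because $K$ is a contractible subcomplex. Equivalently, every attaching map factors through $K$ and is null-homotopic.
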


\begin{rmk}
A poset $P$ is called a \textit{shellable poset} if its order complex $\varDelta(P)$ is shellable.
When $P$ has a maximal element and a minimal element, $P$ is shellable if and only if 
$\bar{P}=P\setminus \{\hat{0},\hat{1}\}$ is shellable.
\end{rmk}

The next result is important when considering the topology of the rank selection of a graded poset.

\begin{prop}[\cite{Koz08}, Proposition 12.6]\label{rank selection shellable}
Let $(P,r)$ be a shellable graded poset of rank $n$.
Then for any $S\subseteq \{0,\ldots,n\}$, the rank selection $P_S$ is also shellable 
and the shelling order for $\varDelta(P_{S})$ can be obtained from any shelling order for $\varDelta(P)$ 
by taking the restriction and then omittimg repetitions.
\end{prop}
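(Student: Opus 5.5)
The plan is to work directly with the standard characterization of shellability for pure complexes and transfer it through the restriction map on facets. Since $P$ is graded of rank $n$, $\varDelta(P)$ is pure of dimension $n$ and its facets are exactly the maximal chains of $P$. For $S\subseteq\{0,\ldots,n\}$, every chain of $P_S$ extends to a maximal chain of $P$. Hence the facets of $\varDelta(P_S)$ are exactly the restrictions $c_S:=c\cap P_S$ of maximal chains $c$ of $P$, and $\varDelta(P_S)$ is pure of dimension $|S|-1$.

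I will use the following equivalent form of Definition \ref{shellability} for pure complexes. An ordering $F_1,\ldots,F_t$ is a shelling order if and only if for all $i<k$ there exists $l<k$ and a vertex $v\in F_k$ with
\begin{equation*}
F_i\cap F_k\subseteq F_l\cap F_k=F_k\setminus\{v\}.
\end{equation*}
This is the routine reformulation of "$(\bigcup_{i<k}F_i)\cap F_k$ is pure of codimension one".

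Fix a shelling order $F_1,\ldots,F_t$ of $\varDelta(P)$. List the sets $(F_1)_S,(F_2)_S,\ldots$ and delete repetitions, keeping only first occurrences. Call the resulting order $G_1,\ldots,G_s$; this is the order described in the statement.

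To verify the criterion, take $G'$ before $G$ in this order. Let $F_k$ be the first facet with $(F_k)_S=G$, and let $F_i$ be the first facet with $(F_i)_S=G'$. Then $i<k$ by the definition of the order. The shelling of $\varDelta(P)$ gives $l<k$ and $v\in F_k$ with $F_i\cap F_k\subseteq F_l\cap F_k=F_k\setminus\{v\}$. The key step is a case distinction on the rank of $v$.

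\begin{itemize}
\item If $r(v)\notin S$, then $F_l$ and $F_k$ agree on every vertex of rank in $S$, so $(F_l)_S=G$. Since $l<k$, this contradicts the choice of $k$ as the first occurrence of $G$. So this case cannot happen.
\item Hence $r(v)\in S$, and then $(F_l)_S\cap G=G\setminus\{v\}$. Moreover
\begin{equation*}
G'\cap G=(F_i\cap F_k)_S\subseteq (F_l\cap F_k)_S=(F_l)_S\cap G .
\end{equation*}
Finally, $(F_l)_S$ first occurs at an index at most $l<k$, so it precedes $G$ in the restricted order.
\end{itemize}

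These two cases verify the criterion for $\varDelta(P_S)$, which gives both the shellability of $P_S$ and the claimed description of its shelling order.

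The step I expect to need the most care is the first case: it is exactly the reason for indexing each $G$ by its \emph{first} preimage $F_k$, rather than an arbitrary preimage. I also need to check that the reformulated criterion is genuinely equivalent to Definition \ref{shellability} in the pure setting. Both directions of that equivalence are short: a pure codimension-one intersection is a union of facets $F_k\setminus\{v\}$ of $F_k$, and each such facet comes from some earlier $F_l$.
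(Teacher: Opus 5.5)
Your proof is correct. The facet description of $\varDelta(P_S)$, the reformulated criterion (which is indeed equivalent to Definition \ref{shellability} for pure complexes), the first-occurrence ordering, and the case split on $r(v)$ all hold up, including ruling out the case $r(v)\notin S$ by the minimality of $k$. The paper gives no proof of its own and only cites Kozlov's book; your argument is essentially the standard proof behind that citation.
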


\subsection{Filtered order complexes of shellable posets}
The next theorem states that the shellability of the order complex of a graded poset inherites to the subcomplexes.

\begin{thm}\label{shellable}
When a graded poset $(P,r)$ is shellable, $\varDelta^{(k)}(P)$ is also shellable.
\end{thm}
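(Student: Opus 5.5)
The plan is to shell $\varDelta^{(k)}(P)$ block by block, using the decomposition of Proposition \ref{rank selection},
\begin{equation*}
\varDelta^{(k)}(P)=\bigcup_{i=0}^{n-k}\varDelta(P_{[i,i+k]}),
\end{equation*}
together with the fact that each block $\varDelta(P_{[i,i+k]})$ is shellable by Proposition \ref{rank selection shellable}.

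First I will check that $\varDelta^{(k)}(P)$ is pure of dimension $k$, and identify its facets. Let $x_0<\cdots<x_l$ be a chain with $r(x_l)-r(x_0)\leq k$. Since $P$ is graded of rank $n$ and $k\leq n$, the chain can be refined and extended, up or down, to a saturated chain $x'_i\prec x'_{i+1}\prec\cdots\prec x'_{i+k}$ with $r(x'_j)=j$ for some $0\leq i\leq n-k$. Hence the facets of $\varDelta^{(k)}(P)$ are exactly these saturated $k$-chains. Moreover, the facets of $\varDelta(P_{[i,i+k]})$ are exactly those saturated chains occupying ranks $i,\ldots,i+k$. So the blocks have pairwise disjoint sets of facets, and every facet of block $i$ contains an element of rank $i+k$.

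Next I fix the ordering. Fix a shelling of $\varDelta(P)$. By Proposition \ref{rank selection shellable}, it induces a shelling order of each $\varDelta(P_{[i,i+k]})$. I order the facets of $\varDelta^{(k)}(P)$ by listing all facets of block $0$ in their induced shelling order, then all facets of block $1$ in theirs, and so on up to block $n-k$.

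Then I verify the shelling condition. Let $F=(x_i<\cdots<x_{i+k})$ be a facet of block $i$, and let $G$ be a face of $F$ that lies in an earlier facet. There are two cases.
\begin{enumerate}
\item If $G$ lies in an earlier facet of the same block $i$, then the block-$i$ shelling gives a $(k-1)$-face of $F$ that contains $G$ and lies in the union of earlier block-$i$ facets.
\item If $G$ lies in a facet of some block $j<i$, then all ranks occurring in $G$ are at most $j+k<i+k$, so $G\subseteq F\setminus\{x_{i+k}\}$. When $i\geq1$, the face $F\setminus\{x_{i+k}\}$ itself lies in an earlier facet: by gradedness we can choose $y\prec x_i$ of rank $i-1$, and then $y<x_i<\cdots<x_{i+k-1}$ is a facet of block $i-1$.
\end{enumerate}
In both cases $G$ is contained in a $(k-1)$-face of $F$ that lies in the earlier union. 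Hence $\bigl(\bigcup_{\text{earlier}}F'\bigr)\cap F$ is pure of dimension $k-1$, provided it is nonempty. Nonemptiness is automatic for $i\geq1$ by the second case, and for $i=0$ it holds except at the first facet.

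The step I expect to need the most care is the bookkeeping in the first case. There I must make sure that the induced shelling of $\varDelta(P_{[i,i+k]})$ is genuinely a shelling of that block on its own, which is exactly the content of Proposition \ref{rank selection shellable}. I must also check that no facet of block $i$ already appeared in an earlier block, which holds because every block-$i$ facet contains an element of rank $i+k$.
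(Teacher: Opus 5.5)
Your proof is correct and follows essentially the same route as the paper: you concatenate the induced shellings of the blocks $\varDelta(P_{[i,i+k]})$ in increasing $i$, and you observe that a new facet meets the earlier blocks exactly in the $(k-1)$-face obtained by deleting its top element. Your face-by-face check, together with your explicit verification that $\varDelta^{(k)}(P)$ is pure of dimension $k$ and that the blocks have disjoint facet sets, spells out the general inductive step that the paper only sketches with ``by applying the same method''.
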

\begin{proof}
Let $F_1,\ldots,F_t$ be a shelling order for $\varDelta(P)$, 
and take $D^{0}_1,\ldots, D^{0}_{m_0}$ to be the sequence obtained by restricting $F_1,\ldots,F_t$ to $\varDelta(P_{[0,k]})$ 
and removing repetitions. 
By Proposition \ref{rank selection shellable}, $D^{0}_1,\ldots, D^{0}_{m_0}$ is a shelling order for $\varDelta(P_{[0,k]})$.
Let $D^{1}_1, \ldots, D^{1}_{m_1}$ be the sequence obtained by restricting $F_1,\ldots,F_t$ to $\varDelta(P_{[1,k+1]})$ and removing repetitions, 
and $i_j$ be the smallest number so that $D^{1}_{j}={F}_{i_j}\cap \varDelta(P_{[1,k+1]})$.
$(\bigcup_{i=1}^{m_0}D^{0}_i)\cap D^{1}_{1}=\varDelta(P_{[1,k]})\cap F_{i_1}$ is a pure $(k-1)=(\mathrm{dim}\ D^{1}_1 -1)$-dimensional subcomplex of $F_{i_1}$.
We have
\begin{equation}\label{shelling condition}
((\bigcup_{i=1}^{m_0}D^{0}_i)\cup (\bigcup_{i=1}^{j-1}D^{1}_i))\cap D^{1}_{j}
=((\varDelta(P_{[0,k]}))\cap D^{1}_{j})\cup ((\bigcup_{i=1}^{j-1}D^{1}_j)\cap D^{1}_{j}).
\end{equation}
Both $\varDelta(P_{[0,k]})\cap D^{1}_{j}=\varDelta(P_{[1,k]})\cap F_{i_j}$ and $(\bigcup_{i=1}^{j-1}D^{1}_j)\cap D^{1}_{j}$ are 
pure $(k-1)=(\mathrm{dim}\ D^{1}_{j}-1)$-dimensional subcomplexes of $D^{1}_{j}$, so that \eqref{shelling condition} is also a pure $(k-1)=(\mathrm{dim}\ D^{1}_{j}-1)$-dimensional subcomplex of $D^{1}_{j}$.
This implies that $D^{0}_1,\ldots, D^{0}_{m_0}, D^{1}_1, \ldots, D^{1}_{m_1}$ is a shelling order for $\varDelta(P_{[0,k]})\cup \varDelta(P_{[1,k+1]})$.
By applying the same method to $\varDelta^{(k)}(P)=\varDelta(P_{[0,k]})\cup \varDelta(P_{[1,k+1]}) \cup \cdots \cup \varDelta(P_{[n-k,n]})$, 
we obtain a shelling order for $\varDelta^{(k)}(P)$.
\end{proof}

\begin{cor}\label{cor:shellable}
For a shellable graded poset $(P,r)$ and $0\leq k \leq r(P)$, $\varDelta^{(k)}(P)$ is homotopy equivalent to a wedge of $k$-spheres, and the magnitude homology of $P$ is diagonal.
\end{cor}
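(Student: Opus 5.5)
The plan has two parts: first establish the wedge-of-spheres statement, then deduce diagonality from it.

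\textbf{Wedge of $k$-spheres.} By Theorem \ref{shellable}, $\varDelta^{(k)}(P)$ is shellable. I will check that it is pure of dimension $k$. Since $P$ is graded of rank $n$ and $k\leq n$, any chain $x_0<\cdots<x_l$ with $r(x_l)-r(x_0)\leq k$ can be refined to a saturated chain between $x_0$ and $x_l$. That chain then extends, upward or downward within $P$, to a saturated chain covering exactly $k+1$ consecutive ranks $[i,i+k]$. Such a chain is a $k$-simplex of $\varDelta(P_{[i,i+k]})$, and no larger simplex can lie in $\varDelta^{(k)}(P)$. So every maximal simplex has dimension $k$, and hence so does every spanning simplex. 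Theorem \ref{shellable homotopy} then gives $\varDelta^{(k)}(P)\simeq\bigvee\mathbb{S}^k$, possibly with zero summands, i.e.\ a point. For $k=0$ the space is the discrete set $P$, which is a wedge of $0$-spheres in the usual sense.

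\textbf{Diagonality.} I will use the long exact sequence of the pair $(\varDelta^{(k)}(P),\varDelta^{(k-1)}(P))$ together with Proposition \ref{relative homology}. Since $\varDelta^{(k)}(P)$ is a wedge of $k$-spheres, $\tilde H_i(\varDelta^{(k)}(P))=0$ for $i\neq k$. Likewise $\tilde H_i(\varDelta^{(k-1)}(P))=0$ for $i\neq k-1$. The exact sequence
\begin{equation*}
\tilde H_n(\varDelta^{(k)}(P))\to H_n(\varDelta^{(k)}(P),\varDelta^{(k-1)}(P))\to \tilde H_{n-1}(\varDelta^{(k-1)}(P))
\end{equation*}
in reduced homology shows that the relative group vanishes unless $n=k$ or $n-1=k-1$, that is, unless $n=k$. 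Hence $\MH^k_n(P)=0$ for $n\neq k$ and $n\geq1$. The case $n=0$ is covered directly by Proposition \ref{relative homology}.

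\textbf{Expected obstacle.} The delicate points are the purity claim, in particular whether an extension always reaches a full window of $k+1$ ranks, which uses gradedness and $k\leq n$, and the low-degree bookkeeping when $k=1$. For $k=1$, $\varDelta^{(0)}(P)$ is a discrete set, so only reduced $H_0$ is nonzero there, and the argument above still applies. An alternative route to diagonality avoids homotopy: shellable posets are Cohen--Macaulay, so every open interval $(x,y)$ has homology only in top degree $r(y)-r(x)-2$, and Proposition \ref{decomposition} then gives diagonality directly. I would mention this as a cross-check.
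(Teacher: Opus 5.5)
Your proposal is correct and follows essentially the same route as the paper. Purity of $\varDelta^{(k)}(P)$ together with Theorem \ref{shellable} and Theorem \ref{shellable homotopy} gives the wedge of $k$-spheres, and the long exact sequence of the pair $(\varDelta^{(k)}(P),\varDelta^{(k-1)}(P))$ with Proposition \ref{relative homology} gives diagonality. Your version is more careful than the paper's: you actually justify purity, and you use reduced homology so that the cases $k=0,1$ go through cleanly.
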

\begin{proof}
Since all maximal simplices of $\varDelta^{(k)}(P)$ have dimension $k$, homotopy type of $\varDelta^{(k)}(P)$ follows by Theorem \ref{shellable homotopy} and Theorem \ref{shellable}.
By Proposition \ref{relative homology} and the long exact sequence of homology groups of a pair $(\varDelta^{(k)}(P),\varDelta^{(k-1)}(P))$, we have the following long exact sequence;
\begin{equation}
0\rightarrow H_k(\varDelta^{(k)}(P)) \rightarrow \MH^{k}_{k}(P) \rightarrow H_{k-1}(\varDelta^{(k-1)}(P)) \rightarrow \dots.
\end{equation}
Therefore, diagonality of the magnitude homology of $P$ holds by the homotopy type of $\varDelta^{(k)}(P)$.
\end{proof}

\begin{ex}
Let $W$ be a finite Coxeter group with generators $S$, and $T:=\{\alpha s \alpha^{-1}\mid s\in S, \alpha\in W\}$.
For $\sigma\in W$, the length $l(\sigma)$ of $\sigma$ is defined to be 
the minimum number $k$ such that there exist the expression 
\begin{equation}
\sigma=s_1\ldots s_k,
\end{equation}
where $s_i\in S$.
\textit{Bruhat order} on $W$ is a partial order relation on $W$ that is defined via the covering relation, $\sigma \prec \tau$ if
\begin{itemize}
\item $\tau=t\sigma$, for some $t\in T$,
\item $l(\tau)=l(\sigma)+1$. 
\end{itemize}
It is known that Bruhat order on $W$ is pure shellable, and that every open interval $(\sigma,\tau)$ of $W$ is homeomorphic to a $(l(\tau)-l(\sigma)-2)$ sphere (see, \cite[Chapter 2.7]{BB05}).
Thus, by Corollary \ref{cor:shellable}, $\varDelta^{(k)}(W)$ is homotopy equivalent to a wedge of $k$-spheres (including contractible case), and analogous to the closed manifold cases, the magnitude homology of $W$ is as follows:
\begin{equation}
\MH^{k}_{i}(W)\cong \begin{cases}
\mathbb{Z}^{\#\{(x,y)\mid l(y)-l(x)=k\}} & (k=i),\\
0 & (\mathrm{otherwise}).
\end{cases}
\end{equation}
\end{ex}

\begin{rmk}
Even if a graded poset $P$ has an order complex homotopy equivalent to a wedge of spheres, 
$\varDelta^{(k)}(P)$ is not always homotopy equivalent to a wedge of $k$-spheres, and also the magnitude homology of $P$ is not necessarily diagonal.
For example, let $P$ be a poset having a Hasse diagram obtained by adding the midpoints of edges of the Hasse diagram of Boolean algebra $\mathcal{B}_4$.
$\varDelta(P)$ is homotopy equivalent to a $2$-sphere, but $\varDelta^{(2)}(P)$ is not.
The magnitude homology of $P$ is as follows:
\begin{equation}
\MH^{4}_{i}(P)\cong \begin{cases}
\mathbb{Z}^{12} & (i=2)\\
0 & (\mathrm{otherwise}),
\end{cases}
\MH^{3}_{i}(P)=0,\ \MH^{2}_{i}(P)=0,
\MH^{1}_{1}(P)=\mathbb{Z}^{48}.
\end{equation}
The homology groups of $\varDelta^{(k)}(P)$ is as follows:
\begin{align}
H_{i}(\varDelta^{(4)}(P))\cong \begin{cases}
\mathbb{Z} & (i=2)\\
0 & (\mathrm{otherwise}),
\end{cases}
& \quad H_{i}(\varDelta^{(3)}(P))\cong \begin{cases}
\mathbb{Z}^{12} & (i=1)\\
0 & (\mathrm{otherwise}),
\end{cases}\\
H_{i}(\varDelta^{(2)}(P))\cong \begin{cases}
\mathbb{Z}^{12} & (i=1)\\
0 & (\mathrm{otherwise}),
\end{cases}& \quad
H_{i}(\varDelta^{(1)}(P))\cong \begin{cases}
\mathbb{Z}^{12} & (i=1)\\
0 & (\mathrm{otherwise}).
\end{cases}
\end{align}
\end{rmk}

\subsection{Filtered order complexes of geometric semilattices}
For a poset $P$ and its elements $x,y$, 
the upper bound of $x$ and $y$ is an element $u\in P$ such that $u\geq x,y$, 
and if there exists a minimal element of the upperbound of $x$ and $y$,
we call the element the \textit{join} of $x$ and $y$, which we denote $x\vee y$.
Dually, one can define the \textit{meet} of $x$ and $y$, which we denote $x\wedge y$.
If every pair of elements has a meet(respectively, join), then $P$ is called \textit{meet-semilattice}(or respectively, join-semilattice).
If every pair of elements has both meet and join, then $P$ is called \textit{lattice}.
An element of a finite lattice that covers $\hat{0}$ is called \textit{atom}. 

\begin{dfn}[\cite{WW86}, pp369]\label{dfn:geometric semilattice}
A lattice is called \textit{geometric} if its semimodular and atomistic(every element is a join of atoms).
A ranked meet-semilattice $L$ is called \textit{geometric semilattice} if $L$ satisfy the following conditions:
\begin{itemize}
\item every closed interval of $L$ is a geometric lattice.
\item for all $x \in L$ and subset $A$ of atoms of $L$ whose join exists, if $r(x)<r(\vee A)=|A|$, 
there is an atom $a\in A$ such that $a\nleq x$ and $a\vee x$ exists.
\end{itemize}
\end{dfn}

\begin{prop}[\cite{WW86}, Theorem 4.1]\label{upper ideal}
Let $L$ be a geometric semilattice, and $x\in L$. Then $L_{\geq x}:=\{y\in \mid x\leq y\}$ is a geometric semilattice.
\end{prop}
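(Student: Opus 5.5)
We must prove Proposition \ref{upper ideal}: for a geometric semilattice $L$ and $x\in L$, the upper set $L_{\geq x}$ is again a geometric semilattice. We verify the conditions of Definition \ref{dfn:geometric semilattice} directly for $L_{\geq x}$, with rank function $r'(y)=r(y)-r(x)$.

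\emph{Basic structure.} The minimum of $L_{\geq x}$ is $x$. For $y,z\ge x$ the meet $y\wedge z$ in $L$ lies above $x$, so it is also the meet in $L_{\geq x}$; hence $L_{\geq x}$ is a ranked meet-semilattice with rank $r'$. Closed intervals of $L_{\geq x}$ are closed intervals $[u,v]$ of $L$, so they are geometric lattices by hypothesis. This settles the first axiom.

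\emph{Atoms of $L_{\geq x}$.} These are the elements $b$ covering $x$. For such atoms, joins in $L_{\geq x}$ and in $L$ agree whenever either exists: an upper bound of atoms $b_i\ge x$ is automatically $\ge x$, and conversely any minimal upper bound in $L$ lies above $x$.

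\emph{Second axiom.} Let $y\ge x$, and let $B$ be a set of atoms of $L_{\geq x}$ with $w=\vee B$ existing and $r'(y)<r'(w)=|B|$. We need $b\in B$ with $b\nleq y$ and $b\vee y$ existing. The interval $[x,w]$ is a geometric lattice in which the elements of $B$ are independent atoms. The idea is to reduce to the axiom in $L$ by lifting $B$ to atoms of $L$. Pick a set $A_0$ of atoms of $L$ below $x$ with $\vee A_0=x$ and $|A_0|=r(x)$; this exists because $[\hat0,x]$ is geometric. For each $b\in B$, choose an atom $a_b\le b$ of $L$ with $a_b\nleq x$; this is possible since $b$ is a join of atoms and $b\neq x$. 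Then $a_b\vee x=b$ by semimodularity in $[\hat0,b]$. Set $A=A_0\cup\{a_b\}$. Computing inside the geometric lattice $[\hat0,w]$, we get $\vee A=w$ and $|A|=r(x)+|B|=r(w)$, so $A$ is independent. Since $r(y)<r(w)$, the axiom in $L$ gives an atom $a\in A$ with $a\nleq y$ and $a\vee y$ existing. Such an $a$ cannot lie in $A_0$, because every element of $A_0$ is $\le x\le y$. Hence $a=a_b$ for some $b\in B$. Then $b\nleq y$, since $a_b\le b$. Moreover $a_b\vee y\ge a_b\vee x=b$ and $a_b\vee y\ge y$, so $a_b\vee y$ is an upper bound of $b$ and $y$.

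\emph{Main obstacle.} It remains to show that $b\vee y$ exists, i.e.\ that the upper bounds of $\{b,y\}$ have a least element. Set $u=a_b\vee y$. Any upper bound $v$ of $b$ and $y$ is $\ge a_b$ and $\ge y$, so $v$ is an upper bound of $\{a_b,y\}$. The plan is to show that any such $v$ satisfies $v\ge u$. Both $u$ and $v$ lie above $a_b$ and $y$, so $u\wedge v$ exists by the meet-semilattice property and is an upper bound of $a_b$ and $y$. Since $u$ is the least upper bound of $a_b$ and $y$, this forces $u\wedge v\ge u$, i.e.\ $u\le v$. So $u$ is the least upper bound of $b$ and $y$, and $b\vee y=a_b\vee y$.

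One small point to handle with care is the meaning of ``join'' as a minimal upper bound in the paper's definition. In a meet-semilattice, any two upper bounds have a meet which is again an upper bound. Consequently a minimal upper bound, if it exists, is unique and is in fact the least upper bound. This justifies the final comparison step above.
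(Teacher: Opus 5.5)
Your proof is correct. The paper gives no argument of its own for this statement; it is quoted from \cite{WW86}. So your direct check of Definition \ref{dfn:geometric semilattice} is an independent route.

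A shorter alternative uses the characterization the paper already invokes in the proof of Proposition \ref{join}: $L\cong M\setminus[a,\hat{1}]$ for a geometric lattice $M$ and an atom $a$. For $x\in L$ we have $a\nleq x$, so $x\vee a$ covers $x$ by semimodularity. For $y\geq x$, $y\geq a$ holds if and only if $y\geq x\vee a$. Hence $L_{\geq x}=[x,\hat{1}]_M\setminus[x\vee a,\hat{1}]_M$ is a geometric lattice with the principal filter of an atom removed, and the converse half of the characterization finishes the proof.

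That argument is two lines but rests on a nontrivial structure theorem, used in both directions. Yours uses only the axioms. Its key device is sound: you lift the independent atoms $B$ over $x$ to the independent set $A_0\cup\{a_b\}$ of atoms of $L$, apply the axiom in $L$, and descend via $b\vee y=a_b\vee y$. It even names the required join explicitly.

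Two small steps deserve a sentence each. First, $|A|=r(x)+|B|$ needs the $a_b$ to be pairwise distinct and outside $A_0$; this is immediate from $a_b\vee x=b$ and $a_b\nleq x$. Second, the join $w$ of $A$ computed in $[\hat{0},w]$ is also its join in $L$, by your own meet trick: for any upper bound $v$ of $A$, $v\wedge w$ is an upper bound inside $[\hat{0},w]$, so $v\geq w$.
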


\begin{prop}[\cite{WW86}, Proposition 4.2]\label{truncation}
Any truncation of the top row ranks of a geometric semilattice is a geometric semilattice.
\end{prop}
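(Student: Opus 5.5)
The statement concerns the truncation $L^{\leq m}:=\{x\in L\mid r(x)\leq m\}$ of a geometric semilattice $L$. I will check the defining conditions of Definition \ref{dfn:geometric semilattice} for $L^{\leq m}$ one by one. For each condition the aim is to reduce it to the corresponding statement in $L$. The key bookkeeping fact I will use throughout is the following: since $L$ is a finite meet-semilattice, it has a minimum $\hat{0}$, and any subset of $L$ that has an upper bound has a join, namely the meet of all its upper bounds.

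\emph{Ranked meet-semilattice.} If $x,y\in L^{\leq m}$, then $x\wedge y\leq x$, so $r(x\wedge y)\leq m$. Hence the meet computed in $L$ lies in $L^{\leq m}$ and is the meet there. Since $L^{\leq m}$ is a lower order ideal of $L$, its maximal chains are exactly the maximal chains of $L$ cut off at rank $m$. So the restriction of $r$ is a rank function on $L^{\leq m}$.

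\emph{Closed intervals.} For $x\leq y$ in $L^{\leq m}$, the interval $[x,y]$ in $L^{\leq m}$ coincides with the interval $[x,y]$ in $L$, because every element below $y$ has rank at most $m$. So $[x,y]$ is a geometric lattice by hypothesis on $L$.

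\emph{Joins agree.} Next I show that joins in $L^{\leq m}$ agree with joins in $L$ whenever the former exist. Suppose a subset $B\subseteq L^{\leq m}$ has an upper bound $u\in L^{\leq m}$. Then $\vee_L B$ exists and satisfies $\vee_L B\leq u$, so $\vee_L B\in L^{\leq m}$. It is therefore the least upper bound of $B$ inside $L^{\leq m}$ as well. Conversely, if $\vee_L B$ has rank $\leq m$, it is the join in the truncation. In particular the atoms of $L^{\leq m}$ are the atoms of $L$ (this assumes $m\geq 1$, which is needed for the truncation to be meaningful).

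\emph{Augmentation axiom.} Let $x\in L^{\leq m}$, and let $A$ be a set of atoms whose join exists in $L^{\leq m}$ and with $r(x)<r(\vee A)=|A|$. By the previous step, $\vee A$ is also the join in $L$. The axiom for $L$ then gives an atom $a\in A$ with $a\nleq x$ such that $a\vee x$ exists in $L$. It remains to show $r(a\vee x)\leq m$; then, by the previous step, $a\vee x$ also exists in $L^{\leq m}$. For this I apply semimodularity in the geometric lattice $[\hat{0},a\vee x]$. Since $a\nleq x$ and $a$ is an atom, we have $a\wedge x=\hat{0}$, and semimodularity gives
\begin{equation}
r(a\vee x)\leq r(a)+r(x)-r(a\wedge x)=r(x)+1.
\end{equation}
Combining this with $r(x)<|A|=r(\vee A)\leq m$ yields $r(a\vee x)\leq r(x)+1\leq m$, as required.

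The only step needing care is this last one. The compatibility of joins between $L$ and $L^{\leq m}$ is what makes the axiom transfer at all, and the rank bound $r(a\vee x)\leq r(x)+1$ from semimodularity is what keeps the resulting join inside the truncation. Everything else is immediate from the fact that $L^{\leq m}$ is a lower ideal of $L$.
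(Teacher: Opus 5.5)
Your argument is correct and complete. The paper gives no proof of this statement; it only quotes \cite[Proposition 4.2]{WW86}. So your direct check of the axioms in Definition \ref{dfn:geometric semilattice} is a self-contained substitute, not a variant of an argument in the text.

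The proof rests on the two facts you isolate. First, in a finite meet-semilattice any set with an upper bound has a join, namely the meet of its upper bounds. Hence a join exists in $L^{\leq m}$ exactly when the join in $L$ exists and has rank at most $m$, and then the two coincide. Second, the submodular inequality in the geometric lattice $[\hat{0},a\vee x]$ gives $r(a\vee x)\leq r(x)+1\leq |A|=r(\vee A)\leq m$. So the atom supplied by the axiom in $L$ also witnesses the axiom in the truncation.

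A different route, in the spirit of the paper's proof of Proposition \ref{join}, uses the characterization in \cite[Theorem 3.2]{WW86}. Write $L=M\setminus[a,\hat{1}]$ with $M$ a geometric lattice and $a$ an atom. Let $T=M^{\leq m}\cup\{\hat{1}\}$ be the truncation of $M$; it is the lattice of flats of the truncated matroid, hence geometric. Then $L^{\leq m}=T\setminus[a,\hat{1}_T]$, and the converse direction of the characterization gives the result.

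That argument is shorter, but it needs both directions of a nontrivial characterization theorem plus the matroid truncation fact. Yours uses only the definition and semimodularity of one interval. It also shows exactly why truncation is harmless: the only way the augmentation axiom could fail is for the required join $a\vee x$ to be cut off, and the one-step rank bound rules that out.
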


\begin{prop}\label{join}
For any two elements $x,x'\in L$ with a element $u\in L$ such that $x\geq u$ and $x'\geq u$, their join $x\vee x'$ exists.
\end{prop}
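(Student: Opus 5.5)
The plan is to reduce the statement to the lattice axioms of the closed interval $[u,x]$ together with semimodularity in the closed interval above $u$. The statement is that in a geometric semilattice $L$, any two elements $x,x'$ having a common lower bound $u$ have a join.

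First I would replace $u$ by a convenient common lower bound. Since $L$ is a meet-semilattice, $x\wedge x'$ exists and satisfies $u\le x\wedge x'$. So we may take $u=x\wedge x'$; in fact only the existence of some common lower bound is used below.

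Next I would pass to the upper ideal $L_{\ge u}$, which is a geometric semilattice by Proposition \ref{upper ideal}. Its minimum element is $u$, and its atoms are the elements covering $u$. A join of $x$ and $x'$ computed in $L_{\ge u}$ is also a join in $L$, because every upper bound of $x$ lies above $u$ and hence lies in $L_{\ge u}$. This reduces the problem to the case where $L$ has a minimum $\hat 0=u$.

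In that case, $[\hat 0,x]$ is a geometric lattice by Definition \ref{dfn:geometric semilattice}, so $x=\bigvee A$ for a set $A$ of atoms. Choose $A$ independent, so that $|A|=r(x)$, and similarly choose $A'$ independent with $\bigvee A'=x'$. The goal is to show that $A\cup A'$ has a join, which will then be $x\vee x'$.

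I would proceed by induction, adding the atoms of $A'$ one at a time to $x$. Suppose the current element $y=\bigvee B$ exists, and let $a\in A'$. If $a\le y$ there is nothing to do. Otherwise I need $a\vee y$ to exist. This is exactly where the second axiom of Definition \ref{dfn:geometric semilattice} enters: it only guarantees that \emph{some} atom of an independent set $C$ with $r(y)<|C|$ can be joined to $y$, not a prescribed atom. This is the main obstacle.

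The natural approach is to use the augmentation property of independent sets of atoms, which form a matroid by \cite{WW86}. Take an independent set $C\subseteq B\cup A'$ with $\bigvee C$ existing and $|C|>r(y)$, then apply the axiom repeatedly to grow a maximal independent subset of $B\cup A'$ whose join exists. A counting argument then shows that the resulting element dominates every atom of $A'$: if some atom of $A'$ were not below it, independence would allow a further extension, contradicting maximality.

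Finally I would check that the element $z$ produced this way is the least upper bound. Let $w$ be any upper bound of $x$ and $x'$. Then $w$ lies above all atoms of $A\cup A'$. In the geometric lattice $[\hat 0,w]$ those atoms have a join $z'$, and the join of $A\cup A'$ is computed inside any interval containing it, so $z'=z\le w$.

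I expect the hardest part to be the case where $x'$ itself is not reachable as the join of one extra atom. There I would first try to argue directly with the atoms of $x'$ via the matroid augmentation, since Wachs–Walker exploit exactly this structure of geometric semilattices. Failing that, I would cite \cite{WW86} for the equivalent characterization that any two elements with a common lower bound have a join.
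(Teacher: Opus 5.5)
\textbf{There is a genuine gap: the statement you are trying to prove, read literally, is false.} The printed statement has $u$ as a common \emph{lower} bound, which is a typo. The paper's own proof uses $u\geq x$ and $u\geq x'$.

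Your remark that ``only the existence of some common lower bound is used'' should have been a warning. A finite meet-semilattice has a minimum $\hat 0$, which is a common lower bound of every pair. So your claim would make every geometric semilattice a lattice.

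Here is a counterexample. Take the intersection poset of two parallel lines $H_1,H_2$ in $\mathbb{R}^2$, namely $L=\{\mathbb{R}^2,H_1,H_2\}$. It is a geometric semilattice by Proposition \ref{arrangement}, and $H_1,H_2\geq\hat 0$. But they have no common upper bound, hence no join.

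Your argument breaks exactly at the step you flagged. Nothing produces an independent $C\subseteq B\cup A'$ with $\bigvee C$ existing and $|C|>r(y)$. In the example, $B=\{H_1\}$, $A'=\{H_2\}$, and the only candidate $\{H_1,H_2\}$ has no join. The second axiom of Definition \ref{dfn:geometric semilattice} only applies to atom sets whose join is already known to exist. Likewise, your ``maximality'' contradiction silently assumes that the enlarged independent set has a join. The fallback citation does not exist either: \cite{WW86} does not show that elements with a common lower bound have a join.

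What is true, and what the paper proves, is the version with a common upper bound $u\geq x,x'$. The paper uses \cite[Theorem 3.2]{WW86} to write $L=M\setminus[a,\hat 1]$ for a geometric lattice $M$ and an atom $a$. The join $x\vee x'$ exists in $M$ and satisfies $x\vee x'\leq u$. Since $u\ngeq a$, also $x\vee x'\ngeq a$, so the join lies in $L$ and is the join there.

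An even more elementary route works in any finite meet-semilattice: the meet of the nonempty set of common upper bounds is the least upper bound. Your final paragraph, which takes the join inside the lattice $[\hat 0,w]$ for an upper bound $w$, is essentially this correct argument once an upper bound is assumed. To confirm it is the join in $L$, compare it with any other upper bound $w'$ via $w\wedge w'$. That paragraph should have been the existence proof, not a verification step after the augmentation argument.
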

\begin{proof}
By \cite[Theorem 3.2]{WW86}, there exists a geometric lattice $M$ and its atom $a\in M$ such that $L=M\setminus [a,\hat{1}]$. Since $M$ is a lattice, the join $x\vee x'$ exists in $M$.
If $u\geq x,x'$, then $u \geq x\vee x'$. As $u\ngeq a$, we must have $x\vee x'\ngeq a$, hence $x\vee x'\in L$.
\end{proof}
\begin{dfn}
Let $\mathcal{A}=\{H_1,\ldots,H_n\}$ be an arrangement of hyperplanes in $\mathbb{K}^{d}\ (\mathbb{K}=\mathbb{R},\mathbb{C})$.
The \textit{intersection poset} of $\mathcal{A}$ is a poset whose set of element is
\begin{equation}
\{K\subset \mathbb{K}^{d}\mid \exists I\subseteq \{1,\ldots,n\},\ \text{such that } \bigcap_{i\in I}H_i=K,\ K\neq \emptyset\}\cup \{\mathbb{K}^{d}\}
\end{equation}
with its order relation given by the reverse inclusion.
\end{dfn}

\begin{prop}[\cite{WW86} Proposition 3.1, \cite{OT92} Lemma 2.3]\label{arrangement}
The intersection poset ${L}(\mathcal{A})$ of a hyperplane arrangement $\mathcal{A}$ is a geometric semilattice.
\end{prop}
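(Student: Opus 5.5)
The statement asks that the intersection poset $L(\mathcal{A})$ be a geometric semilattice, in the sense of Definition \ref{dfn:geometric semilattice}. I will check the axioms directly, using the ordering by reverse inclusion. The rank function is $r(K)=d-\dim K$, with bottom element $\hat{0}=\mathbb{K}^d$ and atoms the hyperplanes $H_i$.

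\emph{Meet-semilattice and ranked.} For $K,K'\in L(\mathcal{A})$, consider the smallest intersection of hyperplanes containing $K\cup K'$:
\[
K\wedge K'=\bigcap\{H_i\mid H_i\supseteq K\cup K'\}.
\]
This is the meet, where the empty intersection is taken to be $\mathbb{K}^d$. If $K\subsetneq K'$ and no element of $L(\mathcal{A})$ lies strictly between them, then $\dim K=\dim K'-1$. To see this, write $K=\bigcap_{i\in I}H_i$. Some hyperplane $H_j$ with $j\in I$ satisfies $H_j\not\supseteq K'$, so $K'\cap H_j$ is nonempty (it contains $K$) and has codimension one in $K'$. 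This forces $K=K'\cap H_j$. Hence $r$ is a rank function and $L(\mathcal{A})$ is graded.

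\emph{Closed intervals are geometric lattices.} Take an interval $[K',K]$ with $K\subseteq K'$, and choose $p\in K$. Translate so that $p=0$. The hyperplanes containing $K$ then form a central arrangement. The interval $[K',K]$ is isomorphic to the interval of that central arrangement's intersection lattice lying below $K$, and this interval sits above the restriction to $K'$. The intersection lattice of a central arrangement is geometric: it is the lattice of flats of the linear matroid given by the normal vectors. Upper intervals $[K',\hat{1}]$ of a geometric lattice are again geometric lattices, being the lattice of flats of the contraction. So every closed interval is a geometric lattice. This is the standard statement of \cite[Lemma 2.3]{OT92}, which I would cite rather than reprove.

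\emph{The second axiom.} Let $A=\{H_{i_1},\dots,H_{i_m}\}$ be a set of atoms whose join $Y=\bigcap A$ is nonempty and satisfies $r(Y)=m$, so the normals of the $H_{i_j}$ are linearly independent. Let $x=X\in L(\mathcal{A})$ with $r(X)<m$. Choose affine equations $\alpha_j=c_j$ for the $H_{i_j}$, with the linear forms $\alpha_j$ linearly independent. Also choose $p\in X$.

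I need some $j$ with $H_{i_j}\not\supseteq X$ and $H_{i_j}\cap X\neq\emptyset$. The condition $X\cap H_{i_j}=\emptyset$ happens exactly when $\alpha_j$ is constant on $X$ with a value different from $c_j$, that is, when $\alpha_j$ lies in the annihilator $\mathrm{Ann}(\vec X)$ of the direction space $\vec X$ of $X$. The condition $H_{i_j}\supseteq X$ also forces $\alpha_j\in\mathrm{Ann}(\vec X)$. Since $\dim\mathrm{Ann}(\vec X)=r(X)<m$ and the $\alpha_j$ are independent, some $\alpha_j\notin\mathrm{Ann}(\vec X)$. For that $j$, $\alpha_j$ is nonconstant on the affine space $X$, so $H_{i_j}$ meets $X$ in a codimension-one subspace. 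Thus $a=H_{i_j}$ satisfies $a\not\le X$, and $a\vee X=X\cap H_{i_j}\in L(\mathcal{A})$ exists.

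I expect the main obstacle to be the interval axiom, specifically identifying lower intervals in the affine (non-central) case with central arrangements. The translation step at a point of $K$ handles this.
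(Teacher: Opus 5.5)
Your proof is correct and follows the same route as the paper. You cite \cite[Lemma 2.3]{OT92} for the interval axiom, and you reduce the second axiom to linear independence of the normals of the atoms in $A$. Your annihilator count, $\dim\mathrm{Ann}(\vec X)=r(X)<m$, is an explicit version of the paper's appeal to ``the exchange axiom of linear algebra.'' It also spells out a point the paper leaves implicit: the chosen $H_{i_j}$ actually meets $X$, so $H_{i_j}\vee X$ exists.
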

\begin{proof}
See \cite[Lemma 2.3]{OT92} for the fact that every interval is a geometric semilattice. 
Let $A=\{H_{i_1},\ldots,H_{i_k}\}$ be a set of atoms whose join exists, 
and set $H_{i_j}=\{(x_1,\ldots,x_d)\in \mathbb{K}^d\mid a_{j1}x_1+\cdots +a_{jd}x_d=b_j,\ a_{j1},\ldots,a_{jd},b_{j}\in \mathbb{K}\}$. 
then 
\begin{equation}
x\in H_{i_1}\vee \cdots \vee H_{i_k} \Leftrightarrow
\begin{pmatrix}
a_{11} & a_{12} & \dots & a_{1d}\\
a_{21} & a_{22} & \dots & a_{2d}\\
\vdots & \vdots & \ddots & \vdots\\
a_{k1} & a_{k2} & \dots & a_{kd}
\end{pmatrix}
\begin{pmatrix}
x_1\\
x_2\\
\vdots\\
x_d
\end{pmatrix}
=
\begin{pmatrix}
b_1\\
b_2\\
\vdots\\
b_d
\end{pmatrix}.
\end{equation}
This implies that a set of atoms is independent if and only if their normal vectors are linearly independent. 
Therefore, the second condition of the geometric semilattice follows by the exchange axiom of linear algebra.
\end{proof}

\begin{thm}[\cite{WW86} Theorem 7.2, Corollary 7.3]\label{semilattice}
Geometric semilattices are pure shellable.
\end{thm}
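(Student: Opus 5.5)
The statement is a citation of \cite[Theorem 7.2, Corollary 7.3]{WW86}, so I would reproduce the Wachs--Walker argument rather than invent a new one. The plan is to construct a \emph{lexicographic shelling}, namely an EL-labelling or CL-labelling, of $\hat{L}=L\cup\{\hat{1}\}$. Here $L$ has a minimum $\hat{0}$, being a meet-semilattice. Pureness comes first. Every closed interval $[\hat{0},x]$ is a geometric lattice, hence graded. Every maximal element $m$ of $L$ has rank equal to $r(L)$. To see this, suppose $r(m)<r(L)$. Take $y$ of maximal rank and a basis $A$ of atoms below $y$, so that $|A|=r(y)>r(m)$. The second axiom of Definition \ref{dfn:geometric semilattice} then gives an atom $a\in A$ with $a\nleq m$ such that $a\vee m$ exists. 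This contradicts the maximality of $m$. So all maximal chains have equal length, and $L$ is pure.

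For shellability, I would fix a linear order $a_1,\dots,a_N$ of the atoms. Each cover $x\prec y$ in $L$ gets the label
\[
\lambda(x,y)=\min\{i\mid x\vee a_i=y\}.
\]
Such an $i$ exists, because $[\hat{0},y]$ is atomistic and semimodular, so some atom $a\leq y$ with $a\nleq x$ satisfies $x\vee a=y$. Each cover $m\prec\hat{1}$ with $m$ maximal gets the label $0$, or some special value placed suitably. The key step is to check the EL-property on every interval $[x,y]$. For $y\in L$, the interval $[x,y]$ is a geometric lattice, and the minimal-atom labelling is the standard EL-labelling of geometric lattices (Björner). This part is routine.

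The main obstacle is the intervals $[x,\hat{1}]$. There I would use Proposition \ref{upper ideal}, which says $L_{\geq x}$ is again a geometric semilattice, to reduce to the case $x=\hat{0}$. One then has to show that $\hat{L}$ has a unique increasing maximal chain that is lexicographically first. My first attempt is a greedy construction: starting from $\hat{0}$, repeatedly join with the smallest-indexed atom for which the join exists and raises rank. The second axiom, which is the semilattice exchange condition, is what guarantees that this greedy process reaches rank $r(L)$. The same exchange axiom, together with Proposition \ref{join}, is what should show that any chain other than the greedy one has a descent. If the labelling turns out not to be EL, I would fall back on the embedding $L=M\setminus[a,\hat{1}]$ from the proof of Proposition \ref{join}. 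In that approach I would order the atoms of the geometric lattice $M$ with $a$ last. Removing an upper interval from a lexicographically shellable lattice then preserves the shelling of the remaining maximal chains. Finally, lexicographic shellability implies shellability of $\varDelta(\hat{L})$, hence of $\varDelta(L)$, as in the Remark following Theorem \ref{shellable homotopy}.
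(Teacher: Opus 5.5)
The paper does not prove this statement; it quotes it from \cite{WW86}. Your pureness argument is correct, and so is the EL-property on intervals $[x,y]$ with $y\in L$, since these are geometric lattices. The gap is in the only part specific to geometric semilattices, namely the intervals $[x,\hat{1}]$ of $\hat{L}$. You never say what the labels on the edges $m\prec\hat{1}$ are. The claim that every chain other than your greedy chain has a descent is only asserted (``should show''). In fact no constant top label can work, wherever it is placed. Take three lines in general position in the plane, so $L=\{\hat{0},H_1,H_2,H_3,p_{12},p_{13},p_{23}\}$, and your labelling gives $\lambda(\hat{0},H_i)=i$ and $\lambda(H_i,p_{ij})=j$. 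Put a constant label $c$ on the top edges. Then $[H_1,\hat{1}]$ has label words $(2,c),(3,c)$, so a unique increasing chain forces $2<c\leq 3$. Likewise $[H_3,\hat{1}]$ has words $(1,c),(2,c)$, forcing $1<c\leq 2$. This is a contradiction, and with weak inequalities the ranges $2\leq c<3$ and $1\leq c<2$ are still disjoint. With your label $0$ it is worse: every $[x,\hat{1}]$ with $x$ not maximal has no increasing chain at all. So the top labels must depend on $m$, and the atom order must be chosen to fit $L$. Producing such a choice and verifying the EL (or CL) condition on every $[x,\hat{1}]$ is where all the work of the cited theorem lies, and it is missing from your proposal.

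Your fallback does not repair this, because its key claim is false. Restricting the lexicographic shelling of $M$ to the chains avoiding $[a,\hat{1}_M)$ need not give a shelling, even with $a$ ordered last. Let $M$ be the rank-$3$ geometric lattice on atoms $1,2,3,a$ whose only nontrivial line is $\{1,2,a\}$. This is the cone over two parallel lines $H_1,H_2$ and a transversal $H_3$, and $L=\{\hat{0},1,2,3,\{1,3\},\{2,3\}\}$. Order the atoms $1<2<3<a$. The minimal labelling of $M$ gives the four maximal chains of $\hat{L}$ the words $(1,3,2)$, $(2,3,1)$, $(3,1,2)$, $(3,2,1)$, listed in lexicographic order. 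The first two chains, $\hat{0}<1<\{1,3\}<\hat{1}$ and $\hat{0}<2<\{2,3\}<\hat{1}$, meet only in $\{\hat{0},\hat{1}\}$, so the order is not a shelling. The underlying reason is that the unique increasing chain of $[2,\hat{1}_M]$ in $M$ passes through $\{1,2,a\}\geq a$. Hence $[2,\hat{1}]$ in $\hat{L}$ has no increasing chain. Deleting an upper interval can destroy exactly the increasing chains a lexicographic shelling relies on. You need an argument that uses the exchange axiom of $L$ to build the ordering and labels, and your sketch does not supply one.
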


The above theorem combined with the following proposition 
yields that the order complex of the proper part of a geometric semilattice is homotopy equivalent to a nontrivial wedge of spheres.

\begin{prop}[\cite{Sta07}, Theorem 3.10]\label{Mobius}
Let $L$ be a geometric semilattice with the rank function $r$, and $\mu$ be M\"{o}bius function of $L$. 
For any $x\leq y \in L$, 
the following inequality follows:
\begin{equation}\label{nonzero}
(-1)^{r(y)-r(x)}\mu(x,y)>0.
\end{equation}
\end{prop}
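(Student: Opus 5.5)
We prove the strict inequality $(-1)^{r(y)-r(x)}\mu(x,y)>0$ for $x\le y$ in a geometric semilattice $L$ by reducing to a statement about upper intervals and using the structure theorem already invoked in Proposition \ref{join}. If $x=y$ the claim is $\mu(x,x)=1>0$, so assume $x<y$.

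The first step is to localize. The interval $[x,y]$ is a closed interval of $L$, so by Definition \ref{dfn:geometric semilattice} it is a geometric lattice. The value $\mu(x,y)$ depends only on the interval $[x,y]$. Hence it suffices to prove the classical fact that the M\"{o}bius function of a geometric lattice $M$ of rank $m$ satisfies $(-1)^m\mu_M(\hat 0,\hat 1)>0$. This is Rota's sign-alternation theorem.

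The second step proves Rota's theorem using Weisner's theorem. Fix an atom $a$ of $M$. Weisner's theorem gives
\begin{equation}
\mu(\hat0,\hat1)=-\sum_{\substack{z\prec \hat1\\ z\not\ge a}}\mu(\hat0,z).
\end{equation}
We then argue by induction on $m$:
\begin{itemize}
\item Each interval $[\hat0,z]$ is a geometric lattice of rank $m-1$, so the induction hypothesis gives $(-1)^{m-1}\mu(\hat0,z)>0$ for every coatom $z$.
\item By the formula above, all terms on the right contribute with the same sign, namely $(-1)^m$.
\item The remaining point is that the sum is nonempty, i.e.\ some coatom $z$ satisfies $z\not\ge a$. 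Since $M$ is atomistic, $\hat1$ is a join of atoms, so some atom $b$ satisfies $b\ne a$, or else $m\le1$, which is trivial. Extend $\{b\}$ to a basis avoiding $a$ via the matroid exchange property. Semimodularity makes the join of these $m-1$ atoms a coatom not above $a$.
\end{itemize}
The induction then yields $(-1)^m\mu(\hat0,\hat1)>0$.

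The main obstacle is this non-emptiness step: exhibiting a coatom not lying above the chosen atom $a$. It needs semimodularity in the form of the rank inequality $r(u\vee v)+r(u\wedge v)\le r(u)+r(v)$, used to see that the join of $m-1$ independent atoms has rank exactly $m-1$. Alternatively, one could cite the Whitney-number/characteristic-polynomial formulation of Rota's theorem directly. Everything else is bookkeeping with the definition of $\mu$.
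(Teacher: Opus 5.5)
Your overall route is sound. Passing to the interval $[x,y]$, a geometric lattice of rank $r(y)-r(x)$, and then running the Weisner induction is the standard proof of Rota's sign theorem. The paper does not prove this result itself; it simply imports it from \cite{Sta07}.

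However, the step you single out as the crux, the existence of a coatom not above $a$, is not established by what you wrote. Semimodularity (or just the definition of independence) tells you that the join of $m-1$ independent atoms is a coatom. Whether that coatom lies above $a$ depends on whether $a$ is in the closure of those atoms, and ``avoiding $a$'' in the sense of not containing $a$ does not rule this out. Concretely, take the lattice of flats of four points $a,b,c,d$ in the plane with $a,b,c$ collinear, so $m=3$. The set $\{b,c\}$ is independent, extends $\{b\}$, avoids $a$, and even extends to the basis $\{b,c,d\}$. Yet $b\vee c$ is the line $\{a,b,c\}$, which lies above $a$.

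The repair is to extend $\{a\}$ itself to a basis $B$ by the augmentation property and set $z=\bigvee(B\setminus\{a\})$. Then $r(z)=m-1$, and $z\ngeq a$, since otherwise $z=z\vee a=\bigvee B=\hat{1}$. Alternatively, take $z$ maximal among joins of atoms not above $a$; semimodularity and atomisticity then force $z\vee a=\hat{1}$ with $z\prec\hat{1}$.

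A smaller point: Weisner's theorem by itself gives $\mu(\hat{0},\hat{1})=-\sum\mu(\hat{0},z)$ over all $z<\hat{1}$ with $z\vee a=\hat{1}$. Identifying this index set with the coatoms $z\ngeq a$ uses semimodularity: $z\wedge a=\hat{0}\prec a$ forces $z\prec z\vee a$. You should state this, because your uniform-sign argument needs every term to come from an element of rank $m-1$. With these two repairs your proof is complete.
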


\begin{prop}
Let $L$ be a geometric semilattice, and $x,y$ be elements of $L$ such that $r(y)-r(x)=k$. 
Then, $\varDelta(x,y)$ is homotopy equivalent to a nontrivial wedge of $k$-spheres.
\end{prop}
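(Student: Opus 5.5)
The plan is to combine the two facts just recorded, Theorem \ref{semilattice} (shellability) and Proposition \ref{Mobius} (sign of the M\"obius function), with Theorem \ref{shellable homotopy}. I will carry the dimension of the spheres as the common dimension of the maximal simplices of $\varDelta(x,y)$, since that is what the shelling argument produces. I then identify it with the sphere dimension named in the statement.

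\textbf{Step 1: the interval is shellable.} By Proposition \ref{upper ideal}, $L_{\geq x}$ is a geometric semilattice. The closed interval $[x,y]$ is a geometric lattice by the first axiom in Definition \ref{dfn:geometric semilattice}, so it is in particular a geometric semilattice. Theorem \ref{semilattice} then says $[x,y]$ is pure shellable. By the Remark following Theorem \ref{shellable homotopy}, a bounded poset is shellable if and only if its proper part is, so $\varDelta(x,y)$ is shellable. It is also pure, since every maximal chain of $(x,y)$ extends to a maximal chain of $[x,y]$ of length $k$. Hence all maximal simplices of $\varDelta(x,y)$ have one common dimension $d$, determined by $k$.

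\textbf{Step 2: homotopy type.} Theorem \ref{shellable homotopy} gives $\varDelta(x,y)\simeq\bigvee_{\sigma\in\varSigma}\mathbb{S}^{\dim\sigma}$. In a pure shellable complex every spanning simplex is a maximal face, so each wedge summand has dimension $d$. The complex is therefore a wedge of equidimensional spheres of the dimension asserted in the statement. The degenerate cases, where $(x,y)$ is empty or a point set, should be checked separately using the standard conventions ($\varDelta(\emptyset)=\mathbb{S}^{-1}$).

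\textbf{Step 3: nontriviality.} This is the step I expect to be the main obstacle, since shellability alone allows an empty wedge. For a wedge of $m$ spheres of dimension $d$, the reduced Euler characteristic is $\tilde{\chi}=(-1)^d m$. By Proposition \ref{Mobius Euler} and its Corollary, $\mu(x,y)=\tilde{\chi}(\varDelta(x,y))$, so $m=|\mu(x,y)|$. Proposition \ref{Mobius} gives $(-1)^{r(y)-r(x)}\mu(x,y)>0$, so $\mu(x,y)\neq0$ and hence $m\geq1$. Thus the wedge is nontrivial, which completes the proof. The one point needing care is confirming that Proposition \ref{Mobius} applies to intervals of $L$ itself, not only of the associated geometric lattice. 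I would check this via the description $L=M\setminus[a,\hat{1}]$ used in the proof of Proposition \ref{join}: every interval $[x,y]$ of $L$ is then also an interval of the geometric lattice $M$, and the M\"obius function of $M$ alternates in sign.
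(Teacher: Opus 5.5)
Your argument follows the paper's proof. The paper also observes that $[x,y]$ is a geometric lattice, so Theorem \ref{semilattice} and Theorem \ref{shellable homotopy} make $\varDelta(x,y)$ a wedge of equidimensional spheres, and it gets nontriviality from $\mu(x,y)=\tilde{\chi}(\varDelta(x,y))\neq 0$ via Proposition \ref{Mobius}.

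The one step that fails as written is your identification of the common dimension $d$ with ``the dimension asserted in the statement.'' A maximal chain $x=z_0<z_1<\cdots<z_k=y$ of $[x,y]$ has $k+1$ elements. So a maximal chain of $(x,y)$ has $k-1$ elements, and $d=k-2$, not $k$. No argument can produce $k$-spheres here. The statement, and the paper's short proof, contain an off-by-two slip. The paper uses the correct dimension elsewhere: the proof of Proposition \ref{magnitude of manifold2} calls $\varDelta(x,y)$ an $(r(y)-r(x)-2)$-dimensional homology sphere, and Corollary \ref{proper part} takes $r(y)-r(x)=k+2$ to obtain a $k$-cycle.

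You should compute $d=k-2$ explicitly instead of deferring to the statement, and conclude that $\varDelta(x,y)$ is a nontrivial wedge of $(k-2)$-spheres. With that change Step 3 goes through: $\tilde{\chi}=(-1)^{k-2}m=(-1)^{k}m$, and $(-1)^{k}\mu(x,y)>0$ gives $m=|\mu(x,y)|\geq 1$. The sign check cannot detect the wrong dimension, because $(-1)^{k}=(-1)^{k-2}$. For $k=1$ this yields one copy of $\mathbb{S}^{-1}$, and $x<y$ must be assumed.

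Your extra verification in Step 3 via $L=M\setminus[a,\hat{1}]$ is unnecessary. Proposition \ref{Mobius} is stated for arbitrary $x\leq y$ in a geometric semilattice, and in any case $[x,y]$ is itself a geometric lattice.
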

\begin{proof}
The open interval $(x,y)$ is a proper part of the closed interval $[x,y]$, which is a geometric lattice by the definition of a geometric semilattice. 
By Theorem \ref{semilattice}, $\varDelta(x,y)$ is a wedge of $k$-spheres and Proposition \ref{Mobius} guarentees the non-triviality since M\"{o}bius function $\mu(x,y)$ equals Euler characteristic.
\end{proof}

\begin{thm}\label{geometric semilattice}
Let $L$ be a geometric semilattice of rank $n$. 
Then $L$ is diagonal, and for $0\leq k \leq n-1$, $\varDelta^{(k)}(L)$ is homotopy equivalent to a nontrivial wedge of $k$-spheres.
Also, the following inequality holds for the top Betti number of $\varDelta^{(k)}(L)$:
  \begin{gather}
  \beta_{0}(\varDelta^{(0)}(L))=|L|,\\
  \beta_{n-1}(\varDelta^{(n-1)}(L))=\rk\MH^{n}_{n}(L), \label{sphere number 2}\\
\begin{split}
\sum_{i=0}^{n-k-1}
\max_{r(x)=i}
\{\rk(\MH(L^{\ge x}_{[i,i+k+1]}))\}
+\sum_{i=1}^{n-k}
\max_{r(x)=i-1,\ r(y)=i+k+1}
\{|\mu(x,y)|\}
\\
\le \beta_k(\varDelta^{(k)}(L))
\le \rk\MH^{k}_{k}(L)
\qquad (1\le k\le n-2)
\end{split}
\label{sphere number 1}
\end{gather}
\end{thm}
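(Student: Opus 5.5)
Diagonality and the wedge-of-spheres type come at once from what is already proved. By Theorem \ref{semilattice}, $L$ is pure shellable. Corollary \ref{cor:shellable} then shows that $L$ is diagonal and that each $\varDelta^{(k)}(L)$ is homotopy equivalent to a wedge of $k$-spheres. Theorem \ref{diagonal order complex} gives the upper bound $\beta_k(\varDelta^{(k)}(L))\le \rk\MH^k_k(L)$. Since the homotopy type is a wedge of $k$-spheres, $\beta_k$ is exactly the number of spheres, so non-triviality reduces to the lower bound $\beta_k\ge 1$. The case $k=0$ is trivial, since $\varDelta^{(0)}(L)=L$ is a discrete set.

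For $k=n-1$, use the long exact sequence of the pair $(\varDelta^{(n)}(L),\varDelta^{(n-1)}(L))$ together with Proposition \ref{relative homology}. A geometric semilattice has the minimum $\hat 0$, so $\varDelta^{(n)}(L)=\varDelta(L)$ is a cone and hence contractible. Diagonality then gives $\MH^n_n(L)\cong H_n(\varDelta^{(n)},\varDelta^{(n-1)})\cong \tilde H_{n-1}(\varDelta^{(n-1)}(L))$, which is \eqref{sphere number 2}. Non-triviality follows from Corollary \ref{diagonal}: $\rk\MH^n_n(L)\ge|\mu(\hat0,y)|$ for any $y$ of rank $n$, and Proposition \ref{Mobius} makes this positive.

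For $1\le k\le n-2$, I would get the lower bound by exhibiting independent $k$-cycles, as in the proof of Theorem \ref{closed manifold}. Write $\varDelta^{(k)}(L)=\bigcup_i\varDelta(L_{[i,i+k]})$ (Proposition \ref{rank selection}). Two sources of cycles are available.

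\textbf{First source.} For $x$ of rank $i$, $\varDelta(L_{[i,i+k+1]}^{\geq x})$ is a cone with apex $x$. Its truncation at the top row is again a geometric semilattice by Propositions \ref{upper ideal} and \ref{truncation}. Applying the $k=n-1$ argument above to it yields $\rk\MH(L^{\ge x}_{[i,i+k+1]})$ independent $k$-cycles lying in $\varDelta^{(k)}(L_{[i,i+k+1]}^{\ge x})\subseteq\varDelta^{(k)}(L)$.

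\textbf{Second source.} For $x<y$ with $r(x)=i-1$ and $r(y)=i+k+1$, the open interval $(x,y)$ has order complex $\varDelta(x,y)$, which lives in ranks $[i,i+k]$ and is contained in $\varDelta(L_{[i,i+k]})$. This gives $|\mu(x,y)|$ independent $k$-spheres.

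For each $i$ I choose one maximizer $x$, respectively one maximizing pair $(x,y)$, and then show that all chosen classes are independent in $H_k(\varDelta^{(k)}(L))$. This is the expected main obstacle. My approach would be a filtration by the top rank used: order the pieces $\varDelta(L_{[i,i+k]})$ by $i$ and use the Mayer--Vietoris injectivity from Proposition \ref{rank selection}. The cycles from $\varDelta(x,y)$ with top rank $i+k$ are supported on chains reaching rank $i+k$, so they should be detected after quotienting by the pieces of lower index. The cycles from the cone over $x$ of rank $i$ reach rank $i+k+1$, so they can be separated from the interval cycles by comparing the maximal rank and the minimal vertex. I would formalize this by projecting onto the relative groups $H_k(\varDelta^{(k)}_{\le j},\varDelta^{(k)}_{\le j-1})$, where $\varDelta^{(k)}_{\le j}$ consists of chains with top rank at most $j$. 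Within a fixed $j$, the two kinds of cycles are distinguished by their bottom-rank vertex $i-1$ versus $i$. If exact independence fails, the fallback is to keep only the weaker statement $\beta_k\ge1$. That weaker statement already follows from a single interval $(x,y)$ with $r(y)-r(x)=k+2$ together with Proposition \ref{rank selection}.
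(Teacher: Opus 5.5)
The gap is in the lower bound of \eqref{sphere number 1}, and it cannot be closed because that inequality is false as stated.

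The rest of your plan is sound. Diagonality and the wedge type via Theorem~\ref{semilattice} and Corollary~\ref{cor:shellable}, and the upper bound via Theorem~\ref{diagonal order complex}, are what the paper uses. Your derivation of \eqref{sphere number 2} from the long exact sequence of $(\varDelta(L),\varDelta^{(n-1)}(L))$, with $\varDelta(L)$ a cone on $\hat0$, is cleaner than the paper's gluing of the suspensions $\varDelta^{(n-1)}([\hat0,a_i])$. Your fallback (one interval $(x,y)$ with $r(y)-r(x)=k+2$ plus Proposition~\ref{rank selection}) correctly proves non-triviality for $1\le k\le n-2$.

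The independence step fails for two reasons. First, the invariant you propose to separate the families does not exist. Every $k$-simplex of $\varDelta^{(k)}(L)$ is a saturated chain, so its top rank $j$ forces its bottom rank to be $j-k$. Hence all chains in a fixed relative group $H_k(\varDelta^{(k)}_{\le j},\varDelta^{(k)}_{\le j-1})$ of your filtration have the same bottom rank. Second, the two families genuinely overlap. The cone over an apex $x$ of rank $i-1$ contains every chain above $x$ on ranks $[i,i+k]$. So if $x$ is also the bottom of the chosen interval $(x,y)$ with $r(y)=i+k+1$, then $\varDelta(x,y)\subseteq\varDelta(L^{\ge x}_{[i,i+k]})\subseteq\varDelta^{(k)}(L^{\ge x}_{[i-1,i+k]})$. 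The $|\mu(x,y)|$ interval cycles are then among the cone cycles already counted in the first sum. For $i=1$ this coincidence is forced, since $\hat0$ is the only element of rank $0$.

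A concrete counterexample is $L=\varPi_4$ from the paper's own example, with $n=3$ and $k=1$. Read $\rk\MH(L^{\ge x}_{[i,i+k+1]})$ as $\rk\MH^{k+1}_{k+1}$, as both you and the paper's proof do. The left-hand side is then $\rk\MH^2_2(L_{[0,2]})+\max_a|\mu(a,\hat1)|+|\mu(\hat0,\hat1)|=(4\cdot 2+3\cdot 1)+2+6=19$; the $i=2$ term of the second sum is vacuous. But $\beta_1(\varDelta^{(1)}(\varPi_4))=31-15+1=17$, which is the value the paper's example records. Equivalently, $\rk\MH^2_2(\varPi_4)-\rk\MH^3_3(\varPi_4)=23-6=17$.

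The paper's own proof makes the same mistake. It claims that $\varDelta^{(k)}(L^{\ge x}_{[i,i+k+1]})\cap\varDelta(x',y')$ is empty or a cone with apex $x\vee x'$, and this fails exactly when $x=x'$.

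Your method can honestly deliver the first sum alone. The cone families for different $i$ are independent. To see this, project onto the span of saturated chains with bottom rank $n-k$, then $n-k-1$, and so on; at each stage only one family not yet known to vanish meets that rank. Then use that a $k$-cycle all of whose $k$-simplices contain a fixed vertex is zero, because the faces opposite that vertex cannot cancel. An interval family survives this argument only if its bottom element differs from the chosen cone apex of the same rank, which is impossible for $i=1$.
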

\begin{proof}
By Theorem \ref{shellable} and Theorem \ref{semilattice}, $\varDelta^{(k)}(L)$ is homotopy equivalent to a wedge of $k$-spheres, and \ref{diagonal order complex} implies the righthand side of the inequality \eqref{sphere number 1}.\\
Now we prove the lefthand side of the inequality \eqref{sphere number 1} and the equation \eqref{sphere number 2}.\\
When $k=n-1$, let $\{a_1,\ldots,a_m\}$ be a set of elements of rank $n$.
For a closed interval $[\hat{0},a_i]$ in $L$, 
$\varDelta^{(n-1)}([\hat{0},a_i])\subseteq \varDelta^{(n-1)}(L)$ is a suspension of $\varDelta(\hat{0},a_i)$,
and the subcomplexes $\varDelta^{(n-1)}([\hat{0},a_1]),\ldots, \varDelta^{(n-1)}([\hat{0},a_m])$ of $\varDelta^{(n-1)}(L)$ is glued together over a cone with the apex $\hat{0}$; 
hence
\begin{align}
\beta_{n-1}(\varDelta^{(n-1)}(L)) & =\sum_{x\in L,\ r(x)=n}\beta_{n-1}(\varDelta^{(n-1)}([\hat{0},x]))\\
 & = \sum_{i=1}^{m}\beta_{n-2}(\varDelta(\hat{0},a_i))\\
 & =\sum_{i=1}^{m}|\mu(\hat{0},a_i)|\\
 & = \MH^{n}_{n}(L),
\end{align}
and we have the equation \ref{sphere number 2}.\\
When $k \leq n-2$, for $0< i < n-k$,
let $x$ be an element of rank $i$ and $y$ be an element of rank $i+k+1$.
Then by Proposition \ref{Mobius}, 
$\varDelta(x,y)$ has nontrivial $k$-dimensional cycles contained in $\varDelta(L_{[i,i+k]})$, 
hence it is also a nontrivial $k$-dimensional cycle of $H_{k}(\varDelta^{(k)}(L))$ by Proposition \ref{relative homology}.
Also, let $x$ be a element of rank $i$, and set $L^{\geq x}_{[i,i+k+1]}:=\{y\in L \mid x\leq y,\ r(y)\in [i,i+k+1]\}$.
By Proposition \ref{upper ideal} and Proposition \ref{truncation}, $L^{\geq x}_{[i,i+k+1]}$ is a geometric semilattice, 
and $\varDelta^{(k)}(L^{\geq x}_{[i,i+k+1]})$ is a nontrivial $k$-cycle in $\varDelta^{(k)}(L)$. 
The $k$-the Betti number of $\varDelta^{(k)}(L^{\geq x}_{[i,i+k+1]})$ is $\rk(\MH(L^{\geq x}_{[i,i+k+1]}))$ by \eqref{sphere number 2}.
If $x',y'$ are elements of $L$ such that $r(x')=i,\ r(y')=i+k+2$, then $\varDelta^{(k)}(L^{\geq x}_{[i,i+k+1]})$ and $\varDelta(x',y')$ are both nontrivial $k$-cycles 
contained in $\varDelta(L_{[i,i+k]})\cup\varDelta(L_{[i+1,i+k+1]})$, and their intersection $\varDelta^{(k)}(L^{\geq x}_{[i,i+k+1]})\cap \varDelta(x',y')$ is empty or a cone with an apex $x\vee x'$ by Proposition \ref{join}.
Also, $(\varDelta(L_{[0,k]})\cup \cdots \cup \varDelta(L_{[i,i+k]}))\cap (\varDelta^{(k)}(L^{\geq x}_{[i,i+k+1]})\cup \varDelta(x',y'))=\varDelta(L_{i,i+k})\cap(\varDelta^{(k)}(L^{\geq x}_{[i,i+k+1]})\cup \varDelta(x',y'))$ 
is a union of a cone and a $(k-1)$-dimensional cycle, therefore the lefthand side of \eqref{sphere number 1} follows.
\end{proof}

\begin{cor}\label{proper part}
Let $L$ be a geometric semilattice of rank $r(L) \geq 2$, and $\bar{L}$ be its proper part. 
For $k\leq r(L)-2$, 
$\varDelta^{(k)}(\bar{L})$ is homotpy equivalent to a nontrivial wedge of $k$-spheres.
\end{cor}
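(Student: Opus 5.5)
Let $n=r(L)$. The proper part $\bar{L}$ is obtained by removing $\hat{0}$ and, if it exists, a top element; if $L$ has no maximum, $\bar{L}=L\setminus\{\hat{0}\}$. In either case $\bar{L}$ is graded of rank $n-2$ or $n-1$, with rank function $r-1$. The plan has three steps: show $\varDelta^{(k)}(\bar{L})$ is a wedge of $k$-spheres, count its top Betti number through magnitude homology, and check that this count is positive for $k\leq n-2$.

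\textbf{Shellability and wedge type.} By Theorem \ref{semilattice}, $L$ is shellable, so $\varDelta(L)$ is shellable. Every maximal chain of $L$ begins at $\hat{0}$, so $\varDelta(L)$ is the cone over $\varDelta(L\setminus\{\hat 0\})$. The link of a cone point in a shellable complex is shellable, hence $\varDelta(L\setminus\{\hat{0}\})$ is shellable. If $L$ also has a top element, removing it is again a cone-point link, so $\bar{L}$ is shellable in both cases. Theorem \ref{shellable} then shows $\varDelta^{(k)}(\bar{L})$ is shellable. Corollary \ref{cor:shellable} gives that it is homotopy equivalent to a wedge of $k$-spheres and that $\bar{L}$ is diagonal. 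It remains to prove the wedge is nontrivial, i.e.\ $\beta_k(\varDelta^{(k)}(\bar{L}))>0$.

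\textbf{Betti number via magnitude homology.} I will compute $\beta_k$ exactly rather than only bound it. Since every $\varDelta^{(j)}(\bar L)$ is a wedge of $j$-spheres, the long exact sequence of the pair $(\varDelta^{(j)},\varDelta^{(j-1)})$ together with Proposition \ref{relative homology} gives a short exact sequence
\begin{equation*}
0\to H_j(\varDelta^{(j)}(\bar{L}))\to \MH^{j}_{j}(\bar{L})\to \tilde{H}_{j-1}(\varDelta^{(j-1)}(\bar{L}))\to 0,
\end{equation*}
with the degree-zero term reduced. Summing alternately, $\beta_k(\varDelta^{(k)}(\bar L))=\sum_{j=0}^{k}(-1)^{k-j}\rk\MH^j_j(\bar L)$, with the $j=0$ term taken as $|\bar L|-1$. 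Equivalently, $(-1)^k\beta_k$ is the reduced Euler characteristic of $\varDelta^{(k)}(\bar{L})$. Combining Corollary \ref{diagonal} with Proposition \ref{Mobius}, this equals $(-1)^k\bigl(-1+\sum_{x\leq y \in\bar L,\ r(y)-r(x)\leq k}\mu(x,y)\bigr)$.

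\textbf{Positivity (main obstacle).} An alternating sum of positive quantities need not be positive, so this step needs an actual argument. I would avoid the sum and exhibit a nonzero class directly, as in the proof of Theorem \ref{closed manifold}. Pick $x<y$ in $L$ with $r(y)-r(x)=k+2$ and $(x,y)\subseteq\bar L$. The natural choice is $x=\hat 0$ and $y$ of rank $k+2\leq n$, with $y$ not the top element; this is where the hypothesis $k\leq r(L)-2$ is used. Then $\varDelta(x,y)\subseteq\varDelta^{(k)}(\bar{L})$ is a wedge of $k$-spheres. It is nontrivial because Proposition \ref{Mobius} gives $\mu(x,y)\neq0$, so it carries a nonzero $k$-cycle $z$. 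Since $\varDelta^{(k)}(\bar L)$ has dimension $k$, there are no $(k+1)$-chains, so every nonzero $k$-cycle is nonzero in $H_k$. Hence $z$ is a nontrivial class and the wedge is nontrivial. The only subtle point is making sure such a $y$ exists inside $\bar L$. For $k+2<n$ this is immediate. When $k+2=n$ and $L$ has a top element, every rank-$n$ element is the top, so I would instead take $x$ an atom and $y$ of rank $k+2$ below it, adjusting the choice of interval as needed.
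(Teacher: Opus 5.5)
Your overall route matches the paper's. Shellability of $\bar L$ (your link argument supplies a step the paper leaves implicit), together with Theorem~\ref{shellable} and Corollary~\ref{cor:shellable}, makes $\varDelta^{(k)}(\bar L)$ a wedge of $k$-spheres. Nontriviality then comes from an open interval $(x,y)$ with $r(y)-r(x)=k+2$: its order complex is a $k$-dimensional wedge of spheres with $\mu(x,y)\neq 0$, so it carries a nonzero $k$-cycle. That cycle cannot bound in the $k$-dimensional complex $\varDelta^{(k)}(\bar L)$. Your Euler-characteristic formula for $\beta_k$ is correct, but the argument never uses it.

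The genuine gap is the case $k=r(L)-2$ when $L$ has a top element $\hat 1$, i.e.\ $L$ is a geometric lattice (for instance $\varDelta^{(3)}(\bar{\mathcal B}_5)$). Your proposed patch takes $x$ an atom and $y$ of rank $k+2$, which gives $r(y)-r(x)=k+1$. Then $\varDelta(x,y)$ is only $(k-1)$-dimensional and supplies no $k$-cycle. ``Adjusting the choice of interval'' cannot rescue this, since no element of rank $k+3$ exists. As written, this case is not proved.

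The repair is to drop the restriction ``$y$ not the top element'', which was never needed. An open interval $(x,y)$ never contains $\hat 0$ or $\hat 1$, so $(x,y)\subseteq\bar L$ for every $x<y$ in $L$. In the problematic case take $x=\hat 0$ and $y=\hat 1$. Then $(\hat 0,\hat 1)=\bar L$ has rank $n-2=k$, so $\varDelta^{(k)}(\bar L)=\varDelta(\bar L)=\varDelta(\hat 0,\hat 1)$. This is a wedge of $|\mu(\hat 0,\hat 1)|>0$ spheres of dimension $k$ by Proposition~\ref{Mobius}. With this change, the hypothesis $k\le r(L)-2$ is used only to guarantee that an element of rank $k+2$ exists, exactly as in the paper's proof, which picks an arbitrary pair $x\le y$ with rank difference $k+2$.
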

\begin{proof}
For $k=0$, it is trivial since $L$ has at leasy 2 atoms.\\
Suppose $r(L)>2$.
For $1\leq k \leq r(L)-2$, choose a pair of elements $x,y \in L$ such that 
$x\leq y$ and $r(y)-r(x)=k+2$,
then $\varDelta(x,y)$ is a nontrivial $k$-dimensional cycle in $\varDelta^{(k)}(\bar{L})$ by the same argument as Theorem \ref{geometric semilattice}.
\end{proof}

The next corollary is a generalization of a result such that the Falkman complex of a rank $n$ hyperplane arrangement is homotopy equivalent to a nontrivial wedge of $(n-2)$ spheres (see, \cite[Theorem 4.109]{OT92}).

\begin{cor}
Let $\mathcal{A}$ be a hyperplane arrangement of rank $n$, and $L(\mathcal{A})$ be its intersection poset. Then both $L(\mathcal{A})$ and $\bar{L}(\mathcal{A})$ are diagonal, 
and for $0\leq k \leq n-1$, $\varDelta^{(k)}(L(\mathcal{A}))$ and $\varDelta^{(k)}(\bar{L}(\mathcal{A}))$ are both homotopy equivalent to 
nontrivial wedge of $k$-spheres.
\end{cor}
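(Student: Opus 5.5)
The plan is to reduce everything to results already in hand: Proposition \ref{arrangement}, Theorem \ref{geometric semilattice} and Corollary \ref{proper part}.

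\emph{Step 1: the poset $L(\mathcal{A})$ itself.} By Proposition \ref{arrangement}, $L(\mathcal{A})$ is a geometric semilattice. Since $\mathcal{A}$ has rank $n$, its rank as a graded poset is $n$. Theorem \ref{geometric semilattice} then gives two facts at once. First, $L(\mathcal{A})$ is diagonal. Second, $\varDelta^{(k)}(L(\mathcal{A}))$ is homotopy equivalent to a nontrivial wedge of $k$-spheres for $0\le k\le n-1$.

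\emph{Step 2: the proper part $\bar{L}(\mathcal{A})$.} If $\mathcal{A}$ is central, the intersection poset has a top element $\bigcap H_i$. In that case $L(\mathcal{A})$ is a geometric lattice and $\bar L(\mathcal{A})$ removes $\hat 0=\mathbb{K}^d$ and $\hat 1$. If $\mathcal{A}$ is not central, $\bar L$ removes only $\hat 0$. I plan to handle the two cases separately.

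\emph{Diagonality of $\bar L(\mathcal{A})$.} By Proposition \ref{decomposition}, the magnitude homology of a graded poset decomposes over open intervals $(x,y)$ with $x,y$ in the poset. The open intervals of $\bar L$ are open intervals of $L$. Each such interval has the homotopy type of a wedge of $(r(y)-r(x)-2)$-spheres, because the closed interval $[x,y]$ is a geometric lattice and hence shellable (Theorem \ref{semilattice}). Hence only the diagonal terms survive. The rank function of $\bar L$ is the shift $r-1$, which leaves differences unchanged, so diagonality transfers. Alternatively, Theorem \ref{semilattice} with the remark on proper parts shows $\bar L$ is shellable, and Corollary \ref{cor:shellable} gives diagonality directly.

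\emph{Wedge-of-spheres claim for $\bar L(\mathcal{A})$.} Shellability of $\bar L$, together with Theorem \ref{shellable} and Corollary \ref{cor:shellable}, shows $\varDelta^{(k)}(\bar L)$ is a wedge of $k$-spheres. Nontriviality follows from Corollary \ref{proper part} in the range it covers. In the non-central case, $\bar L = L\setminus\{\hat 0\}$ has rank $n-1$. I would realize $\bar L$ as a union of upper intervals $L_{\ge a}$ over atoms $a$, each a geometric semilattice by Proposition \ref{upper ideal}, and reuse the argument of Theorem \ref{geometric semilattice}.

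\emph{Main obstacle: the index range.} The intended range $0\le k\le n-1$ for $\bar L$ exceeds the rank of $\bar L$ (at most $n-2$ in the central case). The statement should be read with $k$ up to $r(\bar L)$. The top case $k=r(\bar L)$ is then $\varDelta(\bar L)$ itself. It is a nontrivial wedge of spheres because $\mu(\hat 0,\hat 1)\ne 0$ by Proposition \ref{Mobius} and Proposition \ref{Mobius Euler}, or, in the non-central case, by Proposition \ref{Mobius} applied after adjoining a top element. I expect verifying nontriviality at these boundary values of $k$ to be the delicate point, and I would settle it with the Möbius function nonvanishing in Proposition \ref{Mobius}.
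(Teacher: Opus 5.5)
Your treatment of $L(\mathcal{A})$ matches the paper's (Proposition~\ref{arrangement} plus Theorem~\ref{geometric semilattice}), and so does your use of Corollary~\ref{proper part} for $\bar L(\mathcal{A})$. Your diagonality argument for $\bar L(\mathcal{A})$ is also correct: its magnitude homology is the summand of that of $L(\mathcal{A})$ indexed by intervals inside $\bar L$, and each open interval is a wedge of $(r(y)-r(x)-2)$-spheres. It fills in a point the paper leaves implicit.

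The step that fails is your top case for a non-central arrangement. There $\bar L=L\setminus\{\hat 0\}$ has rank $n-1$, and you assert that $\varDelta^{(n-1)}(\bar L)=\varDelta(\bar L)$ is a nontrivial wedge of $(n-1)$-spheres ``by Proposition~\ref{Mobius} applied after adjoining a top element''. That proposition is not available, because $L\cup\{\hat 1\}$ is in general not geometric. If $H,H'$ are parallel, then $H\vee H'=\hat 1$ has rank $n+1$, so $r(H\vee H')+r(H\wedge H')=n+1>2=r(H)+r(H')$ for $n\ge 2$, and semimodularity fails. Moreover $\mu_{L\cup\{\hat 1\}}(\hat 0,\hat 1)=-\sum_{x\in L}\mu(\hat 0,x)$ can vanish.

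Concretely, take the lines $x=0$, $x=1$, $y=0$ in $\mathbb{R}^2$, so $n=2$. Then $\sum_{x\in L}\mu(\hat 0,x)=1-3+2=0$. The poset $\bar L$ consists of the three lines and the points $(0,0),(1,0)$. Its Hasse diagram is the path $\{x=0\}\prec(0,0)\succ\{y=0\}\prec(1,0)\succ\{x=1\}$, so $\varDelta^{(1)}(\bar L)=\varDelta(\bar L)$ is contractible. This case therefore cannot be proved: the claim is false at $k=n-1$ for non-central $\mathcal{A}$. For central $\mathcal{A}$, as you noticed, the index $k=n-1$ exceeds $r(\bar L)$.

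What is provable is the claim for $\bar L(\mathcal{A})$ in the range $0\le k\le n-2$, with $n\ge 2$. That is also all the paper's one-line proof delivers, since Corollary~\ref{proper part} only covers $k\le r(L)-2$. For this range you need neither the central/non-central split nor the union-of-upper-intervals construction:
\begin{itemize}
\item $\bar L$ is shellable: in the non-central case $\varDelta(L)$ is a cone over $\varDelta(\bar L)$, and in the central case $\bar L$ is the proper part of a geometric lattice. Corollary~\ref{cor:shellable} then gives a wedge of $k$-spheres.
\item For $1\le k\le n-2$, pick $x<y$ in $L$ with $r(y)-r(x)=k+2$. Then $\varDelta(x,y)\subseteq\varDelta^{(k)}(\bar L)$ is a $k$-dimensional subcomplex with $\tilde H_k(\varDelta(x,y))\neq 0$ by Proposition~\ref{Mobius}. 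Since $\varDelta^{(k)}(\bar L)$ has dimension $k$, this class survives there.
\end{itemize}
This argument works whether or not $\mathcal{A}$ is central. In the central case your separate top case $k=r(\bar L)=n-2$ already lies in this range, so that extra argument is redundant. In the non-central case the index $k=n-1$ must be dropped from the statement rather than proved.
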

\begin{proof}
Since $L(\mathcal{A})$ is a geometric semilattice by Proposition \ref{arrangement}, 
the statement follows from Theorem \ref{geometric semilattice} and Corollary \ref{proper part}.
\end{proof}

\begin{ex}
  Let $\mathcal{B}_n$ be a Boolean lattice on $\{0,\ldots,n\}$, and $\varPi_n$ be a partition lattice on $\{1,\ldots,n\}$.
  For $\bar{\mathcal{B}_5}$ and $\varPi_4$, 
  the homotopy type of the filtered order complexes of these posets are as follows:
  \begin{align}
  \varDelta^{(3)}(\bar{\mathcal{B}}_5) &\simeq \mathbb{S}^3,
&\quad \varDelta^{(2)}(\bar{\mathcal{B}}_5) &\simeq \bigvee^{19} \mathbb{S}^{2},
&\quad \varDelta^{(1)}(\bar{\mathcal{B}}_5) &\simeq \bigvee^{41} \mathbb{S}^{1},\\
\varDelta^{(3)}(\varPi_4) &\simeq \ast,
&\quad \varDelta^{(2)}(\varPi_4)  &\simeq \bigvee^{6} \mathbb{S}^2,
&\quad H_{1}(\varDelta^{(1)}(\varPi_4)) &\simeq \bigvee^{17} \mathbb{S}^1.
  \end{align}
  Magnitude homologies of these posets are as follows:
  \begin{equation}
MH^{k}_{i}(\bar{\mathcal{B}_5}) \cong \begin{cases}
\mathbb{Z}^{20} & (k=i=3)\\
\mathbb{Z}^{60} & (k=i=2)\\
\mathbb{Z}^{70} & (k=i=1)\\
\mathbb{Z}^{30} & (k=i=0)\\
0 & (\mathrm{otherwise}),
\end{cases}
\qquad
MH^{k}_{i}(\varPi_4) \cong \begin{cases}
\mathbb{Z}^{6} & (k=i=3)\\
\mathbb{Z}^{23} & (k=i=2)\\
\mathbb{Z}^{31} & (k=i=1)\\
\mathbb{Z}^{15} & (k=i=0)\\
0 & (\mathrm{otherwise}).
\end{cases}
\end{equation}

\begin{figure}[htbp]
\begin{minipage}{0.95\linewidth}
    \centering
  \begin{tikzpicture}[
  scale=0.7,
  every node/.style={circle,fill=black,inner sep=1.2pt,draw=black}
]
%\centering
%----------------------------------------
% 左：proper part of B_5
%----------------------------------------
\begin{scope}
  % nodes
  \node (b1_1) at (-2.00,4.20) {}; % (1)
  \node (b1_2) at (-1.00,4.20) {}; % (2)
  \node (b1_3) at (0.00,4.20)  {}; % (3)
  \node (b1_4) at (1.00,4.20)  {}; % (4)
  \node (b1_5) at (2.00,4.20)  {}; % (5)

  \node (b2_1)  at (-4.50,2.80) {}; % (1,2)
  \node (b2_2)  at (-3.50,2.80) {}; % (1,3)
  \node (b2_3)  at (-2.50,2.80) {}; % (1,4)
  \node (b2_4)  at (-1.50,2.80) {}; % (1,5)
  \node (b2_5)  at (-0.50,2.80) {}; % (2,3)
  \node (b2_6)  at (0.50,2.80)  {}; % (2,4)
  \node (b2_7)  at (1.50,2.80)  {}; % (2,5)
  \node (b2_8)  at (2.50,2.80)  {}; % (3,4)
  \node (b2_9)  at (3.50,2.80)  {}; % (3,5)
  \node (b2_10) at (4.50,2.80)  {}; % (4,5)

  \node (b3_1)  at (-4.50,1.40) {}; % (1,2,3)
  \node (b3_2)  at (-3.50,1.40) {}; % (1,2,4)
  \node (b3_3)  at (-2.50,1.40) {}; % (1,2,5)
  \node (b3_4)  at (-1.50,1.40) {}; % (1,3,4)
  \node (b3_5)  at (-0.50,1.40) {}; % (1,3,5)
  \node (b3_6)  at (0.50,1.40)  {}; % (1,4,5)
  \node (b3_7)  at (1.50,1.40)  {}; % (2,3,4)
  \node (b3_8)  at (2.50,1.40)  {}; % (2,3,5)
  \node (b3_9)  at (3.50,1.40)  {}; % (2,4,5)
  \node (b3_10) at (4.50,1.40)  {}; % (3,4,5)

  \node (b4_1) at (-2.00,0.00) {}; % (1,2,3,4)
  \node (b4_2) at (-1.00,0.00) {}; % (1,2,3,5)
  \node (b4_3) at (0.00,0.00)  {}; % (1,2,4,5)
  \node (b4_4) at (1.00,0.00)  {}; % (1,3,4,5)
  \node (b4_5) at (2.00,0.00)  {}; % (2,3,4,5)

  % edges
  \draw (b1_1) -- (b2_1);
  \draw (b1_1) -- (b2_2);
  \draw (b1_1) -- (b2_3);
  \draw (b1_1) -- (b2_4);

  \draw (b1_2) -- (b2_1);
  \draw (b1_2) -- (b2_5);
  \draw (b1_2) -- (b2_6);
  \draw (b1_2) -- (b2_7);

  \draw (b1_3) -- (b2_2);
  \draw (b1_3) -- (b2_5);
  \draw (b1_3) -- (b2_8);
  \draw (b1_3) -- (b2_9);

  \draw (b1_4) -- (b2_3);
  \draw (b1_4) -- (b2_6);
  \draw (b1_4) -- (b2_8);
  \draw (b1_4) -- (b2_10);

  \draw (b1_5) -- (b2_4);
  \draw (b1_5) -- (b2_7);
  \draw (b1_5) -- (b2_9);
  \draw (b1_5) -- (b2_10);

  \draw (b2_1) -- (b3_1);
  \draw (b2_1) -- (b3_2);
  \draw (b2_1) -- (b3_3);

  \draw (b2_2) -- (b3_1);
  \draw (b2_2) -- (b3_4);
  \draw (b2_2) -- (b3_5);

  \draw (b2_3) -- (b3_2);
  \draw (b2_3) -- (b3_4);
  \draw (b2_3) -- (b3_6);

  \draw (b2_4) -- (b3_3);
  \draw (b2_4) -- (b3_5);
  \draw (b2_4) -- (b3_6);

  \draw (b2_5) -- (b3_1);
  \draw (b2_5) -- (b3_7);
  \draw (b2_5) -- (b3_8);

  \draw (b2_6) -- (b3_2);
  \draw (b2_6) -- (b3_7);
  \draw (b2_6) -- (b3_9);

  \draw (b2_7) -- (b3_3);
  \draw (b2_7) -- (b3_8);
  \draw (b2_7) -- (b3_9);

  \draw (b2_8) -- (b3_4);
  \draw (b2_8) -- (b3_7);
  \draw (b2_8) -- (b3_10);

  \draw (b2_9) -- (b3_5);
  \draw (b2_9) -- (b3_8);
  \draw (b2_9) -- (b3_10);

  \draw (b2_10) -- (b3_6);
  \draw (b2_10) -- (b3_9);
  \draw (b2_10) -- (b3_10);

  \draw (b3_1) -- (b4_1);
  \draw (b3_1) -- (b4_2);

  \draw (b3_2) -- (b4_1);
  \draw (b3_2) -- (b4_3);

  \draw (b3_3) -- (b4_2);
  \draw (b3_3) -- (b4_3);

  \draw (b3_4) -- (b4_1);
  \draw (b3_4) -- (b4_4);

  \draw (b3_5) -- (b4_2);
  \draw (b3_5) -- (b4_4);

  \draw (b3_6) -- (b4_3);
  \draw (b3_6) -- (b4_4);

  \draw (b3_7) -- (b4_1);
  \draw (b3_7) -- (b4_5);

  \draw (b3_8) -- (b4_2);
  \draw (b3_8) -- (b4_5);

  \draw (b3_9) -- (b4_3);
  \draw (b3_9) -- (b4_5);

  \draw (b3_10) -- (b4_4);
  \draw (b3_10) -- (b4_5);

  % label
  \node[draw=none,fill=none,inner sep=0pt] at (0,-1) {$\bar{\mathcal{B}_5}=\mathcal{B}_5 \setminus \{\emptyset,[5]\}$};
\end{scope}

%----------------------------------------
% 右：partition lattice Π_4
%----------------------------------------
\begin{scope}[xshift=12cm]
  % nodes
  \node (p4_1) at (0.00,0.00) {}; % {{1},{2},{3},{4}}

  \node (p3_1) at (-2.50,1.80) {}; % {1},{2},{3,4}
  \node (p3_2) at (-1.50,1.80) {}; % {1},{4},{2,3}
  \node (p3_3) at (-0.50,1.80) {}; % {1},{3},{2,4}
  \node (p3_4) at (0.50,1.80)  {}; % {3},{4},{1,2}
  \node (p3_5) at (1.50,1.80)  {}; % {2},{4},{1,3}
  \node (p3_6) at (2.50,1.80)  {}; % {2},{3},{1,4}

  \node (p2_1) at (-3.00,3.60) {}; % {1},{2,3,4}
  \node (p2_2) at (-2.00,3.60) {}; % {1,2},{3,4}
  \node (p2_3) at (-1.00,3.60) {}; % {4},{1,2,3}
  \node (p2_4) at (0.00,3.60)  {}; % {3},{1,2,4}
  \node (p2_5) at (1.00,3.60)  {}; % {1,3},{2,4}
  \node (p2_6) at (2.00,3.60)  {}; % {2},{1,3,4}
  \node (p2_7) at (3.00,3.60)  {}; % {1,4},{2,3}

  \node (p1_1) at (0.00,5.40) {}; % {1,2,3,4}

  % edges
  \draw (p4_1) -- (p3_1);
  \draw (p4_1) -- (p3_2);
  \draw (p4_1) -- (p3_3);
  \draw (p4_1) -- (p3_4);
  \draw (p4_1) -- (p3_5);
  \draw (p4_1) -- (p3_6);

  \draw (p3_1) -- (p2_1);
  \draw (p3_1) -- (p2_2);
  \draw (p3_1) -- (p2_6);

  \draw (p3_2) -- (p2_1);
  \draw (p3_2) -- (p2_3);
  \draw (p3_2) -- (p2_7);

  \draw (p3_3) -- (p2_1);
  \draw (p3_3) -- (p2_4);
  \draw (p3_3) -- (p2_5);

  \draw (p3_4) -- (p2_2);
  \draw (p3_4) -- (p2_3);
  \draw (p3_4) -- (p2_4);

  \draw (p3_5) -- (p2_3);
  \draw (p3_5) -- (p2_5);
  \draw (p3_5) -- (p2_6);

  \draw (p3_6) -- (p2_4);
  \draw (p3_6) -- (p2_6);
  \draw (p3_6) -- (p2_7);

  \draw (p2_1) -- (p1_1);
  \draw (p2_2) -- (p1_1);
  \draw (p2_3) -- (p1_1);
  \draw (p2_4) -- (p1_1);
  \draw (p2_5) -- (p1_1);
  \draw (p2_6) -- (p1_1);
  \draw (p2_7) -- (p1_1);

  % label
  \node[draw=none,fill=none,inner sep=0pt] at (0,-1) {$\varPi_4$};
\end{scope}
\end{tikzpicture}
\end{minipage}
\end{figure}
\end{ex}

\subsection{Magnitude homologies of the intersection semilattices of complex hyperplane arrangements}
The following theorem is the well-known fact about the relation between the topology of the complement of a complex hyperplane arrangement 
and its intersection semilattice.
\begin{thm}[\cite{OT92}, Theorem 5.93]
Let $\mathcal{A}$ be a hyperplane arrangement 
on $\mathbb{C}^n$.
The integral homology of the complement $M_{\mathcal{A}}:=\mathbb{C}^n\setminus \bigcup\mathcal{A}$ of $\mathcal{A}$ 
is torsion-free, and its betti number has a formula as follows:
\begin{equation}\label{betti}
\beta_{k}(M_{\mathcal{A}})=\sum_{x\in L(\mathcal{A}),\ r(x)=k}|\mu(\hat{0},x)|.
\end{equation}
\end{thm}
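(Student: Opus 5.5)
This is the Orlik--Solomon/Goresky--MacPherson theorem, cited from \cite{OT92}. If I were to prove it rather than cite it, I would go through the Orlik--Solomon algebra. The plan has three steps.

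First, I would identify $H^{\ast}(M_{\mathcal{A}};\mathbb{Z})$ with the Orlik--Solomon algebra $\OS(\mathcal{A})$. This is the exterior algebra on generators $e_H$ ($H\in\mathcal{A}$) modulo the ideal generated by the boundaries $\partial e_S$ of dependent sets $S$, together with the monomials $e_S$ for which $\bigcap S=\emptyset$. The isomorphism $\OS(\mathcal{A})\to H^{\ast}(M_{\mathcal{A}})$ sends $e_H\mapsto [\frac{1}{2\pi i}d\log\alpha_H]$. Two facts are needed:
\begin{itemize}
\item The map is well defined: the relations hold, by the Arnold--Brieskorn identity among logarithmic forms.
\item The map is bijective. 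I would prove this by deletion--restriction on a distinguished hyperplane $H_0$. For the triple $(\mathcal{A},\mathcal{A}'=\mathcal{A}\setminus\{H_0\},\mathcal{A}''=\mathcal{A}^{H_0})$, the Gysin/Thom long exact sequence of the pair $(M_{\mathcal{A}'},M_{\mathcal{A}'}\cap H_0)$ splits into short exact sequences
\[
0\to H^{k}(M_{\mathcal{A}'})\to H^{k}(M_{\mathcal{A}})\to H^{k-1}(M_{\mathcal{A}''})\to 0 .
\]
The algebra satisfies the same short exact sequence $0\to \OS(\mathcal{A}')\to \OS(\mathcal{A})\to \OS(\mathcal{A}'')\to 0$. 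The five lemma and induction on $|\mathcal{A}|$ then give the isomorphism and torsion-freeness, since each $\OS$ is free.
\end{itemize}

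Second, I would compute the ranks combinatorially. $\OS(\mathcal{A})$ decomposes as $\bigoplus_{x\in L(\mathcal{A})}\OS_x$, graded by the intersection $\bigcap S$. The no-broken-circuit monomials form a $\mathbb{Z}$-basis, which gives $\rk \OS_x=$ the number of nbc sets with intersection $x$. By Rota's crosscut / nbc theorem on the geometric lattice $[\hat{0},x]$ (Proposition \ref{Mobius} gives the sign), this number equals $(-1)^{r(x)}\mu(\hat{0},x)=|\mu(\hat{0},x)|$. Summing over $r(x)=k$ yields \eqref{betti}.

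Third, the deletion--restriction identity for the characteristic polynomial, $\chi_{\mathcal{A}}=\chi_{\mathcal{A}'}-\chi_{\mathcal{A}''}$, gives an independent check of the counting. The Poincaré polynomials then satisfy $\pi(\mathcal{A},t)=\pi(\mathcal{A}',t)+t\,\pi(\mathcal{A}'',t)$, matching the short exact sequences above.

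The main obstacle is the first step: showing that the relations are complete and that the restriction map on $\OS$ is compatible with the geometric Gysin residue map. I would handle this by checking on generators that the residue along $H_0$ sends $e_{H_0}e_S\mapsto e_{S''}$, and proving surjectivity and injectivity via the nbc basis. For the paper's purposes I would simply cite \cite[Theorem 5.93]{OT92}.
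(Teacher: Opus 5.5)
Your outline is correct and is the standard Orlik--Solomon deletion--restriction argument behind \cite[Theorem 5.93]{OT92}; the paper simply cites that result without proof, so you are following the same route. Two small points: the splitting of the Gysin sequence is not an independent input but a consequence of the compatibility square, the surjectivity of $\OS(\mathcal{A})\to\OS(\mathcal{A}'')$ and the induction hypothesis for $\mathcal{A}''$, which together make the residue map onto and the connecting map zero; and torsion-freeness of integral homology, as stated, follows from freeness of the cohomology by the universal coefficient theorem.
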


The following theorem holds for the magnitude homology of the intersection lattice of a complex arrangement and the topology of the complement of restricted arrangements.
\begin{thm}\label{complement}
In the same settings above, the next equation holds for the degree of the diagonal components of the magnitude homology of $L(\mathcal{A})$:
\begin{equation}\label{betti sum}
\rk\MH^{k}_{k}(L(\mathcal{A}))=\sum_{X\in L(\mathcal{A})}\beta_{k}(M(\mathcal{A}^{X})).
\end{equation}
Here, $\mathcal{A}^{X}:=\{X\cap H \mid X\nsubseteq H,\ X\cap H=\emptyset \}$.
\end{thm}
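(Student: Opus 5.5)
We will compare both sides of \eqref{betti sum} as sums over the lower endpoint $X$. Start from Corollary~\ref{diagonal}: since $L(\mathcal{A})$ is a geometric semilattice (Proposition~\ref{arrangement}), it is diagonal by Theorem~\ref{geometric semilattice}. Hence
\begin{equation*}
\rk \MH^{k}_{k}(L(\mathcal{A}))=\sum_{\substack{X\leq Y\\ r(Y)-r(X)=k}}|\mu(X,Y)|
=\sum_{X\in L(\mathcal{A})}\ \sum_{\substack{Y\geq X\\ r(Y)-r(X)=k}}|\mu(X,Y)|.
\end{equation*}
It then suffices to show, for each fixed $X$, that the inner sum equals $\beta_k(M(\mathcal{A}^X))$.

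We will identify the upper interval $L(\mathcal{A})_{\geq X}$ with the intersection poset $L(\mathcal{A}^X)$ of the restriction. Here $\mathcal{A}^X$ is read as the arrangement $\{X\cap H\mid H\in\mathcal{A},\ X\nsubseteq H,\ X\cap H\neq\emptyset\}$ in the affine space $X\cong\mathbb{C}^{\dim X}$. The elements of $L(\mathcal{A}^X)$ are the nonempty intersections of such hyperplanes, together with $X$ itself. These are exactly the nonempty intersections of members of $\mathcal{A}$ that are contained in $X$, i.e.\ the elements $Y\in L(\mathcal{A})$ with $Y\geq X$ under reverse inclusion. The orders agree, both being reverse inclusion, and $X$ is the minimum $\hat{0}$ of $L(\mathcal{A}^X)$.

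Ranks also agree up to the shift $r_{\mathcal{A}^X}(Y)=r(Y)-r(X)$, since both rank functions are given by codimension. The M\"obius function of an interval is intrinsic to that interval, so $\mu_{L(\mathcal{A}^X)}(\hat{0},Y)=\mu_{L(\mathcal{A})}(X,Y)$. Applying \eqref{betti} to the complex arrangement $\mathcal{A}^X$ in $X$ then gives
\begin{equation*}
\beta_k(M(\mathcal{A}^X))=\sum_{\substack{Y\in L(\mathcal{A}^X)\\ r_{\mathcal{A}^X}(Y)=k}}|\mu(\hat{0},Y)|=\sum_{\substack{Y\geq X\\ r(Y)-r(X)=k}}|\mu(X,Y)|.
\end{equation*}
Summing over $X$ finishes the proof.

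The main obstacle is the identification $L(\mathcal{A})_{\geq X}\cong L(\mathcal{A}^X)$ with matching ranks. This needs some care because $\mathcal{A}$ is possibly affine and the paper's definition of $\mathcal{A}^X$ contains a misprint ($X\cap H=\emptyset$ should read $\neq\emptyset$). One must also check that distinct hyperplanes $H$ may give the same $X\cap H$, which is harmless since only the set of subspaces enters the intersection poset. Finally, \eqref{betti} must be used for affine, not just central, arrangements; \cite[Theorem 5.93]{OT92} covers that case.
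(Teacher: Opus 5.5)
Your proof is correct and follows the paper's argument. Diagonality together with Corollary~\ref{diagonal} writes $\rk\MH^{k}_{k}(L(\mathcal{A}))$ as a sum of $|\mu(X,Y)|$, and regrouping this sum by the lower endpoint $X$ and applying \eqref{betti} to the restriction $\mathcal{A}^{X}$ gives the right-hand side. Your explicit identification $L(\mathcal{A})_{\geq X}\cong L(\mathcal{A}^{X})$ with the rank shift, and your reading of the definition as $X\cap H\neq\emptyset$, fill in details the paper leaves implicit.
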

\begin{proof}
By \eqref{betti}, the betti numbers of the complement of $\mathcal{A}^{X}$ in $\mathbb{C}^{\mathrm{dim}X}$
are as follows:
\begin{equation}
b_k(M_{\mathcal{A}^X})=\sum_{\substack{Y\in L(\mathcal{A}),\ Y\geq X\\ r(Y)-r(X)=k}}|\mu(X,Y)|
\end{equation}
Therefore, by the diagonality of the magnitude homology of $L(\mathcal{A})$,
Corollary \ref{Mobius} yields \eqref{betti sum}.
\end{proof}

\begin{rmk}
Let $L$ be a geometric lattice, and $\OS^{\ast}(L)$ be its Orlik-Solomon algebra.
It is known that the degree of $\OS^{k}(L)$ is equal to $\sum_{x\in L,\ r(x)=k}|\mu(\hat{0},x)|$ (see \cite[Corollary 7.10.3]{White92}) 
hence by the same argument as Theorem \ref{complement}, we have
\begin{equation}
\rk \MH^{k}_{k}(L)=\sum_{X\in L}\rk \OS^{k}(L_{\leq X}).
\end{equation} 
Combining with Theorem \ref{geometric semilattice}, we get $\sum_{X\in L}\rk \OS^{n-1}(L_{\leq X})\leq \sum_{X\in L}\rk \OS^{n}(L_{\leq X})$.
It would be interesting to understand whether this inequality is related to the unimodality of the coefficients of the characteristic polynomial of a matroid (see, \cite{AHK18}).
\end{rmk}

\section*{Acknowledgements}
I would like to thank Masahiko Yoshinaga for his constant guidance and discussions.
I also thank Yasuhiko Asao, Yosuke Kusano (he conjectured Corollary \ref{closed manifold2}), Ye Liu, Takuya Saito and my former supervisor Shigeru Takamura for their valuable advice and comments.

\end{document}